\newcommand{\beq}{\begin{equation}}
\newcommand{\eeq}{\end{equation}}
\newcommand{\cc}{\mathbf C}
\newcommand{\res}
{\mathop{\hbox{\vrule height 7pt width .5pt depth 0pt
\vrule height .5pt width 5pt depth 0pt}}\nolimits}
\newcommand{\wid}{\mathrm{width}}
\def\R{\mathbb R}
\def\N{\mathbb N}
\def\cal{\mathcal}
\def\H{{\cal H}}
\def\de{\delta}
\def\e{\varepsilon}
\def\Chi#1{\hbox{{\large $\chi$}{\Large $_{_{#1}}$}}}
\newcommand{\medint}{-\kern -,375cm\int}
\newcommand{\medintinrigo}{-\kern -,315cm\int}
\renewcommand{\S}{\mathbb{S}}
\def\Om{\Omega}
\newcommand{\wto}{\rightharpoonup}
\def\pa{\partial}
\def\Div{{\rm div}}
\newcommand{\dist}{{\rm dist}}
\providecommand{\U}[1]{\protect\rule{.1in}{.1in}}
\newcommand{\wtos}{\stackrel{*}{\rightharpoonup}}
\numberwithin{equation}{section}
\newtheorem{theorem}{Theorem}
\newtheorem{corollary}[theorem]{Corollary}
\newtheorem{definition}[theorem]{Definition}
\newtheorem{lemma}[theorem]{Lemma}
\newtheorem{proposition}[theorem]{Proposition}
\newtheorem{remark}[theorem]{Remark}
\numberwithin{theorem}{section}
\begin{document}

\title[]{Total positive curvature and the equality case in the relative isoperimetric inequality outside convex domains}
\author{N. Fusco, M. Morini}
\address[N.\ Fusco]{Dipartimento di Matematica e Applicazioni "R. Caccioppoli",
Universit\`{a} degli Studi di Napoli "Federico II" , Napoli, Italy}
\email[Nicola Fusco]{n.fusco@unina.it}
\address[M.\ Morini]{Dipartimento di Scienze Matematiche Fisiche e Informatiche, Universit\`{a} degli Studi di Parma, Parma, Italy}
\email[Massimiliano Morini]{massimiliano.morini@unipr.it}
\maketitle

%
%
%
%

\begin{abstract} We  settle the case of equality for the relative isoperimetric inequality outside any arbitrary convex set with not empty interior.
\end{abstract}

\section{Introduction}
In \cite{cgr} Choe, Ghomi and Ritor\'e proved the following relative isoperimetric inequality outside convex sets, see also \cite{LWW} for an alternative proof and \cite{K} for a generalization to higher codimension.
\begin{theorem}[\cite{cgr}]\label{th:isoperim}
Let $\cc\subset\R^N$ be a closed convex set with nonempty interior. For any set of finite perimeter $\Om\subset\R^N\setminus\cc$ we have
\beq\label{isoperim1}
P(\Om;\R^N\setminus\cc)\geq N\Big(\frac{\omega_N}{2}\Big)^{\frac{1}{N}}|\Om|^{\frac{N-1}{N}}\,.
\eeq
Moreover, if $\cc$ has a $C^2$ boundary and $\Om$ is a bounded set for which the equality in \eqref{isoperim1}  holds, then $\Om$ is a half ball.
\end{theorem}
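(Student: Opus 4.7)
The plan is to retrace the proof of \eqref{isoperim1} as a chain of sharp inequalities built around the total positive curvature of the free boundary $\Sigma := \partial^*\Omega \cap (\R^N\setminus\cc)$, and then identify the rigidity forced on $\Omega$ when each inequality in the chain is saturated. Throughout, write $\Gamma := \partial\Omega\cap\partial\cc$ for the contact set.

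First, since a set $\Omega$ realising equality in \eqref{isoperim1} is in particular a bounded minimizer of the relative perimeter at fixed volume outside the $C^2$ obstacle $\cc$, classical regularity theory for $\omega$-minimizers of the perimeter, combined with boundary regularity against a smooth obstacle, yields that $\Sigma$ is smooth outside a closed singular set of Hausdorff dimension at most $N-8$, that $\Sigma$ has constant mean curvature $H_0>0$, and that $\Sigma$ meets $\partial\cc$ orthogonally along $\Gamma$ (natural boundary condition from the first variation).

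Second, I would deploy the hemispherical covering / total positive curvature mechanism underlying \eqref{isoperim1}: by exploiting the convexity of $\cc$ together with the orthogonality at $\Gamma$ (morally, by reflecting $\Sigma$ across the tangent hyperplanes of $\partial\cc$ along $\Gamma$), one shows that the outer Gauss map $\nu:\Sigma\to S^{N-1}$ covers, at points where all principal curvatures are nonnegative, an open hemisphere of $H^{N-1}$-measure exactly $N\omega_N/2$. The area formula then delivers
\[
\int_{\Sigma^+} G_K \, dH^{N-1} \ \geq\ \frac{N\omega_N}{2},
\]
where $\Sigma^+ := \{\kappa_1,\dots,\kappa_{N-1}\ge 0\}$ and $G_K=\prod_i\kappa_i$ is the Gauss--Kronecker curvature. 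Chaining this with the arithmetic--geometric mean inequality $G_K\le (H_0/(N-1))^{N-1}$ on $\Sigma^+$, the constancy of $H_0$, a Minkowski-type divergence identity relating $H_0|\Omega|$ to $P(\Omega;\R^N\setminus\cc)$, and H\"older's inequality, one recovers \eqref{isoperim1}.

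Third, equality in \eqref{isoperim1} forces every link of the chain to saturate: (i) AM--GM equality makes $\Sigma^+$ totally umbilical, hence contained in a round sphere of radius $(N-1)/H_0$; (ii) H\"older saturation forces $\Sigma=\Sigma^+$, so the whole free boundary lies on that sphere; (iii) the Gauss-map inequality being sharp forces $\nu$ to be injective and to cover the hemisphere exactly once, so $\Sigma$ is an open round hemisphere; (iv) the orthogonality at $\Gamma$ then identifies the tangent hyperplane to $\partial\cc$ along $\Gamma$ with the equatorial hyperplane of that sphere, and $\Omega$ is recovered as the closed half-ball bounded by $\Sigma$ and the equatorial disk. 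The principal obstacle is the second step, specifically the rigorous identification of an exact hemisphere in the image of $\nu$: one must exploit $C^2$ regularity of $\partial\cc$ quantitatively (quadratic recession of $\cc$ from its tangent planes near $\Gamma$) to prevent directions from being absorbed by $\Gamma$ or by singularities of $\Sigma$, and one must invoke the codimension-eight bound on the singular set to guarantee it contributes no $H^{N-1}$-mass to the Gauss image. A secondary delicate point is the unique-continuation propagation of umbilicity from $\Sigma^+$ to all of $\Sigma$ in (ii), which is handled via analytic continuation of constant mean curvature hypersurfaces.
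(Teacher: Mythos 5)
This theorem is not proved in the paper: it is quoted from \cite{cgr}, and the paper only sketches, in the introduction (see \eqref{intro1}), the equality-case argument of \cite{cgr} that your third paragraph essentially mirrors (isoperimetric profile, $I'(m)=H_\Sigma$, AM--GM against the total positive curvature bound, then umbilicity). Measured against that known argument, your outline of the rigidity part is on the right track, but the proposal has a genuine gap on the inequality itself. Inequality \eqref{isoperim1} must hold for \emph{every} set of finite perimeter $\Om$ and every closed convex $\cc$ with nonempty interior (possibly nonsmooth and unbounded), whereas your chain of inequalities only applies to a set that is already an isoperimetric minimizer with smooth, constant-mean-curvature free boundary meeting a $C^2$ obstacle orthogonally. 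To deduce \eqref{isoperim1} for arbitrary $\Om$ you must pass through the isoperimetric profile $I(m)$ outside $\cc$, and this forces you to address (a) approximation of a general convex $\cc$ by smooth convex bodies (harmless for the inequality, but it must be said), and above all (b) existence of isoperimetric regions for each volume outside the smooth obstacle, where compactness can fail by escape of mass to infinity; handling this (e.g.\ by showing that escaping mass pays at least the half-space or whole-space cost) is a substantive part of the proof in \cite{cgr} and is entirely absent from your plan. Relatedly, the boundary regularity and orthogonality along $\Gamma$ are not ``classical regularity theory'' off the shelf: they are a capillary-type free-boundary regularity statement that needs the $C^2$ obstacle and is exactly the point that breaks down for nonsmooth $\cc$ (which is why the present paper exists).

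Two further points inside your chain are too vague to check. First, the step ``Minkowski-type divergence identity relating $H_0|\Om|$ to $P(\Om;\R^N\setminus\cc)$, and H\"older'' replaces the profile ODE $I\,(I')^{N-1}\ge (N-1)^{N-1}\tfrac{N\omega_N}{2}$, integrated in $m$, which is how \cite{cgr} (and \eqref{intro1} here) actually closes the argument; you would need to specify the identity and check the sign of the contact-set term produced by the divergence theorem (convexity of $\cc$ and the choice of base point enter here), or else revert to the ODE comparison. Second, in the equality case the saturation is obtained at $m_0$ only after showing $I(m)=I_{\mathscr H}(m)$ for \emph{all} $m\le m_0$ (by integrating the differential inequality and using $I\ge I_{\mathscr H}$), not merely at the single volume where equality is assumed; your step (ii) ``H\"older saturation forces $\Sigma=\Sigma^+$'' needs this, together with the fact that the singular set has zero $\H^{N-1}$-measure, to conclude umbilicity a.e.\ and then propagate it by analyticity of CMC hypersurfaces. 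The total-positive-curvature input itself, which you describe ``morally'' via reflection, is rigorously the support-hyperplane argument of \cite{cgr0}, reproduced and generalized in Theorem~\ref{th:cgr-main1} of this paper; citing it in that form would make your second step precise.
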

Here and in what follows $P(\Om;\R^N\setminus\cc)$ denotes the perimeter of a set $\Om$ in $\R^N\setminus\cc$ in the sense of De Giorgi.
As observed by the authors in \cite{cgr} the equality case for general, possibly nonsmooth, convex sets does not follow from their methods as it cannot be handled by a simple approximation argument. However there are many situations in which nonsmooth convex sets naturally appear. For instance, in models of vapor-liquid-solid-grown nanowires the nanotube is often described as a semi-infinite convex cylinder with sharp edges and possibly  nonsmooth cross sections. In these models super-saturated liquid droplets correspond to isoperimetric regions for the relative perimeter outside the cylinder or more in general  for the capillarity energy, see \cite{KCJANC, OHM}. Experimentally it is observed that in some regimes preferred configurations are given  by spherical caps lying on the  top facet of the cylinder. Understanding these phenomena from a mathematical point of view was our first motivation to study the equality cases in \eqref{isoperim1} also for nonsmooth convex obstacles, beside the intrinsic geometric interest of the problem.

 The main result of this paper reads as follows.
\begin{theorem}[The equality case]\label{th:ugual}
Let $\cc\subset\R^N$ be a closed convex set with nonempty interior and let $\Om\subset\R^N\setminus\cc$ be a set of finite perimeter  such that equality holds in \eqref{isoperim1}. Then $\Om$ is a half ball supported on a facet of $\cc$.
\end{theorem}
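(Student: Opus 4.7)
The plan is to reduce the problem to showing that the contact set $\Sigma:=\overline{\Om}\cap\partial\cc$ lies in a single facet of $\cc$; the half-ball conclusion will then follow from the smooth case of Theorem~\ref{th:isoperim} applied to a supporting half-space.

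As a preliminary step I would prove that an equality case $\Om$ is bounded and indecomposable. Both properties follow from applying \eqref{isoperim1} to a nontrivial splitting of $\Om$ and invoking the strict subadditivity of $t\mapsto t^{(N-1)/N}$: the equality assumption rules out any nontrivial measure-theoretic decomposition and, via an exhaustion by balls, rules out infinite tails.

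For the core step I would compare $\Om$ with a sequence of smooth convex approximations of $\cc$, carrying out a careful accounting of how the relative perimeter changes under smoothing in terms of the \emph{total positive curvature} of $\partial\cc$ in the generalized sense appropriate for convex bodies. More precisely, I would approximate $\cc$ from outside by $C^2$ convex bodies $\cc_n\supset\cc$ with $\cc_n\to\cc$ in Hausdorff distance, and for $\Om_n:=\Om\setminus\cc_n$ use the surface-replacement identity
\[
P(\Om_n;\R^N\setminus\cc_n)=P(\Om;\R^N\setminus\cc)-\mathcal{H}^{N-1}\bigl(\partial^*\Om\cap(\cc_n\setminus\cc)\bigr)+\mathcal{H}^{N-1}(\Om\cap\partial\cc_n).
\]
The essential estimate is an integral-geometric lower bound for the ``extra area'' $\mathcal{H}^{N-1}(\Om\cap\partial\cc_n)$ in terms of the total positive curvature that $\partial\cc_n$ must generate in order to round off the singular set of $\partial\cc$ meeting $\Sigma$. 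Inserting this bound into Theorem~\ref{th:isoperim} for the pair $(\Om_n,\cc_n)$ and passing to the limit, the assumed equality in \eqref{isoperim1} forces this curvature contribution to vanish; equivalently, $\Sigma$ must be disjoint from the singular (edge and vertex) part of $\partial\cc$. Hence $\Sigma$ is contained in the smooth, flat portion of $\partial\cc$, and indecomposability of $\Om$ combined with convexity of $\cc$ promotes this to $\Sigma\subset F$ for a single facet $F$.

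Once $\Sigma\subset F$, let $H\supset F$ be the supporting hyperplane of $\cc$ along $F$ and $H^+$ the closed half-space containing $\cc$. Using the indecomposability of $\Om$ and a slicing argument across $H$ (each side of $\Om$ would otherwise be admissible in \eqref{isoperim1} and the usual strict subadditivity would be violated), one shows that $\Om\subset\overline{H^-}$ up to a null set, and consequently $P(\Om;\R^N\setminus H^+)=P(\Om;\R^N\setminus\cc)$. The smooth case of Theorem~\ref{th:isoperim}, applied to the $C^\infty$ obstacle $H^+$, then yields that $\Om$ is a half ball resting on $H$; matching its flat face to $F\subset H\cap\partial\cc$ (forced by $\Sigma\subset F$ together with the equality in \eqref{isoperim1}) shows that this flat face lies entirely in $F$, so that $\Om$ is a half ball supported on the facet $F$.

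The principal obstacle is the core step: establishing the integral-geometric lower bound on the extra area generated by a smooth approximation in terms of the total positive curvature of $\partial\cc$ on the contact set, and making sure it survives in the limit. This is exactly where the non-smoothness of $\cc$ must be exploited, presumably through a careful study of the (multi-valued) Gauss map of $\cc$, the area formula on its spherical image, and a Bonnesen-type quantitative refinement of the CGR inequality in the smooth case.
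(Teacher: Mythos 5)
Your preliminary step (boundedness and indecomposability) is fine and matches the paper, but the core step of your plan contains a genuine gap, and the route you sketch cannot be made to work as stated. First, the ``surface-replacement identity'' is wrong: the relative perimeter $P(\Om\setminus\cc_n;\R^N\setminus\cc_n)$ does \emph{not} contain the term $\H^{N-1}(\Om\cap\pa\cc_n)$, since that piece of boundary lies on $\pa\cc_n$ and is excluded from the relative perimeter; what one actually has is $P(\Om\setminus\cc_n;\R^N\setminus\cc_n)=P(\Om;\R^N\setminus\cc)-\H^{N-1}(\pa^*\Om\cap(\cc_n\setminus\cc))$. Feeding this into Theorem~\ref{th:isoperim} for the pair $(\Om\setminus\cc_n,\cc_n)$ and letting $n\to\infty$ gives back the original equality and nothing more, because both the excised area and the volume loss vanish in the limit: this is precisely why, as pointed out in \cite{cgr} and in the introduction of the paper, the nonsmooth equality case ``cannot be handled by a simple approximation argument.'' The missing ``integral-geometric lower bound'' on the extra area is not a technical detail you can defer; it is the whole problem, and no such bound is available for the set $\Om$ itself, whose behaviour near $\pa\cc$ (regularity, contact angle) is unknown. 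Moreover, even granting that the contact set avoids the singular part of $\pa\cc$, the inference ``smooth portion $\Rightarrow$ flat portion $\Rightarrow$ a single facet'' is a non sequitur: smooth points of a convex body need not be flat (a ball has a smooth boundary and no facet at all). What must actually be proved is that $\pa\Om\cap\cc$ lies on a supporting hyperplane, and in the paper this is obtained by a completely different mechanism: one solves constrained isoperimetric problems inside $\Om$ relative to the outer parallel bodies $\cc_\eta=\cc+\overline{B_\eta(0)}$, shows the minimizers are restricted $\Lambda$-minimizers satisfying a weak Young's law (via blow-up and a strong maximum principle), applies the quantitative, obstacle-independent stability part of Theorem~\ref{th:cgr-main1} to \emph{those} minimizers, and exploits the convergence of the constrained isoperimetric profiles to the half-space profile to force the width of the wet part to zero.

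Your final reduction also has a gap. The ``slicing argument across $H$'' does not yield $\Om\subset\overline{H^-}$: cutting $\Om$ with the hyperplane $H$ creates new interface on $H\setminus\cc$, so the relative perimeter of $\Om$ is in general \emph{not} bounded below by the sum of the relative perimeters of the two pieces, and strict subadditivity of $t\mapsto t^{(N-1)/N}$ cannot be invoked. (If you did know $\Om\subset\overline{H^-}$, the appeal to the smooth case of Theorem~\ref{th:isoperim} with the half-space obstacle would be legitimate, since $P(\Om;\R^N\setminus H^+)\le P(\Om;\R^N\setminus\cc)$ automatically; but establishing that containment is itself nontrivial.) The paper instead closes the argument by showing, again through the approximating constrained minimizers and the equality forced in the curvature chain, that the free boundary is umbilical, hence spherical, and only then identifies $\Om$ as a half ball sitting on the flat wet part.
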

Observe that, compared to the last part of Theorem~\ref{th:isoperim}, here we don't have any restriction on the convex set $\cc$ and we allow for possibly unbounded competitors. As in \cite{cgr} the starting point in order the get the characterization of the equality case in \eqref{isoperim1} is an estimate of the positive total curvature $\mathcal K^+(\Sigma)$ of a hypersurface $\Sigma\subset\overline{\R^N\setminus\cc}$ when the contact angle between $\pa\cc$ and $\Sigma$ is larger than or equal to a fixed $\theta\in(0,\pi)$.
Here $\mathcal K^+(\Sigma)$ denotes, roughly speaking, the measure of the image of the Gauss map restricted to those points where there exists a support hyperplane, see  Definition~\ref{kappapiunuovo} below. To state more precisely our result we need to introduce some notation: Given $\theta\in(0,\pi)$ we denote by $S_\theta$ the spherical cap
$$
S_\theta:=\{y\in \S^{N-1}:\, y\cdot e_N\geq \cos\theta\}\,.
$$
Moreover, given  $\Sigma\subset\overline{\R^N\setminus\cc}$ and a point $x\in\Sigma$ we denote by $N_x\Sigma$ the {\em normal cone}
$$
N_x\Sigma=\{\nu\in \S^{N-1}:\, (y-x)\cdot \nu\leq 0\text{ for all }y\in \Sigma\}\,,
$$
that is the set of (exterior) normals to support hyperplanes to $\Sigma$. We can now recall the definition of  total positive curvature.

\begin{definition}\label{kappapiunuovo}
Let $\cc$ be a closed convex set with not empty interior, $\Om\subset\R^N\setminus \cc$ a bounded open set  and $\Sigma:=\overline{\pa\Om\setminus \cc}$. The {\em total positive curvature} of $\Sigma$ is given by
$$
\mathcal K^+(\Sigma):=\H^{N-1}\Big(\bigcup_{x\in\Sigma\setminus\cc}N_x\Sigma\Big)\,.
$$
\end{definition}
The aforementioned estimate on the total positive curvature is provided by the following theorem, which will be proved in Section~\ref{sec:tot}. 
\begin{theorem}\label{th:cgr-main1}
Let $\cc\subset\R^N$ be a closed convex set of class $C^1$, $\Om\subset\R^N\setminus \cc$ a bounded open set  and $\Sigma:=\overline{\pa\Om\setminus \cc}$. Let $\theta_0\in (0, \pi)$ such that
\beq\label{cgr-main10}
\nu\cdot\nu_{\cc}(x)\leq\cos\theta_0 \quad \text{whenever $x\in\Sigma\cap\cc,\,\,\nu\in N_x\Sigma$,}
\eeq
where $\nu_{\cc}(x)$ stands for the outer unit normal to $\cc$ at $x$.
 Then,
\beq\label{cgr-main11}
\mathcal K^+(\Sigma)\geq \H^{N-1}(S_{\theta_0})\,.
\eeq
Moreover, let $r>0$ be such that $\Sigma\cap\cc\subset B_r(0)$.  For any $\e>0$ there exists $\delta$, depending on $\e,\theta_0$ and $r$, but not on $\cc$ or $\Om$, such that if
\beq\label{cgr-main11.1}
\nu\cdot\nu_{\cc}(x)\leq\cos\theta_0+\delta \quad \text{whenever $x\in\Sigma\cap\cc,\,\,\nu\in N_x\Sigma$,}
\eeq
and
\beq\label{cgr-main11.5}
\mathcal K^+(\Sigma)\leq \H^{N-1}(S_{\theta_0})+\delta\,,
\eeq
then  $\Sigma\cap\cc$ is not empty, $\wid(\Sigma\cap\cc)\leq\e$ and more precisely $\Sigma\cap\cc$ lies between two parallel $\e$-distant hyperplanes orthogonal to  $\nu_\cc(x)$  for some $x\in\Sigma\cap\cc$. In particular,
 if { \eqref{cgr-main10}
is satisfied and} the equality in \eqref{cgr-main11} holds, then $\Sigma\cap\cc$ is not empty and lies on a support hyperplane to $\cc$.  
\end{theorem}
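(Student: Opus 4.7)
The plan is to prove the qualitative estimate \eqref{cgr-main11} via a sliding-hyperplane / Gauss-map argument and then obtain the quantitative refinement by compactness and contradiction. Since $\Sigma$ is compact, for each $\nu\in\S^{N-1}$ the linear function $y\mapsto y\cdot\nu$ attains its maximum on $\Sigma$ at some point $x_\nu$, which by definition places $\nu$ in $N_{x_\nu}\Sigma$. Hence $\bigcup_{x\in\Sigma}N_x\Sigma=\S^{N-1}$, and denoting by $B'\subset\S^{N-1}$ the set of directions $\nu$ for which \emph{every} such maximizer lies in $\Sigma\cap\cc$, one has
\[
\mathcal K^+(\Sigma)\geq \H^{N-1}(\S^{N-1})-\H^{N-1}(B').
\]
It thus suffices to prove $\H^{N-1}(B')\leq \H^{N-1}(\S^{N-1})-\H^{N-1}(S_{\theta_0})$. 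For each $\nu\in B'$ and maximizer $x_\nu\in\Sigma\cap\cc$, hypothesis \eqref{cgr-main10} yields $\nu\cdot\nu_\cc(x_\nu)\leq\cos\theta_0$. To convert this pointwise bound into the desired global measure estimate, I would exploit the convexity of $\cc$, which provides the cyclic monotonicity $(\nu_\cc(x)-\nu_\cc(y))\cdot(x-y)\geq 0$ on $\pa\cc$ together with the complementary monotonicity $(\nu_1-\nu_2)\cdot(x_{\nu_1}-x_{\nu_2})\geq 0$ of the maximizer selection. A natural way to package these monotonicities is to close up $\Sigma$ by the piece $T=\overline{\pa\Omega\cap\cc}\subset\pa\cc$, so that $\pa\Omega=\Sigma\cup T$ bounds the compact region $\overline\Omega$; the elementary fact that the support directions of a bounded region cover $\S^{N-1}$ in full measure, combined with the control on the contribution coming from $T$ (where the outward normal to $\Omega$ equals $-\nu_\cc$, and the angle condition confines the remaining support directions near $\Sigma\cap\cc$), should yield the required estimate on $\H^{N-1}(B')$.

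For the quantitative / stability statement I would argue by compactness and contradiction. If it failed, one could extract sequences $(\cc_n,\Omega_n,\Sigma_n)$ with $\delta_n\to 0$ satisfying \eqref{cgr-main11.1}--\eqref{cgr-main11.5} but for which $\Sigma_n\cap\cc_n$ is not trapped in a slab of width $\e$ orthogonal to any $\nu_{\cc_n}(x)$. The uniform localization $\Sigma_n\cap\cc_n\subset B_r(0)$ enables Blaschke selection on $\cc_n\cap B_R$ (for $R$ large) and Hausdorff compactness for the compact sets $\Sigma_n\cap\cc_n$; passing to a subsequence one obtains limits $\cc_\infty$ and $(\Sigma\cap\cc)_\infty$ satisfying \eqref{cgr-main10} and equality in \eqref{cgr-main11}. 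The equality case of the first part, itself a byproduct of the Gauss-map argument above, then forces $(\Sigma\cap\cc)_\infty$ to lie on a support hyperplane of $\cc_\infty$; together with the Hausdorff convergence of $\Sigma_n\cap\cc_n$ and of the outer normals, this contradicts the assumed non-slab property. The last assertion of the theorem (the equality case under \eqref{cgr-main10}) then follows from the stability version by setting $\delta=0$ and letting $\e\to0^+$.

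The main obstacle will be the measure estimate on $B'$ in the first part. The naive bound by the union of the spherical caps $\{\nu:\nu\cdot\nu_\cc(x)\leq\cos\theta_0\}$ indexed by $x\in\Sigma\cap\cc$ is too weak: for a general convex $\cc$ with varying outer normal along the contact set, this union can cover $\S^{N-1}$ entirely. Overcoming this requires genuinely exploiting the convexity of $\cc$ together with the maximality of $x_\nu$, most plausibly through the closed-surface completion $\pa\Omega=\Sigma\cup T$ and a Gauss-Bonnet-style accounting that cleanly separates the contributions of $\Sigma$ and $T$ and uses the angle hypothesis precisely along $\Sigma\cap\cc$. The $C^1$-regularity of $\cc$ is employed exactly to keep $\nu_\cc$ continuous along the contact set during this accounting, and to allow the pointwise formulation \eqref{cgr-main10} to be invoked at every point.
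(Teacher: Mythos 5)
There is a genuine gap, and you have in fact located it yourself: the measure estimate $\H^{N-1}(B')\leq \H^{N-1}(\S^{N-1})-\H^{N-1}(S_{\theta_0})$ is the entire content of inequality \eqref{cgr-main11} (it is just its complement), and your proposal does not prove it. The hypothesis \eqref{cgr-main10} only gives, for each bad direction $\nu\in B'$, the pointwise constraint $\nu\cdot\nu_\cc(x_\nu)\leq\cos\theta_0$ at \emph{some} contact maximizer $x_\nu$, and since $\nu_\cc$ varies along $\Sigma\cap\cc$ these constraints do not by themselves confine $B'$ to a set of the right measure; the ``cyclic monotonicity plus Gauss--Bonnet-style accounting on $\pa\Om=\Sigma\cup T$'' step is only a hope, not an argument (note also that at points in the relative interior of the wet part $T$ nothing constrains the normal cone of $\overline\Om$, and $\Om$ is not convex, so the closed-surface completion does not obviously separate the contributions). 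What is actually needed here is a structural statement about how the normal cones at contact points tile the sphere. The paper's route is: (a) a sliding-hyperplane argument showing that every direction in the \emph{restricted} normal bundle $N^{\sigma,\theta}(\Sigma\cap\cc)$ (normals to support hyperplanes of the contact set within angle $\theta<\theta_0$ of $\nu_\cc$) is attained as a normal to $\Sigma$ at a point \emph{off} $\cc$ — this uses \eqref{cgr-main10} only to rule out contact at the starting point; and (b) the lower bound $\H^{N-1}(N^{\sigma,\theta}X)\geq\H^{N-1}(S_\theta)$ for any compact $X\subset B_r(0)$ and continuous selection $\sigma(x)\in N_xX$, proved by discrete approximation, disjointness of the interiors of normal cones at distinct exposed points, and a quantitative cap-proportion lemma for spherically convex subsets of $\S^{N-1}$ (Lemma~\ref{lm:spheco}--Proposition~\ref{prop:53}). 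Nothing in your plan plays the role of (b), and without it the bound on $\H^{N-1}(B')$ does not follow.

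The stability part has a second, related problem. Arguing by compactness on whole configurations $(\cc_n,\Om_n,\Sigma_n)$ is delicate precisely because the theorem demands $\delta$ independent of $\cc$ and $\Om$: the sets $\Om_n$ and $\Sigma_n$ are not uniformly bounded (only $\Sigma_n\cap\cc_n\subset B_r(0)$ is assumed), the limit of the $\cc_n$ need not be $C^1$ nor need the limit contact set arise as $\overline{\pa\Om\setminus\cc}\cap\cc$ for any admissible pair, $\mathcal K^+$ has no evident semicontinuity along such limits, and the normals $\nu_{\cc_n}$ need not converge where you need them. So even granting the (unproven) equality case of the first part, you cannot invoke it for the limit object, and the contradiction does not close. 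In the paper the quantitative statement with $\delta=\delta(\e,\theta_0,r)$ comes directly from the quantitative versions of the spherical lemma and of Proposition~\ref{prop:53}, where the compactness is performed on subsets of the fixed ball $B_r(0)$ and of $\S^{N-1}$; this is what makes the estimate manifestly independent of the obstacle, which is also the feature the main theorem later relies on. To repair your proposal you would need to prove an analogue of the restricted-normal-bundle estimate (b) in a quantitative form; once you have that, both the inequality and the stability statement follow along the lines you indicate, and your final reduction of the equality case (take $\delta$ arbitrary and let $\e\to0^+$, then pass to a limit of the slabs) is fine.
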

Note that in the previous statement  $\wid(\Sigma\cap\cc)$ denotes the distance between the closest pair of parallel hyperplanes which contains $\Sigma\cap\cc$ in between them, see \eqref{eqwid}. Even though the proof of this theorem follows the general strategy of \cite{cgr0} we are able to  improve their result in three directions: (1) we consider a general contact angle $\theta_0\in(0,\pi)$, whereas in \cite{cgr0} only the case $\theta_0=\pi/2$ is considered; (2) we do not assume any regularity on $\Sigma$ and the contact angle condition can  be replaced by  the weaker condition \eqref{cgr-main10}; (3) we get a stability estimate on the `contact part' $\Sigma\cap\cc$ which is \emph{independent of the shape of the convex set }$\cc$. As we will explain below (2) and (3) are crucial in the proof of Theorem~\ref{th:ugual}.

As a consequence of independent interest of the previous theorem we  prove a sharp  inequality for the Willmore energy, see Theorem~\ref{41}.

Before outlining our strategy of the proof of Theorem~\ref{th:ugual} we briefly recall how in \cite{cgr} it is proven that a bounded set $\Om_0$ satisfying the equality  in \eqref{isoperim1} is a half ball, when $\cc$ is sufficiently smooth. There the idea is to consider the isoperimetric profile 
$$
I(m)=\inf\{P(E;\R^N\setminus\cc):\, E\subset\R^N\setminus\cc,\, |E|=m\}\,,
$$
defined for all $m\in(0,|\Om_0|]$, and to show that $I(m)=N\big(\frac{\omega_N}{2}\big)^{\frac{1}{N}}m^{\frac{N-1}{N}}$, that is $I(m)$ coincides with the isoperimetric profile $I_{\mathscr H}(m)$ of the half space.   Moreover, since  $I'(|\Om_0|)=H_\Sigma$, where $H_\Sigma$ is the mean curvature of  $\Sigma=\overline{\pa\Om_0\setminus\cc}$,  
\beq\label{intro1}
\begin{split}
I(|\Om_0|)\big(I'(|\Om_0|)\big)^{N-1}&=\int_{\Sigma\setminus\cc}H_\Sigma^{N-1}\,d\H^{N-1}\geq
(N-1)^{N-1}{\mathcal K}^+(\Sigma)\\
&\geq(N-1)^{N-1}\H^{N-1}(S_{\pi/2})=I_{\mathscr H}(|\Om_0|)\big(I_{\mathscr H}'(|\Om_0|))^{N-1}\,,
\end{split}
\eeq
where the first inequality follows from an application of coarea formula and the geometric-arithmetic mean inequality, see for instance the proof of Theorem~\ref{41}, and the second one follows from the estimate of the total curvature proved in \cite[Lemma 3.1]{cgr}. Now, since $I(m)=I_{\mathscr H}(m)$ for all $m\in[0,|\Om_0|]$, all the inequalities in \eqref{intro1} are equalities. In particular this implies that ${\mathcal K}^+(\Sigma)=\H^{N-1}(S_{\pi/2})$ and that $\Sigma$ is umbilical. From this information, it is not difficult to see that $\Sigma$ must be a half ball.

Note that in the proof of \cite[Lemma 3.1]{cgr}
 it is crucial that the regular part of $\Sigma$ meets $\pa\cc$ orthogonally and in a $C^2$ fashion. This can be inferred from the boundary regularity theory for perimeter minimizers which can be applied only if $\cc$ is sufficiently smooth. Therefore the above argument fails for a general convex set.
 
In order to deal with this lack of regularity we implement a delicate argument based on the approximation of $\cc$  with more regular convex sets.  


Let us describe the argument more in detail. Denote by $\Om_0$ a set of finite perimeter satisfying the equality  in \eqref{isoperim1}. For $\eta>0$ sufficiently small we approximate $\cc$ with the closed $\eta$-neighborhood $\cc_\eta=\cc+\overline{B_\eta(0)}$, which is of class $C^{1,1}$.  Now the idea is to consider the relative isoperimetric
problem in $\R^N\setminus\cc_\eta$. In order to force the minimizers to converge to $\Om_0$ when $\eta\to0$ and the prescribed mass $m$ converges to $|\Om_0|$, we introduce the following constrained isoperimetric profiles with obstacle $\Om_0$:
\beq\label{intro0}
I_\eta(m)=\min\{P(E;\R^N\setminus\cc_\eta):\, E\subset\Om_0\setminus\cc_\eta,\, |E|=m\}
\eeq
for all $m\in(0,|\Om_0\setminus\cc_\eta|]$. Denote by $\Om_{\eta,m}$ a minimizer of the above problem and set $\Sigma_{\eta,m}:=\overline{\pa\Om_{\eta,m}\setminus\cc_\eta}$. Note that  in the general $N$-dimensional case, both the obstacle $\Om_0$ and the minimizers $\Om_{\eta,m}$ may have singularities. Thus, despite the fact that  $\pa\cc_\eta$ is of class $C^{1,1}$, we cannot apply the known boundary regularity results at the points  $x\in\pa\Om_{\eta,m}\cap\pa\cc_\eta\cap\pa\Om_0$.

 However, one useful observation is that $\Om_{\eta,m}$ is  a {\em restricted $\Lambda$-minimizer}, i.e., a $\Lambda$-minimizer with respect to perturbations that do not increase the ``wet part'' $\pa\Om_{\eta, m}\cap \cc_\eta$ (see Definition~\ref{def:lambdamin} below), with a $\Lambda>0$ which can be made  uniform with respect to $\eta$ and locally uniform with respect to $m$ (see Steps 1 and 2 of the proof of Theorem~\ref{th:ugual}).  Another important observation is that  restricted $\Lambda$-minimizers satisfy uniform volume density estimates up to the boundary $\pa \cc_\eta$. All these facts are combined to show that   the constrained isoperimetric profiles \eqref{intro0} are Lipschitz continuous and that 
their derivatives coincide a.e. with the constant  mean curvature $H_{\Sigma^*_{\eta, m}}$ of the regular part $\Sigma^*_{\eta, m}$ of  
$\Sigma_{\eta, m}\setminus \pa\Om_0$ (see Steps 3 and 4). 

As in the argument of \cite{cgr} another important ingredient is represented by the inequality
\beq\label{intro0.5}
\mathcal K^+(\Sigma_{\eta,m})\geq \H^{N-1}(S_{\pi/2})=\frac12 N\omega_N\,,
\eeq
which would  hold  by \cite[Lemma 3.1]{cgr} if we could show that $\Sigma_{\eta,m}$  meets $\pa\cc_\eta$ orthogonally  and in a sufficiently smooth fashion. However, as already observed, due the possible presence of boundary singularities at $\pa\Om_{\eta,m}\cap\pa\cc_\eta\cap\pa\Om_0$ we cannot show that the aforementioned orthogonality condition is attained in a classical sense. An important  step of our argument, which allows us  to overcome this difficulty, consists in showing that restricted $\Lambda$-minimizers satisfy the $\pi/2$  contact angle condition with respect to $\pa \cc_\eta$ in a  ``viscosity'' sense, namely that  the following weak Young's law holds:
 \beq\label{intro1.5}
\nu\cdot\nu_{\cc_\eta}(x)\leq0 \quad \text{whenever $x\in\Sigma_{\eta,m}\cap\cc_\eta,\,\,\nu\in N_x\Sigma_{\eta,m}$\,.}
\eeq
This is achieved in Step 5 by combining a blow-up argument with a variant of the Strong Maximum Principle that we adapted from \cite{dephilippis-maggi-arma}. In turn, owing to \eqref{intro1.5} we may apply Theorem~\ref{th:cgr-main1} to obtain \eqref{intro0.5}. Having established  the latter and with some extra work 
  we can show that $I_\eta(m)\to I_{\mathscr H}(m)$ as $\eta\to0$ for every $m\in(0,|\Om_0|)$, where we recall $I_{\mathscr H}(m)=N\big(\frac{\omega_N}{2}\big)^{\frac{1}{N}}m^{\frac{N-1}{N}}$ is the isoperimetric profile of the halph space (see Steps 6 and 7). 
  
With the convergence of the isoperimetric profiles $I_\eta$ at hand  and using again \eqref{intro0.5}, we can then prove that for a.e. $m\in(0,|\Om_0|)$ 
\beq\label{intro2}
\mathcal K^+(\Sigma_{\eta,m})\to\frac12 N\omega_N\,,
\eeq
and thus $\Sigma_{\eta,m}$ almost satisfies the case of equality in \eqref{cgr-main11}  for $\eta$ sufficiently small. Thanks to the last part of Theorem~\ref{th:cgr-main1} we may then infer that $\Sigma_{\eta,m}\cap\cc_\eta$ is almost flat and with some extra work that the whole wet part $\pa\Om_{\eta, m}\cap \cc_\eta$ has the same property. By showing that for  suitable sequences $m_n\nearrow |\Om_0|$ and $\eta_n\searrow 0$, $\pa\Om_{\eta, m_n}\cap \cc_\eta\to \pa\Om_0\cap\cc$ in the Hausdorff sense, we may finally conclude that   $\pa\Om_0\cap\cc$ is flat and lies on a facet of $\cc$ (see Step 8). 
We highlight here that in all the above argument  it is crucial that the stability estimate on the width of  $\Sigma_{\eta,m}\cap\cc_\eta$  provided by our version Theorem~\ref{th:cgr-main1}  is  independent of the shape of the convex set $\cc_\eta$.  

Having established that the wet part $\pa\Om_0\cap\cc$ is flat, more work is still needed in the final step of the proof to deduce again from \eqref{intro2} that $\Om_0$ is umbilical and in turn a half ball supported on a facet of $\cc$.

The paper is organized as follows: in Section~\ref{sc:2} we collect a few known results of the regularity theory of perimeter quasi minimizers needed in the paper. In Section~\ref{sec:tot} we prove  Theorem~\ref{th:cgr-main1}, while the proof of Theorem~\ref{th:ugual} occupies the whole Section~\ref{sc:4}  with some of the most technical steps outsourced to Section ~\ref{sc:6}. Section~\ref{sc:5} contains further regularity properties if restricted $\Lambda$-minimizers that are needed in the proof of the main result and the proof of the version of the Strong Maximum Principle needed here.
\section{Preliminaries}\label{sc:2}
Throughout the paper
we denote by $B_r(x)$ the ball in $\R^N$  of center $x$ and radius $r>0$. 
In the following we shall often deal with sets of finite perimeter. For the definition and the basic properties of sets of (locally) finite perimeter we refer to the books \cite{AFP, maggi-book}. Here we fix some notation for later use. 
Given $E\subset \R^N$ of locally finite perimeter and a Borel set $G$ we denote by $P(E;G)$ the perimeter of $E$ in $G$. The {\em reduced boundary} of $E$ will be denoted by $\pa^*E$, while $\pa^eE$ will stand for the {\em essential boundary} defined as
$$
\pa^eE:=\R^N\setminus(E^{(0)}\cup E^{(1)})\,,
$$
where $E^{(0)}$ and $E^{(1)}$ are the sets of points where the density of $E$ is $0$ and $1$, respectively. Moreover, we denote by $\nu_E$ the {\em generalized exterior normal} to $E$, which is well defined at each point of $\pa^*E$, {and by $\mu_E$ the {\em Gauss-Green measure} associated to $E$}
\beq\label{gg}
\mu_E:=\nu_E\,\H^{N-1}\res\pa^*E\,.
\eeq
In the following, when dealing with a set of locally finite perimeter $E$, we shall always tacitly assume that $E$ coincides with a precise representative that satisfies the property $\pa E=\overline{\pa^*E}$, see \cite[Remark 16.11]{maggi-book}. A possible choice is given by $E^{(1)}$ for which one may easily check that
\beq\label{Euno}
\pa E^{(1)}=\overline{\pa^*E}\,.
\eeq

We recall the well known notion of perimeter {\em $(\Lambda,r_0)$-minimizer} and the main properties which will be used here.

\begin{definition}\label{def:lambdaminclassic}
Let $\Om\subset\R^N$ be an open set. We say that a set of locally finite perimeter $E\subset\R^N$ is a {\em perimeter $(\Lambda,r_0)$-minimizer} in $\Om$, $\Lambda\geq0$ and $r_0>0$,  if for any ball $B_r(x_0)\subset\Om$, with $0<r\leq r_0$ and any $F\subset\R^N$ such that $E\Delta F\subset\!\subset B_r(x_0) $ we have
$$
P(E;B_r(x_0))\leq P(F;B_r(x_0))+\Lambda|E\Delta F|\,.
$$
\end{definition}
In order to state a useful compactness theorem for $\Lambda$-minimizers we recall that a sequence $\{\mathcal C_n\}$ of closed sets converge in the {\em Kuratoswki sense} to a closed set $\mathcal C$ if the following conditions are satisfied:
\begin{itemize}
\item[(i)] if $x_n\in\mathcal C_n$ for every $n$, then any limit point of $\{x_n\}$ belongs to $\mathcal C$;
\item[(ii)] any $x\in\mathcal C$ is the limit of a sequence $\{x_n\}$ with $x_n\in\mathcal C_n$.
\end{itemize}
One can easily see that $\mathcal C_n\to\mathcal C$ in the sense of Kuratowski if and only if dist$(\cdot,\mathcal C_n)\to$ dist$(\cdot,\mathcal C)$ locally uniformly in $\R^N$. In particular, by the Arzel\`a-Ascoli Theorem any sequence of closed sets admits a subsequence which converge in the sense of Kuratowski.

Throughout the paper, with a common abuse of notation, we write $E_h\to E$ in $L^1$ ($L^1_{loc}$) instead of  $\Chi{E_h}\to\Chi{E}$ in $L^1$ ($L^1_{loc}$). {Moreover, given a sequence of Radon measures $\mu_h$ in an open set $\Om$, we say that $\mu_h\stackrel{*}{\wto}\mu$ {\em weakly* in $\Om$ in the sense of measures}
 if
$$
\int_\Om\varphi\,d\mu_h\to\int_\Om\varphi\,d\mu\qquad\text{for all $\varphi\in C^0_c(\Om)$\,.}
$$
}
Next theorem is a well known result, see for instance \cite[Ch. 21]{maggi-book}. 
\begin{theorem}\label{th:compact}
Let $\Om\subset\R^N$ be an open set and $\{E_n\}$ a sequence of locally finite perimeter sets contained in $\Om$ satisfying the following property: there exists $r_0>0$ such that for every $n$, $E_n$ is a perimeter $(\Lambda_n,r_0)$-minimizer in $\Om$, with $\Lambda_n\to\Lambda\in[0,+\infty)$. Then there exist $E\subset\Om$ of locally finite perimeter and a subsequence $\{n_k\}$ such that
\begin{itemize}
\item[(i)] $E$ is a $(\Lambda,r_0)$-minimizer in $\Om$;
\item[(ii)]  $E_{n_k}\to E$ in $L^1_{loc}(\Om)$,
\item[(iii)] $\pa E_{n_k}\to \mathcal C$ in the Kuratowski sense for some closed set $\mathcal C$ such that $\mathcal C\cap\Om=\pa E\cap\Om$;
\item[(iv)] $\H^{N-1}\res(\pa E_{n_k}\cap\Om)\stackrel{*}{\wto}\H^{N-1}\res(\pa E\cap\Om)$ weakly* in $\Om$ in the sense of measures.
\end{itemize}
\end{theorem}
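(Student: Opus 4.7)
My plan follows the standard machinery for $(\Lambda,r_0)$-minimizers: derive uniform local perimeter bounds, extract an $L^1_{\loc}$-limit, establish uniform density estimates, then upgrade $L^1_{\loc}$-convergence to Kuratowski convergence of the topological boundaries and weak* convergence of the surface measures, and finally pass to the limit in the minimality inequality.

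First, for any $B_r(x_0)\subset\!\subset\Om$ with $r\leq r_0$, comparing $E_n$ with $E_n\setminus B_r(x_0)$ gives $P(E_n;B_r(x_0))\leq N\omega_N r^{N-1}+\Lambda_n\omega_N r^N$, uniformly in $n$. The standard compactness theorem for sets of finite perimeter then yields, along a subsequence, $E_{n_k}\to E$ in $L^1_{\loc}(\Om)$ with $E$ of locally finite perimeter; this proves (ii). Testing the minimality of $E_n$ against $E_n\cup B_r(x)$ and $E_n\setminus B_r(x)$ and using the relative isoperimetric inequality, one also obtains the classical uniform density estimates
\[
c_0 r^N\leq |E_n\cap B_r(x)|\leq (1-c_0)r^N,\qquad P(E_n;B_r(x))\geq c_0\, r^{N-1},
\]
valid for all $x\in\pa E_n$ and $0<r<r_1$, with $c_0,r_1>0$ depending only on $r_0$ and $\sup_n\Lambda_n$.

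To prove (i), fix a competitor $F$ for $E$ with $E\Delta F\subset\!\subset B_r(x_0)\subset\Om$, $r\leq r_0$. For a.e.\ slightly larger $\rho$ (since $\H^{N-1}(\pa B_\rho(x_0)\cap\pa^*E_n)=0$ for all but countably many $\rho$), the set $F_n:=(F\cap B_\rho(x_0))\cup(E_n\setminus B_\rho(x_0))$ is an admissible competitor for $E_n$ on $B_\rho(x_0)$. Passing to the limit via $L^1_{\loc}$-convergence and lower-semicontinuity of the perimeter, then letting $\rho\downarrow r$, recovers the $(\Lambda,r_0)$-minimality of $E$. For (iii), the functions $\dist(\cdot,\pa E_n)$ are $1$-Lipschitz, so Arzel\`a-Ascoli gives Kuratowski convergence of $\pa E_{n_k}$ to some closed set $\mathcal C$, and the density estimates identify $\mathcal C\cap\Om$ with $\pa E\cap\Om$: any accumulation point of $\pa E_n$ inherits two-sided density bounds for $E$ and so lies in $\pa E$, while conversely every point of $\pa E\cap\Om$ is approximated by points in $\pa^*E$ (using $\pa E=\overline{\pa^*E}$), which in turn are limits of points in $\pa E_n$ by $L^1_{\loc}$-convergence together with the density estimates.

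Finally, for (iv), extract a weak* limit $\mu$ of $\H^{N-1}\res(\pa E_{n_k}\cap\Om)$, which exists by the uniform local perimeter bounds. Lower-semicontinuity gives $\mu\geq\H^{N-1}\res(\pa E\cap\Om)$, and the matching upper bound comes from testing the minimality of each $E_{n_k}$ against the competitor obtained by replacing $E_{n_k}$ by $E$ on small balls. The main obstacle I expect is precisely this last step: ruling out concentration of extra surface mass outside $\pa E$. Here the density estimates are essential, since otherwise $\pa E_n$ could carry vanishing-in-Lebesgue but surface-persistent pieces; the lower density bound $P(E_n;B_r(x))\geq c_0 r^{N-1}$ coupled with a Vitali covering of $\mathrm{supp}\,\mu\setminus\pa E$ forces such pieces to contribute Lebesgue mass, contradicting the $L^1_{\loc}$-convergence.
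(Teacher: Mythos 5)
Your proposal is correct and is exactly the standard argument (uniform local perimeter bounds, $L^1_{loc}$ compactness, uniform density estimates, cut-and-paste comparison on balls with good radii to pass minimality and perimeter measures to the limit, Arzel\`a--Ascoli on distance functions for Kuratowski convergence), which is precisely what the paper relies on: it gives no proof of Theorem~\ref{th:compact} and simply cites \cite[Ch.~21]{maggi-book}, where this scheme is carried out. The only details left implicit in your sketch (choosing radii $\rho$ so that the slice $\H^{N-1}(\pa B_\rho\cap(E\Delta E_{n_k}))$ vanishes in the limit, and the compact-containment bookkeeping for the competitors) are routine parts of that same standard proof.
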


\begin{remark}\label{rm:compact}
From the definition of Kuratowski convergence it is not difficult to see that (ii) and (iii) of Theorem~\ref{th:compact} imply that, up to extracting a further subsequence if needed, $\overline{E_{n_k}}\to K$ in the sense of Kuratowski, with $K\cap\Om=\overline E\cap\Om$.
\end{remark}

\begin{definition}
Given a set of locally finite perimeter $E$, we say that a function $h\in L^1_{loc}(\pa^*E)$ is the {\em weak mean curvature} of $E$ if for any  vector field $X\in C^1_c(\R^N;\R^N)$ we have
$$
\int_{\pa^*E}\Div_\tau X\,d\H^{N-1}=\int_{\pa^*E}h\, X\cdot\nu_E,d\H^{N-1}\,,
$$
where $\Div_\tau X:=\Div X-(\pa_{\nu_E}X)\cdot\nu_E$ stands for the tangential divergence of $X$ along $\pa^*E$. If such an $h$ exists we will denote it by $H_{\pa E}$.
\end{definition}
Note that if $\pa E$ is of class $C^2$ then $H_{\pa E}$ coincides with the classical mean curvature, or more precisely with the sum of all principal curvatures. In particular, if $E$ coincides locally with the subgraph of a function $u$ of class $C^2$ then locally
$$
H_{\pa E}=-\Div\bigg(\frac{\nabla u}{\sqrt{1+|\nabla u|^2}}\bigg)\,.
$$
Concerning the above mean curvature operator, we recall the following useful Strong Maximum Principle, see for instance \cite[Th. 2.3]{PS}, which covers a more general class of quasilinear equations.

\begin{theorem}\label{th:SMP}
Let $\Om\subset\R^{N-1}$ be an open set and let $u,v\in C^2(\Om)$ such that $u\leq v$ and
$$
\Div\bigg(\frac{\nabla u}{\sqrt{1+|\nabla u|^2}}\bigg)=\lambda=\Div\bigg(\frac{\nabla v}{\sqrt{1+|\nabla v|^2}}\bigg)\,
$$
for some constant $\lambda\in\R$. If $u(x_0)=v(x_0)$ for some $x_0\in\Om$, then $u\equiv v$.
\end{theorem}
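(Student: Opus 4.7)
The plan is to reduce the quasilinear statement to the classical strong maximum principle for a \emph{linear} uniformly elliptic operator, applied to the nonnegative difference $w:=v-u$. Since $u(x_0)=v(x_0)$ and $w\geq 0$ throughout $\Omega$, the function $w$ attains an interior minimum at $x_0$, so a Hopf-type argument will force $w\equiv 0$ on the connected component of $x_0$ (and then on $\Omega$ by a standard continuation argument).

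The key step is the linearization. Writing
\[
a(p):=\frac{p}{\sqrt{1+|p|^2}},\qquad A(p):=Da(p)=\frac{1}{\sqrt{1+|p|^2}}\Bigl(\mathrm{Id}-\frac{p\otimes p}{1+|p|^2}\Bigr),
\]
so that the mean curvature operator reads $M(\phi)=\Div\,a(\nabla\phi)$, the fundamental theorem of calculus gives
\[
a(\nabla v)-a(\nabla u)=\Bigl(\int_0^1 A\bigl(\nabla u+t\nabla w\bigr)\,dt\Bigr)\nabla w=:\mathbf{A}(x)\,\nabla w.
\]
Taking divergences and using $M(u)=M(v)=\lambda$ yields the linear divergence-form equation
\[
\Div\bigl(\mathbf{A}(x)\,\nabla w\bigr)=0\qquad\text{in }\Omega.
\]
Because $u,v\in C^2(\Omega)$, on any ball $\overline{B}\subset\Omega$ the gradients $\nabla u,\nabla v$ are bounded, say by $M_B$. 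The matrix $A(p)$ has eigenvalues in $\bigl[(1+|p|^2)^{-3/2},(1+|p|^2)^{-1/2}\bigr]$, so $\mathbf{A}(x)$ is uniformly elliptic on $\overline{B}$ with ellipticity constants depending only on $M_B$. The $C^2$ regularity of $u,v$ further gives that the entries of $\mathbf{A}$ are of class $C^1$ on $\overline{B}$, hence bounded and measurable with bounded divergence, which is far more than needed to apply the classical Hopf strong maximum principle.

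Having reached a linear uniformly elliptic equation $\Div(\mathbf{A}\nabla w)=0$ with a nonnegative solution $w$ vanishing at the interior point $x_0$, Hopf's strong maximum principle (in the standard divergence form version, see e.g.\ Gilbarg--Trudinger) implies $w\equiv 0$ on the connected component of $\Omega$ containing $x_0$. If $\Omega$ is connected this concludes the proof; otherwise one interprets the statement componentwise, as usual. The only subtle point is making sure the linearization really produces a \emph{uniformly} elliptic operator with admissible coefficients, and this is handled cleanly by the explicit formula for $A(p)$ and the $C^2$ assumption on $u,v$; no genuinely nonlinear maximum principle machinery is needed.
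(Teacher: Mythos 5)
Your argument is correct, and it is worth noting that the paper does not prove this statement at all: it is quoted as a known result with a reference to Pucci--Serrin (\cite{PS}, Theorem~2.3), which establishes a tangency/comparison principle for a much wider class of quasilinear, possibly degenerate, operators. Your route is the standard elementary one for the smooth setting: linearize by writing
$a(\nabla v)-a(\nabla u)=\bigl(\int_0^1 Da(\nabla u+t\nabla w)\,dt\bigr)\nabla w$ with $w=v-u\geq 0$, observe that the resulting divergence-form equation $\Div(\mathbf A(x)\nabla w)=0$ is uniformly elliptic on compact subsets because $u,v\in C^2$ give local gradient bounds and the eigenvalues of $Da(p)$ lie in $[(1+|p|^2)^{-3/2},(1+|p|^2)^{-1/2}]$, and then invoke the linear strong maximum principle (De Giorgi--Nash--Moser/Harnack, e.g.\ Gilbarg--Trudinger Thm.~8.19, which needs only bounded measurable coefficients) to propagate the interior zero of $w$. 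This buys a short, self-contained proof that is entirely sufficient for the way the theorem is used in the paper (comparison of two $C^2$ graphs with the same constant mean curvature), whereas the cited Pucci--Serrin result buys generality (non-smooth solutions, degenerate operators) that is not needed here. The one caveat you rightly flag is that the statement, as written with an arbitrary open $\Om\subset\R^{N-1}$, implicitly requires connectedness (or a componentwise reading): the maximum principle only forces $w\equiv 0$ on the connected component containing $x_0$, and this is how the result is applied in the paper (locally, then spread by a connectedness argument in Step~5).
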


We recall the following classical regularity result for $\Lambda$-minimizers. 

\begin{theorem}\label{rm:regola}
Let $E$ be a perimeter $(\Lambda,r_0)$-minimizer  in some open set $\Om\subset\R^N$. Then
\begin{itemize}
\item[(i)] $\pa^*E\cap\Om$ is a hypersurface of class $C^{1,\alpha}$ for every $\alpha\in(0,1)$, relatively open in $\pa E\cap\Om$. Moreover, $\text{dim}_{\H}((\pa E\setminus\pa^* E)\cap\Om)\leq N-8$, where $\text{dim}_{\H}$ stands for the Hausdorff dimension;
\item[(ii)] $H_{\pa E}\in L^\infty(\pa^*E\cap\Om)$, with $\|H_{\pa E}\|_{L^\infty}\leq\Lambda$,  and thus $\pa^*E\cap\Om$ is of class $W^{2,p}$ for all $p\geq1$;
\item[(iii)]  if there exists a $C^1$ hypersurface $\Sigma$ touching $\pa E$ at $x\in\Om$ and  lying on one side with respect to $\pa E$ in a neighborhood of $x$, then $x\in\pa^*E$. 
\end{itemize}
\end{theorem}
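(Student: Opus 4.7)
The plan is to recover (i)–(iii) from the classical regularity theory for almost minimizers of the perimeter, viewing the $\Lambda|E\Delta F|$ term as a lower-order perturbation in the sense of De Giorgi--Tamanini.

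\textbf{Regularity of the reduced boundary (i).} On any ball $B_r(x)\subset\Om$ with $r\leq r_0$, $\Lambda$-minimality gives $P(E;B_r(x))\leq P(F;B_r(x))+\Lambda\om_N r^N$, which is an almost minimality condition with gauge $\om(r)=\Lambda r$. At a point $x\in\pa^*E\cap\Om$, after aligning $\nu_E(x)$ with $-e_N$, the cylindrical excess decays geometrically: there exist $\theta\in(0,1)$ and $\e_0>0$ such that if the excess at scale $r$ is below $\e_0$, then at scale $\theta r$ it is bounded by $\theta^{2\a}$ times its value at scale $r$ plus $C\Lambda r$. Iterating yields Campanato-type decay of the normal, so $\pa^*E\cap\Om$ is locally a $C^{1,\a}$ graph and is relatively open in $\pa E\cap\Om$. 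The Hausdorff dimension bound on $\pa E\setminus\pa^*E$ then follows by Federer's dimension reduction applied to blow-up cones, which are genuinely area-minimizing since the $\Lambda$-term is scale-subcritical and vanishes in the limit.

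\textbf{Bound on the mean curvature (ii).} At a regular point, $\pa^*E\cap\Om$ can be written locally as a graph $x_N=u(x')$ with $u\in C^{1,\a}$. Testing $\Lambda$-minimality against normal variations of the graph yields the weak minimal surface equation $-\Div\bigl(\nabla u/\sqrt{1+|\nabla u|^2}\bigr)=H_{\pa E}$. The bound $|H_{\pa E}|\leq\Lambda$ a.e.\ comes from the first variation identity: the infinitesimal perimeter change equals $\int H_{\pa E}\,\varphi\,\sqrt{1+|\nabla u|^2}\,dx'$ while the volume change equals $\int\varphi\,dx'$, and the $\Lambda$-minimality inequality forces $|\delta P|\leq\Lambda|\delta\mathrm{vol}|$ along arbitrary localized variations, yielding the pointwise estimate after standard test function choices. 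With bounded right-hand side and $C^{0,\a}$ coefficients, Calder\'on--Zygmund theory for linear elliptic equations in non-divergence form promotes $u$ to $W^{2,p}_{\loc}$ for every $p\geq 1$.

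\textbf{Touching hypersurface (iii).} Argue by contradiction, assuming $x\in(\pa E\setminus\pa^*E)\cap\Om$. Blow $E$ up at $x$ along scales $r_k\to 0$: by Theorem~\ref{th:compact} and Remark~\ref{rm:compact}, after a subsequence the rescalings $(E-x)/r_k$ converge in $L^1_{\loc}$ to a perimeter-minimizing cone $K$ and their boundaries converge in the Kuratowski sense to $\pa K$. The one-sided condition with respect to $\Sigma$ passes to the limit against the tangent hyperplane $T_x\Sigma$, so $\pa K$ lies entirely on one side of $T_x\Sigma$. Since $K$ is area-minimizing, the strong maximum principle for area-minimizing hypersurfaces forces $\pa K=T_x\Sigma$. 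This gives density $1/2$ for $E$ at $x$, and Allard's $\e$-regularity theorem then yields $x\in\pa^*E$, a contradiction. The main obstacle lies precisely in this last rigidity step: excluding a nontrivial area-minimizing cone sitting in a half-space is a substantive statement about singular minimal varieties and is the real engine of the touching-hypersurface criterion.
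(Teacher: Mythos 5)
Your proposal is correct and follows essentially the same route as the paper: (i) and (ii) are the classical De Giorgi--Tamanini almost-minimizer theory plus the first-variation bound $\|H_{\pa E}\|_\infty\leq\Lambda$ and Calder\'on--Zygmund, exactly as in the references the paper cites. For (iii), your blow-up argument and the rigidity step you flag (a nontrivial area-minimizing cone cannot sit in a half-space touching its boundary hyperplane) are precisely the paper's argument, where that rigidity is supplied by the cited result that any minimal cone contained in a half-space is a half-space.
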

Items (i) and (ii) are classical, see for instance Theorems~21.8~and~28.1 in \cite{maggi-book} for (i) and  Theorem~4.7.4 in \cite{Am} for (ii).

Concerning (iii) one can show that under the assumption on $x$ the minimal cone obtained by blowing up $E$ around $x$ is  contained in a half space. For the existence of such a minimal cone see Theorem~28.6 in \cite{maggi-book}. Since any minimal cone contained in a half space is a half space, see for instance \cite[Lemma~3]{DM}, it follows that  $x$ is a regular point.

\vskip 0.2cm
The so-called $\e$-regularity theory for $\Lambda$-minimizers underlying the proof of the above theorem yields  that sequences of $\Lambda$-minimizers $E_h$ converging in $L^1$ to a smooth set $E$ are regular for $h$ large and in fact converge in a stronger sense. More precisely, we have the following result, which is well known to the experts. 

\begin{theorem}\label{th:cicaleo}
Let $E_n$, $E$ be $(\Lambda,r_0)$-minimizers in an open set $\Om\subset\R^N$ such that $E_n\to E$ in $L^1_{loc}(\Om)$. Let $x\in\pa^*E\cap\Om$. Then, up to rotations and translations, there exist a $(N-1)$-dimensional open ball $B'\subset\R^{N-1}$, functions $\varphi_n,\varphi\in W^{2,p}(B')$ for all $p\geq1$, and $r>0$ such that $x\in B'\times(-r,r)$ and for $n$ large
\beq\label{cicaleo1}
\begin{split}
& \pa E_n\cap(B'\times(-r,r))=\{(x',\varphi_n(x')):\,x'\in B'\}, \\
& \pa E\cap(B'\times(-r,r))=\{(x',\varphi(x')):\,x'\in B'\}, \\
& \varphi_n\to\varphi\quad\text{in $C^{1,\alpha}(\overline{B'})$ for some $\alpha\in(0,1)$}\,.
\end{split}
\eeq
Moreover, $H_{\pa E_n}(x',\varphi_n(x'))\stackrel{*}{\wto}H_{\pa E}(x',\varphi(x'))$ in $L^\infty(B')$ and 
thus  $\varphi_n\wto\varphi$ in $W^{2,p}(B')$ for all $p\geq1$.
\end{theorem}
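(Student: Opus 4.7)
The plan is to combine De Giorgi--Tamanini $\varepsilon$-regularity with the compactness of $\Lambda$-minimizers from Theorem~\ref{th:compact}, and then to upgrade $C^{1,\alpha}$ graph convergence to weak-$W^{2,p}$ convergence through the prescribed mean curvature equation. Since $x\in\pa^*E\cap\Om$, up to a rotation and translation we may assume $x=0$ and $\nu_E(0)=e_N$. The De Giorgi excess $\mathbf{e}(E,0,\rho):=\rho^{1-N}\int_{\pa^*E\cap B_\rho}\tfrac12|\nu_E-e_N|^2\,d\H^{N-1}$ tends to $0$ as $\rho\to0^+$; fix $\rho>0$ small enough that $B_{4\rho}\subset\Om$, $4\rho<r_0$ and $\mathbf{e}(E,0,2\rho)+\Lambda\rho<\varepsilon_0/4$, where $\varepsilon_0=\varepsilon_0(N)$ is the threshold in the $\varepsilon$-regularity theorem for $(\Lambda,r_0)$-minimizers.

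The next step is to transport the smallness of the excess from $E$ to the $E_n$. By Theorem~\ref{th:compact}(iii)--(iv), $\pa E_n\to\pa E$ in the Kuratowski sense and $\H^{N-1}\res(\pa E_n\cap\Om)\wtos\H^{N-1}\res(\pa E\cap\Om)$. Pick $x_n\in\pa E_n$ with $x_n\to 0$. For a.e.\ $\rho'\in(\rho,2\rho)$ the $L^1_\loc$ convergence $E_n\to E$, together with the above weak-$*$ convergence (evaluated on radii for which the limit measure charges no sphere), gives convergence of both $\H^{N-1}(\pa^*E_n\cap B_{\rho'}(x_n))$ and $\int_{\pa^*E_n\cap B_{\rho'}(x_n)}\nu_{E_n}\,d\H^{N-1}$ to the corresponding quantities for $E$; hence $\mathbf{e}(E_n,x_n,\rho')+\Lambda\rho'<\varepsilon_0$ for $n$ large. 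Applying $\varepsilon$-regularity to both $E_n$ and $E$ and intersecting the resulting cylinders, one obtains a disk $B'\subset\R^{N-1}$, a height $r>0$, and uniformly bounded $\varphi_n,\varphi\in C^{1,\alpha_0}(\overline{B'})$ for some $\alpha_0\in(0,1)$ representing the boundaries as in \eqref{cicaleo1}. Arzel\`a--Ascoli then yields a subsequence $\varphi_n\to\varphi^*$ in $C^{1,\alpha}(\overline{B'})$ for every $\alpha\in(0,\alpha_0)$; comparing subgraphs with the $L^1_\loc$ limit $E$ forces $\varphi^*=\varphi$, and the uniqueness of the limit upgrades this to convergence of the full sequence.

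To pass to the limit in the mean curvatures, each $\varphi_n$ solves weakly
\[
-\Div\bigg(\frac{\nabla\varphi_n}{\sqrt{1+|\nabla\varphi_n|^2}}\bigg)=H_n\quad\text{in }B',\qquad H_n(x'):=H_{\pa E_n}(x',\varphi_n(x')),
\]
with $\|H_n\|_{L^\infty(B')}\leq\Lambda$ by Theorem~\ref{rm:regola}(ii). Banach--Alaoglu extracts a weak-$*$ limit $h\in L^\infty(B')$ along a subsequence. The $C^{1,\alpha}$ convergence of $\varphi_n$ gives $\nabla\varphi_n/\sqrt{1+|\nabla\varphi_n|^2}\to\nabla\varphi/\sqrt{1+|\nabla\varphi|^2}$ uniformly, so the left-hand sides converge distributionally to $-\Div(\nabla\varphi/\sqrt{1+|\nabla\varphi|^2})=H_{\pa E}(\cdot,\varphi(\cdot))$; uniqueness of the distributional limit identifies $h$ with $H_{\pa E}(\cdot,\varphi(\cdot))$ a.e., and since every weak-$*$ accumulation point agrees, the whole sequence converges weakly-$*$.

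Finally, the uniform $C^{1,\alpha}$ bound on $\varphi_n$ makes the prescribed mean curvature operator a uniformly elliptic quasilinear operator with $C^{0,\alpha}$ coefficients; combined with the uniform $L^\infty$ bound on $H_n$, standard $L^p$ elliptic regularity yields uniform $W^{2,p}$ bounds on $\varphi_n$ for every $p\in[1,\infty)$. Reflexivity of $W^{2,p}$ and the uniqueness of the limit already identified above then give $\varphi_n\wto\varphi$ weakly in $W^{2,p}(B')$. The main obstacle I anticipate is the second step: transferring the smallness of the excess from $E$ to $E_n$ at the nearby centres $x_n$. This is precisely what the weak-$*$ convergence of the perimeter measures in Theorem~\ref{th:compact}(iv), combined with $L^1_\loc$ convergence of the sets themselves and a generic choice of radius $\rho'$ at which the limit measure charges no sphere, is designed to deliver.
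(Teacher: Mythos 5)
Your proposal is correct and takes essentially the same route as the paper, which also obtains \eqref{cicaleo1} from the classical $\varepsilon$-regularity theory (Tamanini, and the arguments of Cicalese--Leonardi) and then deduces the curvature and $W^{2,p}$ statements from the bound $\|H_{\pa E_n}\|_{L^\infty}\leq\Lambda$ of Theorem~\ref{rm:regola}-(ii) combined with Calder\'on--Zygmund estimates; your distributional identification of the weak-$*$ limit of the mean curvatures is exactly the step the paper leaves implicit. Only a cosmetic adjustment is needed: passing from the excess at radius $2\rho$ to radius $\rho'\in(\rho,2\rho)$ costs a factor $2^{N-1}$, so the threshold $\varepsilon_0/4$ should be replaced by a dimensional constant, which is harmless since the excess vanishes as $\rho\to0$.
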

Properties stated in \eqref{cicaleo1} follow from the classical $\e$-regularity theory, see \cite[Th. 1.9]{Tam} (see also  the arguments of Lemma~3.6 in \cite{CL}). The last part of the statement then easily follows from Theorem~\ref{rm:regola}-(ii) combined with the classical Calder\'on-Zygmund estimates.


\section{An estimate of the total positive curvature}\label{sec:tot}

This section is mainly devoted to the proof of Theorem~\ref{th:cgr-main1} and to some applications.

We recall that a set $X\subset \S^{N-1}$ is called {\em spherically convex} (in short {\em convex}) if it is geodesically convex, that is, for any pair of points $x_1$, $x_2\in X$ there exists a distance minimizing geodesic connecting $x_1$ and $x_2$ contained in $X$. 

If $x\in\S^{N-1}$ and $\theta\in (0,\pi)$ we denote by $S_{\theta, x}$ the spherical cap
$$
 S_{\theta, x}:=\{y\in \S^{N-1}:\, x\cdot y\geq \cos\theta\}\,.
$$ 
If $x=e_N$ we shall simply write $S_\theta$ instead of $S_{\theta,e_N}$. 
Note that $S_{\pi-\theta, -x}$ coincides with $(\S^{N-1}\setminus S_{\theta, x})\cup\pa S_{\theta,x}$, where $\pa S_{\theta,x} $ denotes the relative boundary of $S_{\theta, x}$ in $\S^{N-1}$. We recall that 
$$
\H^{N-1}(S_{\theta})=(N-1)\omega_{N-1}\int_0^\theta\sin^{N-2}\sigma\,d\sigma\,,
$$
and $\H^{N-1}(\S^{N-1})=N\omega_N$, where $\omega_N$ is the measure of the unit ball. 

The following lemma extends \cite[Proposition~3.1]{cgr0} to general angles.
\begin{lemma}\label{lm:spheco}
	Let $X\subset\S^{N-1}$ be spherically convex and closed, with  $\H^{N-1}(X)>0$,  let $\theta\in (0,\pi)$ and fix $x\in X$. Then  we have 
	\beq\label{spheco1}
	\H^{N-1}(X\cap S_{\theta, x})\geq \frac{\H^{N-1}(S_\theta)}{N\omega_N} \H^{N-1} (X)\,.
	\eeq
	Moreover, the equality holds  if and only if $-x\in X$. Finally,  given $\theta_0\in(0,\pi)$, for every $\e>0$ there exists $\de>0$, independent of $X$, such that if $\theta\in[\theta_0/2,\theta_0]$, then
  	$$
	\H^{N-1}(X\cap S_{\theta, x})\leq \Big( \frac{\H^{N-1}(S_\theta)}{N\omega_N}+\de\Big) \H^{N-1} (X)\quad\text{implies }\quad \dist(-x, X)\leq \e\,.
	$$
\end{lemma}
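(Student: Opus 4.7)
The plan is to pass to geodesic polar coordinates centered at $x$ and reduce the entire problem to a one-dimensional pointwise inequality. Identifying $\S^{N-2}$ with the unit tangent sphere at $x$, I would define the radial function $\rho\colon\S^{N-2}\to[0,\pi]$ by $\rho(v):=\sup\{t\in[0,\pi]:\exp_x(tv)\in X\}$ (with $\sup\emptyset=0$); spherical convexity together with $x\in X$ forces the whole segment $\{\exp_x(tv):0\le t\le\rho(v)\}$ to lie in $X$. Using the polar volume element $\sin^{N-2}(t)\,dt\,d\H^{N-2}(v)$ and setting $f(s):=\int_0^s\sin^{N-2}(t)\,dt$, this gives
$$
\H^{N-1}(X)=\int_{\S^{N-2}}f(\rho(v))\,d\H^{N-2}(v),\qquad \H^{N-1}(X\cap S_{\theta,x})=\int_{\S^{N-2}}f(\min(\rho(v),\theta))\,d\H^{N-2}(v),
$$
and the ratio $\H^{N-1}(S_\theta)/(N\omega_N)$ becomes exactly $f(\theta)/f(\pi)=:C$.

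With this reduction, inequality \eqref{spheco1} follows from the elementary pointwise bound $f(\min(s,\theta))\ge C f(s)$ for $s\in[0,\pi]$: when $s\le\theta$ it reduces to $C\le 1$; when $s>\theta$ it reduces to $f(s)\le f(\pi)$. For the equality case I would analyze the nonnegative defect $d(v):=f(\min(\rho(v),\theta))-Cf(\rho(v))$. Since $f$ is strictly increasing on $[0,\pi]$ and $\theta\in(0,\pi)$, both of the above estimates are strict unless $\rho(v)\in\{0,\pi\}$; hence $d\equiv 0$ a.e.\ forces $\rho\in\{0,\pi\}$ a.e., and since $\H^{N-1}(X)>0$ the level set $\{\rho=\pi\}$ has positive $\H^{N-2}$ measure, so $-x\in X$. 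Conversely, if $-x\in X$, then for any $y\in X$ the two convexity geodesics $[x,y]$ and $[y,-x]$ both lie in $X$; they live on the common great circle through $\pm x$ and $y$, so they concatenate into a great semicircle in $X$, forcing $\rho(v)\in\{0,\pi\}$ everywhere.

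For the stability statement I would proceed by contradiction, selecting sequences $X_n$, $x_n$, $\theta_n\in[\theta_0/2,\theta_0]$ satisfying the hypothesis with $\delta_n\to 0$ but $\dist(-x_n,X_n)>\e$. Rotating so that $x_n=e_N$, the distance bound becomes the ceiling $\rho_n(v)\le\pi-\e$ on all of $\S^{N-2}$ (or $\pi-\e'$ for some $\e'=\e'(\e)$ if $\dist$ denotes the Euclidean distance). The key estimate is a uniform pointwise lower bound $d_n(v)\ge c_0(\theta_0,\e)f(\rho_n(v))$, established in the two regimes: if $\rho_n\le\theta_n$ then $d_n/f(\rho_n)=1-C_n\ge 1-f(\theta_0)/f(\pi)>0$; if $\theta_n<\rho_n\le\pi-\e$ then $d_n/f(\rho_n)=f(\theta_n)\bigl(1/f(\rho_n)-1/f(\pi)\bigr)\ge f(\theta_0/2)\bigl(1/f(\pi-\e)-1/f(\pi)\bigr)>0$. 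Integrating and combining with the defect inequality $\int d_n\,d\H^{N-2}\le\delta_n\H^{N-1}(X_n)$ yields $c_0\le\delta_n\to 0$, the desired contradiction.

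The delicate point I anticipate is the converse direction of the equality case, where spherical convexity only guarantees \emph{some} distance-minimizing geodesic between two points—nonunique for antipodal pairs—so the great-semicircle argument through an intermediate $y\in X$ must be stated with care. By contrast, the stability part turns out to be a clean quantification of the same pointwise analysis, needing neither a compactness argument on the sets $X_n$ nor an a priori lower bound on $\H^{N-1}(X_n)$.
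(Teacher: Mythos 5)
Your reduction to polar coordinates is a genuinely different route from the paper's: there the cap $S_{\theta,x}$ is split into the geodesic cone $A$ over $X\cap\pa S_{\theta,x}$ with vertex $x$ and the companion cone $A^-$ with vertex $-x$, the exact proportion $\H^{N-1}(A^-)/\H^{N-1}(A)=\H^{N-1}(S_{\pi-\theta,-x})/\H^{N-1}(S_{\theta,x})$ is combined with $X\cap B^-=\emptyset$, and the stability statement is obtained by a compactness/contradiction argument on sequences $X_n$, $A_n$, $A_n^-$. Your stability argument is in fact cleaner and more quantitative: since $\dist(-x,X)\geq\e$ excludes the antipode, every ray is genuinely filled up to $\rho(v)\leq\pi-\e'$, the pointwise defect bound $d(v)\geq c_0(\theta_0,\e)\,f(\rho(v))$ is legitimate and uniform in $\theta\in[\theta_0/2,\theta_0]$, and integrating gives an explicit admissible $\de$ (any $\de<c_0$), with no compactness needed. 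That part is correct.

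The gap is the unconditional claim that convexity forces $\{\exp_x(tv):0\le t\le\rho(v)\}\subset X$, and hence the identities $\H^{N-1}(X)=\int_{\S^{N-2}}f(\rho)\,d\H^{N-2}$ and $\H^{N-1}(X\cap S_{\theta,x})=\int_{\S^{N-2}}f(\min(\rho,\theta))\,d\H^{N-2}$. With $\rho(v)=\sup\{t\in[0,\pi]:\exp_x(tv)\in X\}$ this fails exactly when $-x\in X$: then $\rho\equiv\pi$, while a given direction may meet $X$ only at the two endpoints $x,-x$ (uniqueness of minimizing geodesics is lost only for antipodal pairs, but that is enough). Concretely, for $N=3$ take $X$ a lune, i.e.\ the union of the meridians from $x$ to $-x$ through a proper arc of the equator: it is closed, spherically convex, of positive measure, yet $\int f(\rho)\,d\H^{N-2}=f(\pi)\H^{N-2}(\S^{N-2})=N\omega_N\neq\H^{N-1}(X)$, so both identities are false, and your derivation of \eqref{spheco1} and of the equality analysis is incomplete precisely in the case the lemma is about. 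You anticipated the antipodal delicacy only for the converse implication, but it already undermines the basic polar-coordinate formulas. The repair is within what you wrote: either define $\rho(v)$ as the supremum over $t<\pi$ (the single point $t=\pi$ carries no measure in the radial integral), or split according to whether $-x\in X$. If $-x\notin X$, closedness gives $\rho(v)<\pi$, the segments do fill in, and your inequality, forward equality implication and stability all go through verbatim. If $-x\in X$, your own concatenation argument (the geodesics from $x$ to $y$ and from $y$ to $-x$ lie on one great circle) shows every ray is either a full semicircle contained in $X$ or meets $X$ only at $\pm x$; integrating over the first family of directions gives directly $\H^{N-1}(X\cap S_{\theta,x})=\frac{\H^{N-1}(S_\theta)}{N\omega_N}\H^{N-1}(X)$, i.e.\ the equality case. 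With this case distinction made explicit the proof is complete.
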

\begin{proof} 
We denote by $A$ the spherically convex subset of $S_{\theta, x}$ obtained by taking the union of all the minimal geodesics connecting $x$ with the points of $X\cap \pa S_{\theta, x}$. Let $B:=S_{\theta, x}\setminus A$. 
Similarly denote by $A^-$ the spherically convex subset of $S_{\pi-\theta, -x}$ obtained by taking the union of all the minimal geodesics connecting $-x$ with the points of $X\cap \pa S_{\theta, x}= X\cap \pa S_{\pi-\theta, -x} $, and set $B^-:=S_{\pi-\theta, -x}\setminus A^-$. 

Assume first that $\H^{N-1}(A)>0$. We note that 
$$
\frac{\H^{N-1}(A^-)}{\H^{N-1}(A)}=\frac{\H^{N-1}(S_{\pi-\theta,-x})}{\H^{N-1}(S_{\theta,x})}\,.
$$
Thus, we have
$$
\H^{N-1}(X\cap A)=\H^{N-1}(A)= \frac{\H^{N-1}(S_{\theta,x})}{\H^{N-1}(S_{\pi-\theta,-x})}\H^{N-1}(A^-)\geq \frac{\H^{N-1}(S_{\theta,x})}{\H^{N-1}(S_{\pi-\theta,-x})}\H^{N-1}(X\cap A^-)\,.
$$	 
Note now that $X\cap B^-=\emptyset$. Indeed, if $y\in X\cap B^-$, then the geodesic connecting $y$ to $x$ is contained in $X$ and intersects $\pa S_{\theta,x}$ at a point $z\in X\cap \pa S_{\theta, x}$. It follows in turn that $y $ belongs to the geodesic connecting $z$ with $-x$, and thus $y\in A^-$, which is a contradiction. Therefore,
\beq\label{spheco2}
\begin{split}
\H^{N-1}(X\cap S_{\theta, x}) & =\H^{N-1}(A)+\H^{N-1}(X\cap B)\geq \H^{N-1}(A)\\
&= \frac{\H^{N-1}(S_{\theta,x})}{\H^{N-1}(S_{\pi-\theta,-x})}\H^{N-1}(A^-)\\
&\geq
\frac{\H^{N-1}(S_{\theta,x})}{\H^{N-1}(S_{\pi-\theta,-x})}\H^{N-1}(X\cap S_{\pi-\theta, -x})\,.
	\end{split}
	\eeq
  From this inequality \eqref{spheco1} follows, recalling that $\H^{N-1}(\S^{N-1})=N\omega_N$. 
  
  If instead $\H^{N-1}(A)=0$, then $\H^{N-1}(X\setminus S_{\theta,x})=0$ and thus \eqref{spheco1} holds trivially.
  
  If \eqref{spheco1} holds with the equality, then $\H^{N-1}(A)>0$ and all the inequalities in \eqref{spheco2} are equalities. In particular, 
  $\H^{N-1}(A^-)=\H^{N-1}(X\cap S_{\pi-\theta, -x})>0 $. In turn, by closedness and convexity we deduce 
  {that} $-x\in X$. Conversely, if $-x\in X$ then by spherical convexity we have $A^-= X\cap S_{\pi-\theta, -x} $ and also $X\cap B=\emptyset$ since otherwise any geodesic connecting a point  $y\in X\cap B$ to $-x$ would intersect $\pa S_{\theta, x}\cap X$, thus implying that $y$ belongs to $A$,  a contradiction. Therefore all the inequalities in \eqref{spheco2} are equalities and the conclusion follows. 
  
  To establish the last part, we argue by contradiction assuming that there exist $\e>0$, a sequence of closed spherically convex sets $X_n\ni x$ such that $\H^{N-1}(X_n)>0$ and a sequence $\theta_n\in[\theta_0/2,\theta_0] $ converging to $\theta'$ such that
\beq\label{sphecoabs}
	\H^{N-1}(X_n\cap S_{\theta_n, x})\leq \Big(\frac{\H^{N-1}(S_{\theta_n})}{N\omega_N}+\frac1n\Big) \H^{N-1} (X_n)\quad\text{but }\quad \dist(-x, X_n)\geq \e\,.
\eeq
We denote by $A_n$ and by $A^-_n$ the sets corresponding to $X_n$ and $S_{\theta_n,x}$ defined as above. 
Note that  $X_n=(X_n\cap S_{\theta_n, x})\cup(X_n\cap A^-_n)$. From \eqref{sphecoabs} it follows that $\H^{N-1}(A_n),\,\H^{N-1}(A_n^-)>0$ for $n$ large and
$$
\frac{\H^{N-1}(X_n\cap S_{\theta_n, x})}{\H^{N-1}(X_n\cap A^-_n)}\leq  \frac{\H^{N-1}(S_{\theta_n,x})}{\H^{N-1}(S_{\pi-\theta_n,-x})}+O(\tfrac1n)\,.
$$
Since $\H^{N-1}(X_n\cap S_{\theta_n, x})\geq \H^{N-1}(A_n)\geq  \frac{\H^{N-1}(S_{\theta_n,x})}{\H^{N-1}(S_{\pi-\theta_n,-x})}\H^{N-1}(X_n\cap A^-_n) $, it follows that 
\beq\label{spheco4}
\lim_{n\to\infty} \frac{\H^{N-1}(X_n\cap S_{\theta_n, x})}{\H^{N-1}(X_n\cap S_{\pi-\theta_n, -x})}=\lim_{n\to\infty}\frac{\H^{N-1}(A_n)}{\H^{N-1}(X_n\cap A^-_n)}=\frac{\H^{N-1}(S_{\theta',x})}{\H^{N-1}(S_{\pi-\theta',-x})}\,.
\eeq
Note that  we have
$$
\frac{\H^{N-1}(S_{\theta_n,x})}{\H^{N-1}(S_{\pi-\theta_n,-x})}= \frac{\H^{N-1}(A_n)}{\H^{N-1}( A^-_n)}\to\frac{\H^{N-1}(S_{\theta',x})}{\H^{N-1}(S_{\pi-\theta',-x})}
$$
and thus, from \eqref{spheco4} we get
$$
\lim_{n\to\infty}\frac{\H^{N-1}(A^-_n)}{\H^{N-1}(X_n\cap A^-_n)}=1\,,
$$
which clearly contradicts the fact that by the second inequality in \eqref{sphecoabs} we easily infer that $\H^{N-1}(A_n^-\setminus X_n)\geq C(\e)\H^{N-1}(A_n^-)$, for a positive constant $C(\e)$ depending only in $\e$.
 \end{proof}

Next we adapt to our case \cite[Proposition~4.2]{cgr0}. To this aim we recall some preliminary definitions. 
\begin{definition}\label{normalcones}
Given a set $X\subset\R^N$ and $x\in \R^N$ the {\em unit normal cone} of $X$ at $x$ is the (possibly empty) set defined as
$$
N_xX:=\{\nu\in \S^{N-1}:\, (y-x)\cdot \nu\leq 0\text{ for all }y\in X\}\,.
$$
Any hyperplane passing through $x$ and orthogonal to a direction $\nu\in N_xX$ is called a {\em support hyperplane for $X$ with outward normal $\nu$}. 
In turn, we define the corresponding {\em normal bundle} of $X$ as 
$$ 
NX:=\bigcup_{x\in X}N_xX\,.
$$
Given a map $\sigma: X\to \S^{N-1}$ and $\theta\in (0, \pi)$ we introduce the following {\em restricted normal cone} and {\em restricted normal bundle} respectively as
$$
N_x^{\sigma, \theta} X:=N_xX\cap S_{\theta, \sigma(x)}\qquad\text{and}\qquad
N^{\sigma, \theta} X:=\bigcup_{x\in X} N_x^{\sigma, \theta} X\,.
$$
Moreover, we say that a point $x\in X$ is {\em exposed} if there exists a support hyperplane $\Pi$ passing through $x$ such that $X\cap \Pi=\{x\}$. Finally, we denote by $\wid(X)$ the distance between the closest pair of parallel hyperplanes which contains $X$ in between them, i.e.,
\beq\label{eqwid}
\wid(X)=\inf_{\nu\in\S^{N-1}}\big(\sup\{x\cdot\nu:\,x\in X\}-\inf\{x\cdot\nu:\,x\in X\}\big)\,.
\eeq
\end{definition}
\begin{lemma}\label{lm:wid}
Let $r>0$ and let $X=\{x_1,\dots, x_k\}\subset B_r(0)$. Let $\sigma: X\to \S^{N-1}$ be such that $\sigma(x_i)\in N_{x_i}X$ whenever $N_{x_i}X$ is nonempty. Then
\beq\label{wid1}
\H^{N-1}(N^{\sigma, \theta} X)\geq \H^{N-1}(S_{\theta})\,.
\eeq
Moreover, equality holds in \eqref{wid1} if and only if $X$ lies in a hyperplane $\Pi$ such that $\sigma(x_i)\perp \Pi$ whenever $x_i$ is exposed. Finally, given $\theta_0\in(0,\pi)$, for every $\e>0$ there exists $\de>0$ (depending also on $r>0$ and $\theta_0$, but not on $\sigma$ and not on $X$)  such that if $\theta\in[\theta_0/2,\theta_0]$, then
\beq\label{wid2}
\H^{N-1}(N^{\sigma, \theta}X)\leq \H^{N-1}(S_{\theta})+\de\quad\text{implies}\quad \wid(X)\leq \e\,
\eeq   
and more precisely  there exist an exposed point $x\in X$ and  two parallel hyperplanes orthogonal to $\sigma(x)$ with mutual distance equal to $\e$  such that $X$ lies between them. 
\end{lemma}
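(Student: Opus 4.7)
The strategy is to decompose the unit sphere into the normal cones at the points of $X$ and apply Lemma~\ref{lm:spheco} to each piece. First I would verify that the cones $\{N_{x_i}X\}_{i=1}^k$ form an essentially disjoint cover of $\mathbb{S}^{N-1}$: for every $\nu\in\mathbb{S}^{N-1}$ the linear functional $y\mapsto y\cdot\nu$ attains its maximum on the finite set $X$ at some vertex $x_i$, placing $\nu\in N_{x_i}X$; distinct cones can overlap only where the maximum is attained at two or more points, which is an $\mathcal{H}^{N-1}$-negligible union of lower-dimensional great subspheres. Consequently $\sum_i\mathcal{H}^{N-1}(N_{x_i}X)=N\omega_N$. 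Since each $N_{x_i}X$ is closed and spherically convex as the intersection of a convex cone with the sphere, Lemma~\ref{lm:spheco} applied to $N_{x_i}X$ with the point $\sigma(x_i)\in N_{x_i}X$ gives
$$
\mathcal{H}^{N-1}(N_{x_i}^{\sigma,\theta}X)\geq \frac{\mathcal{H}^{N-1}(S_\theta)}{N\omega_N}\mathcal{H}^{N-1}(N_{x_i}X).
$$
Summing and using that the sets $N_{x_i}^{\sigma,\theta}X$ inherit essential disjointness yields \eqref{wid1}.

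For the equality case, equality in \eqref{wid1} forces equality in each instance of Lemma~\ref{lm:spheco} above, in particular at every exposed vertex $x_i$ (equivalently every $x_i$ with $\mathcal{H}^{N-1}(N_{x_i}X)>0$). The equality characterization in Lemma~\ref{lm:spheco} then gives $-\sigma(x_i)\in N_{x_i}X$; combined with $\sigma(x_i)\in N_{x_i}X$ this forces $(y-x_i)\cdot\sigma(x_i)=0$ for every $y\in X$, so $X$ lies in the hyperplane through $x_i$ orthogonal to $\sigma(x_i)$. The reverse implication follows by direct inspection from the definitions, since $X\subset\Pi$ with $\sigma(x_i)\perp\Pi$ immediately places both $\pm\sigma(x_i)$ in $N_{x_i}X$.

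For the stability part \eqref{wid2} I would argue by contradiction. Given $\varepsilon>0$, suppose there exist $X_n\subset B_r(0)$, maps $\sigma_n$, and angles $\theta_n\in[\theta_0/2,\theta_0]$ satisfying $\mathcal{H}^{N-1}(N^{\sigma_n,\theta_n}X_n)\leq \mathcal{H}^{N-1}(S_{\theta_n})+1/n$ while $\wid(X_n)>\varepsilon$. Set $a_i^n:=\mathcal{H}^{N-1}(N_{x_i^n}X_n)$ and
$$
b_i^n:=\mathcal{H}^{N-1}(N_{x_i^n}^{\sigma_n,\theta_n}X_n)-\frac{\mathcal{H}^{N-1}(S_{\theta_n})}{N\omega_N}\,a_i^n\geq 0,
$$
so that $\sum_i a_i^n=N\omega_N$ and $\sum_i b_i^n\leq 1/n$. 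A pigeonhole argument then produces an exposed $x_n^*\in X_n$ whose \emph{relative} excess satisfies $b_{i^*}^n/a_{i^*}^n\leq 1/(nN\omega_N)\to 0$: otherwise $b_i^n>a_i^n/(nN\omega_N)$ for every $i$ would give $\sum_i b_i^n>1/n$. The stability part of Lemma~\ref{lm:spheco}, applicable since $\theta_n\in[\theta_0/2,\theta_0]$ stays away from $0$ and $\pi$, furnishes $\nu_n\in N_{x_n^*}X_n$ with $|\nu_n+\sigma_n(x_n^*)|\to 0$. For any $y\in X_n$ the inequalities $(y-x_n^*)\cdot\sigma_n(x_n^*)\leq 0$ and $(y-x_n^*)\cdot\nu_n\leq 0$ combine with $|y-x_n^*|\leq 2r$ to give $-2r|\nu_n+\sigma_n(x_n^*)|\leq(y-x_n^*)\cdot\sigma_n(x_n^*)\leq 0$, placing $X_n$ in a slab of vanishing width orthogonal to $\sigma_n(x_n^*)$. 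This contradicts $\wid(X_n)>\varepsilon$ and delivers simultaneously the ``more precisely'' refinement, since $x_n^*$ is exposed and the bounding hyperplanes are normal to $\sigma_n(x_n^*)$. The main obstacle in this argument is the pigeonhole step: what must be made small is the \emph{relative} excess $b_i/a_i$, not the absolute excess $b_i$, so that the quantitative estimate of Lemma~\ref{lm:spheco} can be invoked at a vertex whose normal cone is not negligible.
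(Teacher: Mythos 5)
Your proof is correct and follows essentially the same route as the paper: an essentially disjoint decomposition of $\S^{N-1}$ into the normal cones $N_{x_i}X$, a termwise application of Lemma~\ref{lm:spheco} (including its equality and stability parts), and a contradiction argument for \eqref{wid2} in which a pigeonhole selection of a vertex with small \emph{relative} excess yields $\dist(-\sigma(x^*),N_{x^*}X)\to 0$ and hence a thin slab orthogonal to $\sigma(x^*)$ containing $X$. The only differences are cosmetic: you justify spherical convexity of the cones and the pigeonhole/slab estimates explicitly, where the paper cites \cite[Lemma~4.1]{cgr0} and is terser.
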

\begin{proof}
The proof is essentially the same as for \cite[Proposition~4.2]{cgr0}, using Lemma~\ref{lm:spheco} in place of \cite[Proposition~3.1]{cgr0}. We give the argument for the sake of completeness.  Owing to the compactness of $X$, for every $\nu\in \S^{N-1}$ there exists a support hyperplane to $X$ with outward normal  equal to $\nu$. Thus, $NX=\S^{N-1}$. Observe also that $\nu\in \mathrm{int}_{\S^{N-1}} (N_{x_i}X)$ if and only if  the hyperplane orthogonal to $\nu$ and passing through $x_i$ is a support  hyperplane intersecting $X$ only at $x_i$ (and thus $x_i$ is exposed). In turn, if $i\neq j$ we have
$$
\mathrm{int}_{\S^{N-1}}(N_{x_i}X)\cap \mathrm{int}_{\S^{N-1}}(N_{x_j}X)=\emptyset\,.
$$ 
 Since by \cite[Lemma~4.1]{cgr0} every $N_{x_i}X$ with nonvanishing $\H^{N-1}$-measure is spherically convex, we may invoke Lemma~\ref{lm:spheco} to conclude that 
 \beq\label{wid2.5}
 \begin{split}
  \H^{N-1}(N^{\sigma, \theta}X)=\sum_{i=1}^k\H^{N-1}(N^{\sigma, \theta}_{x_i}X)\geq \frac{\H^{N-1}(S_{\theta})}{N\omega_N} \sum_{i=1}^k\H^{N-1}(N_{x_i}X)=\H^{N-1}(S_{\theta})\,,
  \end{split}
 \eeq
  thus establishing \eqref{wid1}.
  
  If equality holds in \eqref{wid1}, then the above inequality is an equality and in particular 
 \beq\label{wid3}
 \H^{N-1}(N^{\sigma, \theta}_{x_i}X)=\frac{\H^{N-1}(S_{\theta})}{N\omega_N} \H^{N-1}(N_{x_i}X)
   \eeq  
   whenever $\H^{N-1}(N_{x_i}X)>0$, that is whenever $x_i$ is exposed. Therefore, by Lemma~\ref{lm:spheco} $N^{\sigma, \theta}_{x_i}X$ contains both $\sigma (x_i)$ and $-\sigma(x_i)$ and thus $X$ lies in the hyperplane orthogonal to $\sigma(x_i)$ and passing through $x_i$. Conversely, if $X$ lies in a hyperplane orthogonal to $\sigma(x_i)$, for every $x_i$ exposed, then also $-\sigma(x_i)\in N_{x_i}X$ and thus by Lemma~\ref{lm:spheco} \eqref{wid3} holds for all $x_i$ exposed. And thus equality holds also in \eqref{wid2.5}.
   
   To prove \eqref{wid2} and the last part of the lemma,  let  $X_n=\{x^n_1,\dots, x^n_{k_n}\}\subset B_r(0)$ and let  $\sigma_n:X_n\to \S^{N-1}$, with $\sigma_n(x^n_i)\in N_{x^n_i}X_n$ whenever $x_i^n$ is exposed, $\theta_n\in[\theta_0/2,\theta_0]$  be  such that
$$
   \H^{N-1}(N^{\sigma_n, \theta_n}X_n)- \H^{N-1}(S_{\theta_n})\to0\,.
 $$
   Arguing as for \eqref{wid2.5} we then have, in particular, that for every $n\in \N$ there exists $i_n\in \{1, \dots, k_n\}$ such that 
   $$
   \frac{\H^{N-1}(N^{\sigma_n, \theta_n}_{x_{i_n}^n}X_n)}{\H^{N-1}(N_{x^n_{i_n}}X_n)}- \frac{\H^{N-1}(S_{\theta_n})}{N\omega_N}\to0\,.
$$
By Lemma~\ref{lm:spheco} this implies that $\dist(-\sigma_n(x^n_{i_n}), N_{x_{i_n}^n}X_n)\to 0$. From this, owing to the equiboundedness of the $X_n$'s it follows that for every $k\in \N$ and for $n$ large enough  $ X_n$ lies between the two parallel hyperplanes orthogonal to $\sigma_n(x^n_{i_n})$ and passing through the points $x^n_{i_n}$ and   $x^n_{i_n}-\frac1k \sigma_n(x^n_{i_n}) $.  In particular, $\wid(X_n)\to 0$.
\end{proof}




Next proposition extends the previous lemma to the case of a general compact set $X$ and a continuous map $\sigma$.

\begin{proposition}\label{prop:53}
Let $X\subset B_r(0)$ be a compact set.
 Let $\sigma: X\to \S^{N-1}$ be a continuous map such that $\sigma(x)\in N_x X$ for all $x\in X$ such that $N_x X\not=\emptyset$. Then, 
\beq\label{531}
\H^{N-1}(N^{\sigma, \theta}X)\geq \H^{N-1}(S_{\theta})
\eeq
and if equality holds, then $X$ lies in a hyperplane $\Pi$ which is orthogonal to $\sigma(x)$ for some  $x\in X$.
Moreover, given $\theta_0\in(0,\pi)$ and $\e>0$ there exists $\de_0>0$ (depending also on $r$ and $\theta_0$, but not on $\sigma$ and not on $X$)  such that if $\theta\in[\theta_0/2,\theta_0]$, then
\beq\label{540}
\H^{N-1}(N^{\sigma, \theta}X)\leq \H^{N-1}(S_{\theta})+\de_0\quad\text{implies}\quad \wid(X)\leq \e
\eeq   
and more precisely  there exist  $x\in X$ and  two parallel hyperplanes orthogonal to $\sigma(x)$, with mutual distance equal to $\e$  such that $X$ lies between them.
\end{proposition}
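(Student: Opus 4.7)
The plan is to reduce to the finite case treated by Lemma~\ref{lm:wid} via approximation by finite subsets of $X$, and to pass to the limit using the continuity of $\sigma$ together with an upper semicontinuity property of normal cones. First I would record two facts about the compact set $X$: (a) $NX=\S^{N-1}$, since for each $\nu\in\S^{N-1}$ the linear functional $y\mapsto y\cdot\nu$ attains its maximum on $X$, yielding a point with $\nu$ in its normal cone; (b) setting $\partial^\ast X := \{x\in X : N_x X\neq\emptyset\}$, one has $\mathrm{conv}(\partial^\ast X) = \mathrm{conv}(X)$, because every extreme point of $\mathrm{conv}(X)$ lies in $\partial^\ast X$ and Krein--Milman applies.

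Next I would pick finite subsets $X_n\subset \partial^\ast X$ that become Hausdorff-dense in $\partial^\ast X$ as $n\to\infty$. For each $x\in X_n\subset X$ the inclusion $N_x X\subset N_x X_n$ holds trivially, so $\sigma(x)\in N_x X\subset N_x X_n$ and Lemma~\ref{lm:wid} applied to $X_n$ with $\sigma|_{X_n}$ yields $\H^{N-1}(N^{\sigma|_{X_n},\theta}X_n)\geq \H^{N-1}(S_\theta)$. The crucial analytic step is then the Kuratowski-type inclusion
\[
\limsup_{n}\, N^{\sigma|_{X_n},\theta}X_n \,\subset\, N^{\sigma,\theta}X.
\]
To see this, given a convergent sequence $\nu_n\in N_{x_n}X_n$ with $\sigma(x_n)\cdot\nu_n\geq\cos\theta$, compactness of $X$ extracts a limit $x\in X$, continuity of $\sigma$ gives $\sigma(x)\cdot\nu\geq\cos\theta$, and passing to the limit in $(y-x_n)\cdot\nu_n\leq 0$ along suitably chosen $y_n\in X_n$ yields $(y-x)\cdot\nu\leq 0$ first for every $y\in\partial^\ast X$ (by density) and then, by convexity, for every $y\in X\subset\mathrm{conv}(\partial^\ast X)$. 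Reverse Fatou applied to indicator functions then gives $\H^{N-1}(N^{\sigma,\theta}X)\geq \limsup_n\H^{N-1}(N^{\sigma|_{X_n},\theta}X_n)\geq \H^{N-1}(S_\theta)$, which is \eqref{531}.

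For the equality case, the above chain forces $\H^{N-1}(N^{\sigma|_{X_n},\theta}X_n)\to \H^{N-1}(S_\theta)$. The quantitative last part of Lemma~\ref{lm:wid} then supplies, for every $\e>0$ and $n$ large, an exposed point $x_n^\ast\in X_n$ and two parallel hyperplanes orthogonal to $\sigma(x_n^\ast)$ at distance $\e$ apart which sandwich $X_n$. Along a subsequence $x_n^\ast\to x^\ast\in X$, the Hausdorff convergence $\mathrm{conv}(X_n)\to\mathrm{conv}(X)$ transfers the sandwich property to $X$, and letting $\e\to 0$ places $X$ on a hyperplane orthogonal to $\sigma(x^\ast)$. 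The quantitative stability \eqref{540} will be proved by a contradiction-and-diagonalization argument: if it failed, one would extract sequences $X_n$, $\sigma_n$, $\theta_n\in[\theta_0/2,\theta_0]$ violating the conclusion; approximating each $X_n$ by finite $X_{n,m}\subset\partial^\ast X_n$ and invoking the stability of Lemma~\ref{lm:wid} (whose $\delta$ depends only on $\e,\theta_0,r$, not on the set or on $\sigma$) delivers a width bound on $X_{n,m}$ for a well-chosen $m=m(n)$, which transfers to $X_n$ via $\mathrm{conv}(X_{n,m})\to\mathrm{conv}(X_n)$, contradicting the choice of $\e$.

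The main obstacle I anticipate is the correct choice of the approximating sets $X_n$ so that the hypothesis $\sigma(x)\in N_x X_n$ of Lemma~\ref{lm:wid} is preserved: a naive $\rho$-dense sampling of all of $X$ could include points $x$ with $N_x X=\emptyset$ but $N_x X_n\neq\emptyset$, for which the hypothesis on $\sigma$ gives no information and Lemma~\ref{lm:wid} does not apply. Restricting the sampling to $\partial^\ast X$ removes this difficulty, while the identity $\mathrm{conv}(\partial^\ast X)=\mathrm{conv}(X)$ ensures that no geometric information about $X$ is lost in the limit $n\to\infty$.
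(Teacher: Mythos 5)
Your proposal is correct and follows the same overall strategy as the paper: approximate by finite subsets, apply Lemma~\ref{lm:wid} to them, pass to the limit through an upper semicontinuity (Kuratowski-type) inclusion of the restricted normal bundles plus reverse Fatou, and obtain the equality and stability statements by compactness and the continuity of $\sigma$ (with $\de_0=\de/2$, $\de$ being the constant of Lemma~\ref{lm:wid}, exactly as in the paper; your contradiction-and-diagonalization phrasing of \eqref{540} is just a repackaging of the paper's direct argument). The one genuine difference is your choice of approximating sets: the paper takes arbitrary discrete sets $X_i\subset X$ converging to $X$ in the Hausdorff sense and invokes Lemma~\ref{lm:wid} for them without comment, whereas you sample only from $\partial^*X:=\{x\in X:\,N_xX\neq\emptyset\}$ and compensate with the identity $\mathrm{conv}(\partial^*X)=\mathrm{conv}(X)$ (extreme points of $\mathrm{conv}(X)$ lie in $X$ and admit support hyperplanes, then Minkowski/Krein--Milman). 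This is a real refinement, not a cosmetic one: for an arbitrary discrete $X_i\subset X$ there may be exposed points $x$ of $X_i$ with $N_xX=\emptyset$, at which the hypothesis $\sigma(x)\in N_xX_i$ required by Lemma~\ref{lm:wid} is not guaranteed by the assumptions of Proposition~\ref{prop:53} (e.g.\ interior sample points of a solid body that become vertices of $\mathrm{conv}(X_i)$), so the paper's application of the lemma needs exactly the kind of justification you supply. The price you pay is the extra convexity observation and the slightly more careful limit step (linearity of $y\mapsto(y-\bar x)\cdot\nu$ extends the support inequality from $\partial^*X$ to $\mathrm{conv}(\partial^*X)\supset X$), which you handle correctly; the only points stated loosely are harmless normalizations (translating the sandwiching slab so it stays bounded before passing to the limit, and re-extracting a convergent sequence of base points as $\e\to0$ in the equality case), both of which are also implicit in the paper.
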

\begin{proof} 
Let $\{X_i\}_{i\in \N}$ be an increasing sequence of discrete subsets of $X$ such that $X_i\to X$ in the Hausdorff sense. We claim that 
\beq\label{532}
\Chi{N^{\sigma, \theta}X}\geq\limsup_{i} \Chi{N^{\sigma, \theta}X_i} \quad\text{pointwise in }\S^{N-1}\,.
\eeq  	
To this aim let $\nu\not\in N^{\sigma, \theta}X $ and assume by contradiction that \eqref{532} does not hold at $\nu$ and thus that there exist a subsequence $\{i_n\}$  and points $x_n\in X_{i_n}$ such that $\nu\in N^{\sigma, \theta}_{x_n}X_{i_n}$. Passing to a further (not relabelled) subsequence if needed, we may assume that $x_n\to \bar x\in X$. Observe that  by the continuity of $\sigma(\cdot)$, $\nu\in S_{\theta, \sigma(\bar x)}$. Fix now any  $x\in X$ and due to the Hausdorff convergence find   $y_n\in X_{i_n}$ such that $y_n\to x$. Since for every $n$, $(y_n-x_n)\cdot \nu\leq 0$ passing to the limit we get $(x-\bar x)\cdot \nu\leq 0$. Due to the arbitrariness of $x$, we have shown that $\nu\in N_{\bar x} X$ and thus $\nu\in N^{\sigma, \theta}_{\bar x}X$, a contradiction. 

Using the first part of  Lemma~\ref{lm:wid} (with $X$ replaced by $X_i$), \eqref{532} and Fatou's Lemma we get
$$
\H^{N-1} (N^{\sigma, \theta}X)\geq\limsup_{i} \H^{N-1}({N^{\sigma, \theta}X_i})\geq \liminf_i \H^{N-1}({N^{\sigma, \theta}X_i})\geq \H^{N-1}(S_{\theta})\,.
$$
Assume now that the first inequality \eqref{540} holds for some $\theta\in[\theta_0/2,\theta_0]$, with $\delta_0=\frac\de2$, where $\de$ is the constant provided by Lemma~\ref{lm:wid}. 
Then the previous inequality yields  for $i$ sufficiently large, depending on $\theta$,
$$
\H^{N-1}({N^{\sigma, \theta}X_i}) \leq \H^{N-1}(S_{\theta})+\de
$$
 and thus, thanks to second part of Lemma~\ref{lm:wid} we infer that  there exists  $x_i\in X_i$ and  two parallel hyperplanes orthogonal to $\sigma(x_i)$ with mutual distance equal to $\e$  such that $X_i$ lies between them. By a compactness argument and the continuity of $\sigma$, letting $i\to\infty$ we get that  
 there exist  $x\in X$ and  two parallel hyperplanes orthogonal to $\sigma(x)$ with mutual distance equal to $\e$  such that $X$ lies between them. Thus, in particular $\wid(X)\leq \e$. This establishes \eqref{540}, which in turn, again by a compactness argument and the continuity of $\sigma$, yields the conclusion in the equality case.
\end{proof}

Next we prove a result in the spirit of \cite[Theorem~1.1]{cgr0}. 
In the following  $\cc$, $\Omega$ and $\Sigma$ will be as in Definition~\ref{kappapiunuovo}. Moreover if $x\in\Sigma$ is a point where the tangent hyperplane to $\Sigma$ exists we denote by $\nu_\Sigma(x)$ the normal to this hyperplane pointing outward with respect to $\Omega$. We give the following definition.

\begin{definition}\label{kappapiu}
We  denote by $\Sigma^+$ the set of points in $\Sigma\setminus\cc$ such that there exists a support hyperplane $\Pi_x$ with the property that $\Pi_x\cap \Sigma=\{x\}$.
\end{definition}

We recall the following result, see \cite[Theorem~2.2.9]{S}:
\begin{theorem}\label{th:S}
Let $K\subset\R^N$ be a compact convex set. Then for $\H^{N-1}$-almost every $\nu\in \S^{N-1}$ the support hyperplane for $K$ orthogonal to $\nu$ intersects $K$ at a single point.  
\end{theorem}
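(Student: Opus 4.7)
The plan is to deduce the claim from the differentiability properties of the support function. Define $h_K\colon \R^N\to\R$ by $h_K(\nu):=\sup_{x\in K}x\cdot\nu$; since $K$ is compact and convex, $h_K$ is finite, convex, positively $1$-homogeneous, and globally Lipschitz on $\R^N$. For every $\nu\neq 0$, the support hyperplane of $K$ orthogonal to $\nu/|\nu|$ is $\{x\cdot\nu=h_K(\nu)\}$, and it meets $K$ in the face $F(K,\nu):=\{x\in K:x\cdot\nu=h_K(\nu)\}$.

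The key identification is $F(K,\nu)=\pa h_K(\nu)$, the subdifferential of $h_K$ at $\nu$. The inclusion $F(K,\nu)\subset\pa h_K(\nu)$ is immediate: for any $x_0\in F(K,\nu)$ and any $\mu\in\R^N$ one has
$$
h_K(\nu+\mu)\geq (\nu+\mu)\cdot x_0=h_K(\nu)+\mu\cdot x_0.
$$
Conversely, if $y\in\pa h_K(\nu)$, then testing the subgradient inequality with $\mu=\pm\nu$ and using $1$-homogeneity forces $y\cdot\nu=h_K(\nu)$, while testing with arbitrary $\mu$ and combining with subadditivity $h_K(\nu+\mu)\leq h_K(\nu)+h_K(\mu)$ gives $y\cdot\mu\leq h_K(\mu)$ for every $\mu\in\R^N$, whence $y\in K$ by the standard support characterization of compact convex sets. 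In particular, $F(K,\nu)$ is a single point exactly when $h_K$ is differentiable at $\nu$.

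Next I would invoke the basic fact from convex analysis that any finite convex function on $\R^N$ is Fr\'echet differentiable at Lebesgue-a.e.\ point, which here follows from Rademacher's theorem applied to the globally Lipschitz function $h_K$. Hence the set
$$
N_K:=\{\nu\in\R^N\setminus\{0\}:h_K\text{ is not differentiable at }\nu\}
$$
has Lebesgue measure zero in $\R^N$. By $1$-homogeneity, $N_K$ is a cone, so writing the Lebesgue measure in polar coordinates yields
$$
0=|N_K|=\int_0^{+\infty}r^{N-1}\H^{N-1}(N_K\cap\S^{N-1})\,dr,
$$
which forces $\H^{N-1}(N_K\cap\S^{N-1})=0$. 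Combined with the characterization in the preceding paragraph, this shows that for $\H^{N-1}$-a.e.\ $\nu\in\S^{N-1}$ the support hyperplane orthogonal to $\nu$ touches $K$ at a single point, as required.

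The argument is essentially routine; the only mild subtlety is the subdifferential/face identification $F(K,\nu)=\pa h_K(\nu)$, but both inclusions drop out immediately from the definitions and the positive $1$-homogeneity of $h_K$, so no serious obstacle is expected.
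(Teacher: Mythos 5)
Your argument is correct and complete: the identification $F(K,\nu)=\pa h_K(\nu)$ is proved properly in both directions, differentiability of the support function at $\nu$ is equivalent to the face being a singleton, Rademacher gives Lebesgue-a.e.\ differentiability, and the cone structure of the nondifferentiability set together with polar coordinates transfers the null-set statement to $\S^{N-1}$. Note that the paper does not prove this statement at all — it simply cites Theorem~2.2.9 of Schneider's book — and your proof is precisely the standard argument behind that reference, so nothing further is needed beyond the routine facts you invoke (uniqueness of the subgradient at points of differentiability and Borel measurability of the nondifferentiability set, both classical).
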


\begin{corollary}\label{cor:S}
Let $\cc$ and $\Sigma\subset\R^N$ be as in Definition~\ref{kappapiunuovo}. With the notation above, we have that 
$$
\mathcal K^+(\Sigma)=\H^{N-1}\Big(\bigcup_{x\in\Sigma^+}N_x\Sigma\Big)\,,
$$
where $\mathcal K^+(\Sigma)$ is the total positive curvature defined in Definition~\ref{kappapiunuovo}.
\end{corollary}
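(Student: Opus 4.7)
The inclusion $\Sigma^+\subseteq\Sigma\setminus\cc$ yields at once the trivial inequality $\H^{N-1}\bigl(\bigcup_{x\in\Sigma^+}N_x\Sigma\bigr)\le\mathcal K^+(\Sigma)$; the plan is to obtain the reverse by showing that, modulo an $\H^{N-1}$-null subset of $\S^{N-1}$, every direction belonging to $\bigcup_{x\in\Sigma\setminus\cc}N_x\Sigma$ is already realized as the outer normal at an exposed point of $\Sigma$ lying outside $\cc$. The only tool I would use beyond elementary convex geometry is Theorem~\ref{th:S}, applied to the compact convex set $K:=\mathrm{conv}(\Sigma)$, which is compact because $\Omega$ is bounded and hence $\Sigma$ is itself compact.

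For every $\nu\in\S^{N-1}$ I set $h(\nu):=\max_{y\in\Sigma}y\cdot\nu$ and let $\Pi_\nu:=\{z\in\R^N:z\cdot\nu=h(\nu)\}$ be the associated support hyperplane. From the very definition of the normal cone one has the elementary equivalence
$$
\nu\in N_x\Sigma\ \Longleftrightarrow\ x\in\Sigma\cap\Pi_\nu,
$$
valid for every $x\in\Sigma$. Moreover every extreme point of $K=\mathrm{conv}(\Sigma)$ lies in $\Sigma$, so the face of $K$ cut out by $\Pi_\nu$ satisfies $K\cap\Pi_\nu=\mathrm{conv}(\Sigma\cap\Pi_\nu)$; in particular, whenever $K\cap\Pi_\nu$ collapses to a single point $y(\nu)$, that point belongs to $\Sigma$ and $\Sigma\cap\Pi_\nu=\{y(\nu)\}$. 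Theorem~\ref{th:S} applied to $K$ tells me that the set
$$
Z:=\{\nu\in\S^{N-1}:\ K\cap\Pi_\nu\text{ is not a single point}\}
$$
is $\H^{N-1}$-negligible.

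Now, for any $\nu\in\bigl(\bigcup_{x\in\Sigma\setminus\cc}N_x\Sigma\bigr)\setminus Z$, the equivalence above yields some $x\in\Sigma\setminus\cc$ with $x\in\Sigma\cap\Pi_\nu$, and since $\nu\notin Z$ this forces $\Sigma\cap\Pi_\nu=\{x\}$. Thus $\Pi_\nu$ is a support hyperplane to $\Sigma$ meeting $\Sigma$ only at the point $x\in\Sigma\setminus\cc$, so $x\in\Sigma^+$ and $\nu\in\bigcup_{y\in\Sigma^+}N_y\Sigma$. This gives the set-theoretic inclusion
$$
\bigcup_{x\in\Sigma\setminus\cc}N_x\Sigma\ \subseteq\ \bigcup_{x\in\Sigma^+}N_x\Sigma\,\cup\,Z,
$$
and taking $\H^{N-1}$ on both sides completes the proof.

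The only mildly delicate point I anticipate is the $\H^{N-1}$-measurability of the two unions. This is a soft issue: both sets are analytic subsets of $\S^{N-1}$, being continuous projections of Borel subsets of $\Sigma\times\S^{N-1}$ carved out by the (closed) conditions $(y-x)\cdot\nu\le 0$ and (for $\Sigma^+$) the existence of a separating hyperplane; analytic sets are universally $\H^{N-1}$-measurable. Alternatively one may read the statement in terms of outer measure, in which case the inclusion displayed above is already enough. Apart from this measurability bookkeeping, the argument is purely a generic uniqueness-of-contact-point statement for supporting hyperplanes and uses nothing beyond Theorem~\ref{th:S}.
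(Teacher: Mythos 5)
Your argument is correct and is essentially the paper's own proof: both apply Theorem~\ref{th:S} to the convex hull $K$ of the compact set $\Sigma$ and note that for $\H^{N-1}$-a.e.\ direction in $\bigcup_{x\in\Sigma\setminus\cc}N_x\Sigma$ the support hyperplane meets $K$, hence $\Sigma$, at a single point, which must be the point of $\Sigma\setminus\cc$ realizing that normal and is therefore exposed. The extra remarks on extreme points and measurability are harmless bookkeeping that the paper leaves implicit.
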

\begin{proof}
Let $K$ denote the convex hull of $\Sigma$. By Theorem~\ref{th:S} we have that for $\H^{N-1}$-a.e. direction $\nu\in \bigcup_{x\in\Sigma\setminus\cc} N_x\Sigma$ the corresponding support plane for $K$ intersects $K$ at  a single point that necessarily belongs to $\Sigma\setminus\cc$ and thus to $\Sigma^+$.  
\end{proof}

\begin{proof}[Proof of Theorem~\ref{th:cgr-main1}]
Observe that if  $\Sigma\cap\cc=\emptyset$ then 
$$
\bigcup_{x\in\Sigma\setminus\cc}N_x\Sigma=\S^{N-1}\,,
$$
hence \eqref{cgr-main11} trivially holds.

Hence in the following we may assume that $\Sigma\cap\cc\not=\emptyset$. 
\par\noindent
 We denote by   $\nu_\cc$ the outward normal to $\cc$. 
 We start by proving \eqref{cgr-main11}. Let us define $\sigma: \Sigma\cap\cc\to \S^{N-1}$ as $\sigma(x):=\nu_\cc(x)$. Note that since $\cc$  is convex the direction $\sigma(x)$ belongs to $N_x\cc$ and thus  to $N_x(\Sigma\cap\cc)$ for every $x\in\Sigma\cap\cc$.

Given $\nu\in\S^{N-1}$, we denote by $\nu^\perp$ the hyperplane orthogonal to $\nu$ and passing through the origin and we set
$$
\bar t:=\max\{t\in \R:\, (t\nu+\nu^\perp)\cap \Sigma\neq\emptyset\}\,.
$$
Clearly, by definition  for every $\nu\in \S^{N-1}$ the hyperplane $\bar t\nu+\nu^\perp $ is a support hyperplane for $\Sigma$. Fix $\theta\in(0,\theta_0)$.
We claim that for every $x	\in\Sigma\cap\cc$ 
\beq\label{cgr-main12}
\nu\in N_x(\Sigma\cap\cc)\cap S_{\theta, \sigma(x)}\quad\text{implies}\quad \bar t\nu+\nu^\perp\cap \Sigma\subset \Sigma\setminus\cc\,.
\eeq	
Let $t_0\in \R$ be such that $x+\nu^\perp= t_0\nu+\nu^\perp$ and  observe that since $\nu\cdot \sigma(x)\geq\cos\theta>\cos\theta_0$ then by assumption \eqref{cgr-main10} $\nu\not\in N_x\Sigma$, hence the hyperplane $t_0\nu+\nu^\perp$ enters $\Omega$. Thus it easily follows that $\bar t>t_0$. Let $y\in\bar t\nu+\nu^\perp\cap \Sigma $. Then $y\not\in\Sigma\cap\cc$, since otherwise  this would contradict the fact that $t_0\nu+\nu^\perp$ is a support hyperplane 
for $\Sigma\cap\cc$. This establishes \eqref{cgr-main12}. From \eqref{cgr-main12} it follows that 
\beq\label{cgr-main13}
N^{\sigma, \theta}(\Sigma\cap\cc)\subset\bigcup_{x\in\Sigma\setminus\cc}N_x\Sigma\,.
\eeq
Recall that by  Definition~\ref{kappapiunuovo}
$$
\H^{N-1}\Big(\bigcup_{x\in\Sigma\setminus\cc}N_x\Sigma\Big)=\mathcal K^+(\Sigma)\,.
$$
 Combining the equality above with \eqref{cgr-main13}, the inequality \eqref{cgr-main11} follows from \eqref{531} with $X=\Sigma\cap\cc$, letting $\theta\to\theta_0^-$.  
 
 {Given $\e>0$, let $\delta_0$ be the constant provided by Proposition~\ref{prop:53} and let $\theta\in[\theta_0/2,\theta_0)$ such that
 \beq\label{cgr-main13.1}
 \H^{N-1}(S_{\theta_0})\leq  \H^{N-1}(S_{\theta})+\frac{\delta_0}{2}\,.
 \eeq
Assume that \eqref{cgr-main11.1}  and \eqref{cgr-main11.5} for some   $\delta\in(0,\delta_0/2)$ such that $\cos\theta_0+\delta<\cos\theta$. Then, using the assumption  \eqref{cgr-main11.1}, the same argument as before yields \eqref{cgr-main12}, hence \eqref{cgr-main13}. Thus, from \eqref{cgr-main11.5} and \eqref{cgr-main13.1}} we have in particular
  $$
 \H^{N-1}(N^{\sigma, \theta}(\Sigma\cap\cc))\leq \mathcal K^+(\Sigma)\leq\H^{N-1}(S_{\theta_0})+\delta\leq \H^{N-1}(S_{\theta})+\delta_0\,.
 $$
 The conclusion  follows from Proposition~\ref{prop:53}. \end{proof}
 
 \begin{remark}
 Observe that the equality case in \eqref{cgr-main11} does not imply  $\pa\Om\cap\cc$ lies on a facet of $\cc$. In fact it may happen that $\pa\Om\cap\cc$ is contained in a convex set of Hausdorff dimension strictly less than $N-1$, see Figure~\ref{fig1}.
 \end{remark}
 
 \begin{figure}\label{fig1}
\includegraphics[scale=0.5]{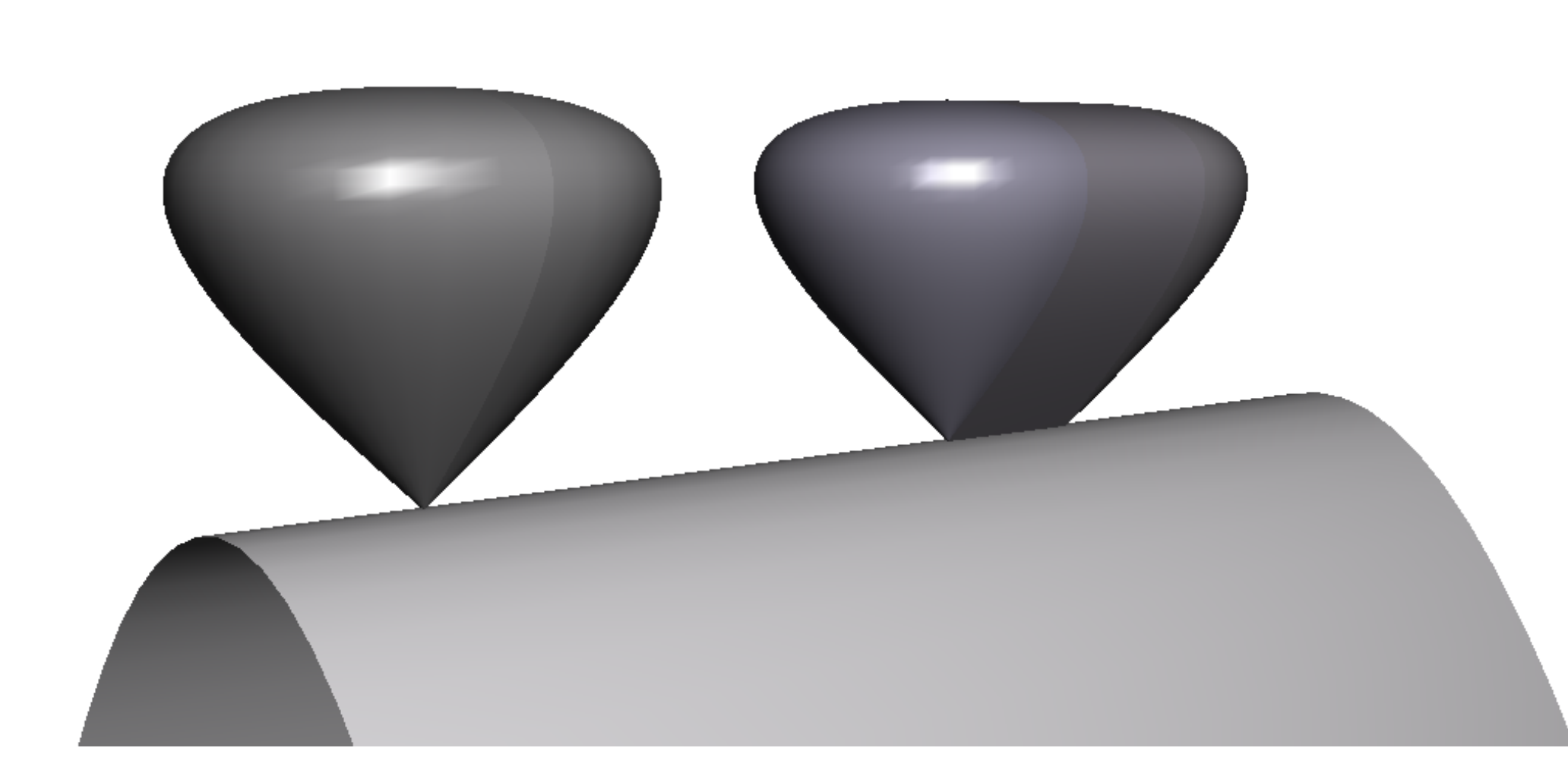} 
 \begin{picture}
 (0,0)\put(-270,80){$\Om_1$}\put(-65,90){$\Om_2$}\put(-150,30){$\cc$}
 \end{picture}
\vskip -.2cm
 \caption{\small Both $\Sigma_1=\overline{\pa\Om_1\setminus\cc}$ and $\Sigma_2=\overline{\pa\Om_2\setminus\cc}$  meet $\cc$ with  contact angle  $\pi/4$ and satisfy the equality in \eqref{cgr-main11} with $\theta_0=\pi/4$. Note that $\pa\Om_1\cap\cc$ is a point and $\pa\Om_2\cap\cc$ is a segment.}
 
\end{figure}

\begin{remark}\label{rm:fessa}
Note that if $x\in\Sigma\cap\cc$ is a point where  $\nu_\Sigma(x)$ exists and belongs to $N_x\Sigma$, then $\nu\cdot\nu_\cc(x)\leq\nu_\Sigma(x)\cdot\nu_\cc(x)$ for every $\nu\in N_x\Sigma$. Therefore in this case it suffices to check \eqref{cgr-main10} for $\nu=\nu_\Sigma(x)$.
\end{remark}

It is well known that for surfaces $\Sigma\subset\R^3$ without boundary the following inequality holds
$$
\int_{\Sigma}|H_\Sigma|^{2}\,d\H^{2}\geq 16\pi\,.
$$
with equality achieved if and only if $\Sigma$ is a sphere. 
We now apply Theorem~\ref{th:cgr-main1} to extend this inequality to the  following  extension of the Willmore energy in $N$-dimensions
$$
\int_{\Sigma\setminus\cc}|H_\Sigma|^{N-1}\,d\H^{N-1}\,,
$$
for $C^{1,1}$ hypersurfaces with boundary supported on convex sets and with contact angle larger than a given $\theta_0\in(0,\pi)$. 
Note  that in the next theorem we do not assume any regularity on the convex set $\cc$.
\begin{theorem}[A  Willmore type inequality.]\label{41}
Let $\cc$, $\Omega$ and $\Sigma$ be as in Definition~\ref{kappapiunuovo} and let $\theta_0\in (0, \pi)$.  Assume  that  $\Sigma\setminus\cc$  is of class $C^{1,1}$. Set $H_\Sigma:=\mathrm{div}_\Sigma{\nu_\Sigma}$ (where $\nu_\Sigma$ is the  unit normal to $\Sigma$ pointing outward with respect to $\Omega$). Assume also 
\beq\label{410}
\nu\cdot\nu'\leq\cos\theta_0 \quad \text{whenever $x\in\Sigma\cap\cc,\,\,\nu\in N_x\Sigma\,\,$ and $\,\,\nu'\in N_x\cc\,.$}
\eeq 
Then,
\beq\label{411}
\int_{\Sigma\setminus\cc}|H_\Sigma|^{N-1}\,d\H^{N-1}\geq (N-1)^{N-1}\H^{N-1}(S_{\theta_0})\,. 
\eeq
Moreover, if equality holds in \eqref{411} and $H_\Sigma\not=0$ a.e., then $\Sigma\setminus\cc$ coincides, up to a rigid motion, with an omothetic of~$S_{\theta_0}$ sitting on a facet of $\cc$.
 \end{theorem}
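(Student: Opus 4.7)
The plan is to sandwich the total positive curvature $\mathcal K^+(\Sigma)$ between the sharp lower bound $\H^{N-1}(S_{\theta_0})$ coming from Theorem~\ref{th:cgr-main1} and an upper bound of the form $\frac{1}{(N-1)^{N-1}}\int_{\Sigma\setminus\cc}|H_\Sigma|^{N-1}\,d\H^{N-1}$ obtained via the Gauss map, exactly following the heuristic chain \eqref{intro1}.

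For the upper bound I would start from Corollary~\ref{cor:S}, which identifies $\mathcal K^+(\Sigma)$ with $\H^{N-1}(\bigcup_{x\in\Sigma^+}N_x\Sigma)$. At each exposed point $x\in\Sigma^+\subset\Sigma\setminus\cc$ the $C^{1,1}$ regularity forces $N_x\Sigma$ to reduce to the singleton $\{\nu_\Sigma(x)\}$, and the existence of a one-sided support hyperplane forces the second fundamental form of $\Sigma$ to be semidefinite; orienting so that all principal curvatures $\kappa_i(x)$ are nonnegative, the area formula applied to the Lipschitz Gauss map $\nu_\Sigma:\Sigma\setminus\cc\to\S^{N-1}$ yields
$$\mathcal K^+(\Sigma)=\H^{N-1}(\nu_\Sigma(\Sigma^+))\leq\int_{\Sigma^+}\kappa_1\cdots\kappa_{N-1}\,d\H^{N-1},$$
and the arithmetic--geometric mean inequality, together with $H_\Sigma^{N-1}\leq|H_\Sigma|^{N-1}$, then gives
$$(N-1)^{N-1}\mathcal K^+(\Sigma)\leq\int_{\Sigma\setminus\cc}|H_\Sigma|^{N-1}\,d\H^{N-1}.$$

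For the lower bound I would invoke Theorem~\ref{th:cgr-main1}. Since that theorem assumes $\cc$ is $C^1$ while here $\cc$ may be arbitrary, I would reprove the lower bound by running the argument of Theorem~\ref{th:cgr-main1} with \eqref{410} in place of \eqref{cgr-main10}: apply Proposition~\ref{prop:53} to $X=\Sigma\cap\cc$ with any continuous selection $\sigma:\Sigma\cap\cc\to\S^{N-1}$ satisfying $\sigma(x)\in N_x\cc\subset N_x(\Sigma\cap\cc)$; such a $\sigma$ can be built from the regularization $\cc_\eta:=\cc+\overline{B_\eta(0)}$ by setting $\sigma_\eta(x)=\nu_{\cc_\eta}(\pi_{\cc_\eta}(x))$ (continuous in $x$) and extracting a uniform subsequential limit as $\eta\to 0$. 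The strengthened assumption \eqref{410}, which tests against every $\nu'\in N_x\cc$, is precisely what is needed to reproduce the inclusion \eqref{cgr-main13} for our $\sigma$, and Proposition~\ref{prop:53} then delivers $\mathcal K^+(\Sigma)\geq\H^{N-1}(S_{\theta_0})$.

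Combining the two bounds gives \eqref{411}. For the rigidity, assume $H_\Sigma\neq 0$ a.e.\ and that equality holds in \eqref{411}; then each of the three intermediate inequalities must be an equality. Equality in AM--GM yields $\kappa_1=\cdots=\kappa_{N-1}=H_\Sigma/(N-1)$ $\H^{N-1}$-a.e.\ on $\Sigma\setminus\cc$, so $\Sigma\setminus\cc$ is umbilical and therefore lies on a single round sphere of radius $r_0=(N-1)/|H_\Sigma|$; moreover $H_\Sigma\ge 0$ a.e.\ and $\Sigma^+$ exhausts $\Sigma\setminus\cc$ up to null sets. Finally, equality $\mathcal K^+(\Sigma)=\H^{N-1}(S_{\theta_0})$, together with the rigidity part of Proposition~\ref{prop:53} (the $\wid=0$ conclusion), forces $\Sigma\cap\cc$ to lie on a single support hyperplane $\Pi$ of $\cc$, and the contact angle along $\Sigma\cap\Pi$ must then equal $\theta_0$ exactly. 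Since a round cap meeting its base hyperplane at a prescribed angle is unique up to translation and homothety, this identifies $\Sigma\setminus\cc$ with a scaled copy of $S_{\theta_0}$ resting on $\Pi\cap\cc$, which is necessarily a facet of $\cc$. The hard part will be the lower-bound step, i.e.\ adapting Theorem~\ref{th:cgr-main1} (and its rigidity counterpart) to an arbitrary convex $\cc$ through a continuous selection from the set-valued map $x\mapsto N_x\cc$; once this is in place, the remaining ingredients (area formula, AM--GM, umbilical rigidity) form a classical toolkit.
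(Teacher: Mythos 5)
Your upper bound (Corollary~\ref{cor:S}, area formula for the Gauss map, AM--GM) is the same as the paper's, but the lower-bound step contains a genuine gap. You propose to apply Proposition~\ref{prop:53} to $X=\Sigma\cap\cc$ with a \emph{continuous} selection $\sigma(x)\in N_x\cc$; such a selection does not exist for a general convex body. If $\cc$ has an edge or corner and $\Sigma\cap\cc$ crosses it (e.g.\ $\cc$ a cube and $\pa\Om\cap\cc$ a curve meeting two adjacent faces), then on each open face the normal cone $N_x\cc$ is a singleton equal to that face's normal, so any selection is forced to take two different limits at the edge point and cannot be continuous there; note that \eqref{410} does not exclude this configuration, since at most points of $\Sigma\cap\cc$ the condition is vacuous ($N_x\Sigma=\emptyset$). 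Your concrete construction is also ill-posed: for $x\in\Sigma\cap\cc\subset\cc$ one has $\pi_{\cc_\eta}(x)=x$, which is an \emph{interior} point of $\cc_\eta=\cc+\overline{B_\eta(0)}$, so $\nu_{\cc_\eta}(\pi_{\cc_\eta}(x))$ is undefined, and the natural substitutes (points of $\pa\cc_\eta$ projecting onto $x$) form the whole set $x+\eta N_x\cc$, so no canonical continuous $\sigma_\eta$ exists and no uniform limit can be extracted in general. This is exactly the obstruction the paper circumvents differently: it applies Theorem~\ref{th:cgr-main1} not to $\cc$ but to the $C^{1,1}$ sets $\cc_\eta$ (where $\nu_{\cc_\eta}$ is continuous on $\pa\cc_\eta\supset\Sigma_\eta\cap\cc_\eta$), shows via a compactness argument that \eqref{410} yields the angle condition for $\cc_\eta$ with some $\theta_\eta\to\theta_0$, obtains the chain \eqref{412} for each $\eta$, and then passes to the limit in the \emph{integral} inequality using the pointwise convergence $\Chi{\widetilde\Sigma_\eta}\to\Chi{\widetilde\Sigma}$, never needing a selection on $\Sigma\cap\cc$ itself.

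The same gap propagates into your rigidity argument, since the flatness of $\Sigma\cap\cc$ is extracted there from the equality/stability case of Proposition~\ref{prop:53} applied with the nonexistent $\sigma$; the paper instead deduces $\mathcal K^+(\Sigma_\eta)-\H^{N-1}(S_{\theta_\eta})\to0$ from \eqref{412} and invokes the width estimate of Theorem~\ref{th:cgr-main1}, which is independent of the shape of $\cc_\eta$, to conclude in the limit that $\Sigma\cap\cc$ lies on a support hyperplane. Two further points are glossed over in your rigidity step: umbilicity a.e.\ only puts each \emph{connected component} of $\Sigma\setminus\cc$ on a sphere (possibly of different radii), and you do not rule out several components; the paper does so by first using $H_\Sigma\neq0$ a.e.\ to get $\Sigma\setminus\cc=\widetilde\Sigma$, so that \eqref{410} can be tested on each component and \eqref{411} applied componentwise, forcing a single component. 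Likewise, the claim that the contact angle must equal $\theta_0$ needs the observation that a cap with larger angle has strictly larger Willmore energy, which is how the homothety with $S_{\theta_0}$ is actually pinned down.
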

\begin{proof}
Without loss of generality we may assume that 
$$
\int_{\Sigma\setminus\cc}|H_\Sigma|^{N-1}\,d\H^{N-1}<\infty\,.
$$
Set for any $\eta>0$ sufficiently small $\cc_\eta:=\cc+\overline{B_\eta(0)}$ and  $\Sigma_\eta:=\overline{\pa\Om\setminus \cc_\eta}$. Observe that $\cc_\eta$ satisfies both a outer and inner uniform ball condition and thus is of class $C^{1,1}$, see \cite{MM, D}. Note also that there exists $\theta_\eta \in (0, \theta_0)$ such that 
\beq\label{cgr-main14-}
\nu\cdot\nu_{\cc_\eta}(x)\leq\cos\theta_\eta \quad\text{whenever $x\in\Sigma_\eta\cap\cc_\eta,\,\,\nu\in N_x\Sigma_\eta\,,$}
\eeq
with $\theta_\eta\to \theta_0$ as $\eta\to 0^+$. Indeed, if not, there would exist a sequence $\eta_h\to0$,  a sequence of points $x_h\in\Sigma_{\eta_h}\cap\cc_{\eta_h}$ and a sequence $ \nu_h\in N_{x_h}\Sigma_{\eta_h}$, such that $\nu_h\cdot\nu_{\cc_{\eta_h}}(x_h)\geq\cos\theta'$ for some $\theta'\in(0,\theta_0)$. We may assume that $x_h\to x\in\Sigma\cap\cc$, $\nu_h\to\nu$ and $\nu_{\cc_{\eta_h}}(x_h)\to\nu'$. Clearly $\nu\in N_x\Sigma$, $\nu'\in N_x\cc$ and $\nu\cdot\nu_\cc(x)\geq\cos\theta'$, a contradiction to \eqref{cgr-main10}.

We set 
$$
\widetilde \Sigma=\{x\in\Sigma\setminus\cc:\, N_x\Sigma\not=\emptyset\},\qquad \widetilde \Sigma_\eta=\{x\in\Sigma\setminus\cc_\eta:\, N_x\Sigma_\eta\not=\emptyset\}\,.
$$
We claim that 
\beq\label{cgr-main14}
\Chi{\widetilde\Sigma_\eta}\to \Chi{\widetilde\Sigma} \quad \text{pointwise in $\Sigma\setminus\cc$.} 
\eeq
First of all note that $\widetilde\Sigma\setminus\cc_\eta\subset\widetilde\Sigma_\eta$ for all $\eta$, whence 
$$\Chi{\widetilde\Sigma}=\lim_{\eta\to 0+}\Chi{\widetilde\Sigma\setminus\cc_\eta} \leq\liminf_{\eta\to 0^+} \Chi{\widetilde\Sigma_\eta}\quad\text{pointwise in $\Sigma\setminus\cc$}
$$ 
If otherwise $x\not\in\widetilde\Sigma$, we show that $x\not\in\widetilde\Sigma_\eta$ for $\eta$ small. Indeed, assume by contradiction that   there exist $\nu_h\in N_x(\Sigma_{\eta_h})$, for a sequence $\eta_h\to0$. Then, passing to a subsequence, if needed, $\nu_h\to\nu\in N_x\Sigma$, a contradiction. This proves that
$$
\Chi{\widetilde\Sigma}\geq\limsup_{\eta\to 0^+} \Chi{\widetilde\Sigma_\eta}\quad\text{pointwise in $\Sigma\setminus\cc$}
$$ 
and thus \eqref{cgr-main14} holds.
 
Let $(\Sigma_\eta)^+$ the subset of $\widetilde\Sigma_\eta$ defined as in Definition~\ref{kappapiu} with $\Sigma$ replaced by $\Sigma_\eta$.  
Denote by $K_\Sigma$ the Gaussian curvature of $\Sigma\setminus\cc$ and observe that on $\widetilde\Sigma_\eta$ we have $H_\Sigma\geq0$.
By the arithmetic-geometric mean inequality $(N-1)^{N-1}K_\Sigma\leq H_\Sigma^{N-1}$ on $\widetilde\Sigma_\eta$. Then  by Theorem~\ref{th:cgr-main1}  we  get
\begin{align}
\int_{\Sigma\setminus\cc_\eta}|H_\Sigma|^{N-1}&\,d\H^{N-1}\geq
\int_{\widetilde\Sigma_\eta}H_\Sigma^{N-1}\,d\H^{N-1}\geq(N-1)^{N-1}\int_{\widetilde\Sigma_\eta}K_\Sigma\,d\H^{N-1}\nonumber\\
&\geq(N-1)^{N-1}\int_{(\Sigma_\eta)^+}K_\Sigma\,d\H^{N-1}
=(N-1)^{N-1}\int_{(\Sigma_\eta)^+}\mathrm{det}(D\nu_\Sigma)\,d\H^{N-1}\label{412}\\
&=(N-1)^{N-1}{\mathcal K}^+(\Sigma_\eta)\geq(N-1)^{N-1}\H^{N-1}(S_{\theta_\eta})\,, \nonumber
\end{align}
where in the second equality we have used Corollary~\ref{cor:S} and the area formula, since $\nu_{\Sigma}$ is a  Lipschitz map in a neighborhood of $\Sigma^+$. Then,  letting $\eta\to0$ and recalling \eqref{cgr-main14} and the fact that $\theta_\eta\to\theta_0$, we get
\beq
\begin{split}\label{413}
\int_{\Sigma\setminus\cc}|H_\Sigma|^{N-1}\,d\H^{N-1}&\geq \int_{\widetilde\Sigma}H_\Sigma^{N-1}\,d\H^{N-1}\\
&\geq(N-1)^{N-1}\int_{\widetilde\Sigma}K_\Sigma\,d\H^{N-1}\geq(N-1)^{N-1}\H^{N-1}(S_{\theta_0})\,.
\end{split}
\eeq
In particular \eqref{411} follows.

If equality holds in \eqref{411} holds, from \eqref{412} we have that $\mathcal K^+(\Sigma_\eta)-\H^{N-1}(S_{\theta_\eta})\to0$. In turn  from the second part of Theorem~\ref{th:cgr-main1} we get that $\wid(\Sigma_\eta\cap\cc_\eta)\to0$ and more precisely that  $\Sigma_\eta\cap\cc_\eta$ lies between two parallel  hyperplanes orthogonal to  $\nu_{\cc_\eta}(x_\eta)$  for some $x_\eta\in\Sigma_\eta\cap\cc_\eta$ with mutual distance going to zero. Passing to the limit by a simple compactness argument we infer that  $\Sigma\cap\cc$ lies on a support hyperplane to $\cc$.

Note also that in the equality case, if $H_\Sigma\not=0$ $\H^{N-1}$-a.e., then  \eqref{413} implies that  $\Sigma\setminus\cc=\widetilde\Sigma$. In turn this yields that every $x\in\Sigma\cap\cc$ has a support hyperplane to $\Sigma$.  Moreover,   \eqref{413} yields also that
$H_\Sigma^{N-1}=(N-1)^{N-1}K_\Sigma$.  
 In turn this implies that $\Sigma$ is umbilical and thus, by a classical result, see for instance \cite[Prop.~4, Ch.~3]{dc}, each connected component $\Sigma_i$ of $\Sigma$ is contained in a sphere. Since $\Sigma\cap\cc$ is contained in a hyperplane $\Pi$ tangent to $\cc$,  each  $\Sigma_i$ is either a spherical cap supported on $\Pi$ and satisfying \eqref{410} with $\Sigma$ replaced by $\Sigma_i$, or a sphere not intersecting $\cc$. 
 
 In either case, since \eqref{410} is satisfied at every point in $\Sigma\cap\cc$ (recall that $\Sigma=\widetilde\Sigma$),
 we may apply \eqref{411} to infer that for every connected component $\Sigma_i$ we have
$$
\int_{\Sigma_i\setminus\cc}H_{\Sigma_i}^{N-1}\,d\H^{N-1}\geq (N-1)^{N-1}\H^{N-1}(S_{\theta_0})\,. 
$$
   In particular, since we are in the equality case, there must be only one connected component. Thus   $\Sigma$  is a spherical cap homothetic to $S_{\theta_0}$ up to a rigid motion. Finally $\Sigma\cap\cc$ by convexity must lie on a facet of $\cc$.~\end{proof}

 \section{The equality case in the relative isoperimetric inequality outside a convex set}\label{sc:4}
In this section we give the proof of Theorem~\ref{th:ugual}. 
Throughout this proof  we will denote by $\mathscr H$ the half space
\beq\label{semisp}
\mathscr{H}:=\{(x', x_N)\in \R^N:\, x_N>0\}\,.
\eeq
We will also need the following notions of $(\Lambda,r_0)$-minimizer and  restricted $(\Lambda,r_0)$-minimizer  for the relative perimeter, which extend the standard notion of perimeter $(\Lambda,r_0)$-minimizer recalled in Definition~\ref{def:lambdaminclassic}.

\begin{definition}\label{def:lambdamin}
Let $\cc\subset\R^N$ be a closed convex set with nonempty interior and let $\Lambda,r_0>0$. We say that a set of finite perimeter $E\subset\R^N\setminus\cc$ is a {\em $(\Lambda,r_0)$-minimizer of the relative perimeter}  $P(\cdot;\R^N\setminus\cc)$ if for any $F\subset\R^N\setminus\cc$ such that diam$(E\Delta F)\leq r_0$ we have
$$
P(E;\R^N\setminus\cc)\leq P(F;\R^N\setminus\cc)+\Lambda|E\Delta F|\,.
$$
Moreover, we say that $E\subset\R^N\setminus \cc$ is a {\em restricted  $(\Lambda,r_0)$-minimizer} if the above inequality holds for every set  $F\subset\R^N\setminus\cc$ such that diam$(E\Delta F)\leq r_0$  and $\pa^*F\cap\cc\subset\pa^*E\cap\cc$ up to a $\H^{N-1}$-negligible set.
\end{definition}

\begin{proof}[Proof of Theorem~\ref{th:ugual}]
Let $m_0>0$ be a given mass and let $\Om_0$ be a minimizer of the perimeter outside $\cc$ such that $|\Om_0|=m_0$ and
\beq\label{ugual0}
P(\Om_0;\R^N\setminus\cc)=N\Big(\frac{\omega_N}{2}\Big)^{\frac{1}{N}}m_0^{\frac{N-1}{N}}\,.
\eeq
Since $\Om_0$ solves the isoperimetric problem we have that $\Omega_0$ is a $(\Lambda_0,r_0)$-minimizer of the relative perimeter in $\R^N\setminus\cc$, see Definition~\ref{def:lambdamin}, 
for some $\Lambda_0, r_0>0$, depending on $\Om_0$, see for instance the argument of \cite[Example 21.3]{maggi-book}.\footnote{Note that in \cite[Example 21.3]{maggi-book} it is proved that a mass constrained minimizer $E$ of the relative perimeter in an open set $A$ is a perimeter $(\Lambda_0,r_0)$-minimizer  in $A$ according to Definition~\ref{def:lambdaminclassic}. However an inspection of the proof shows that $E$ is also a $(\Lambda_0,r_0)$-minimizer of the relative perimeter $P(\cdot,A)$ according to Definition~\ref{def:lambdamin}.}  In turn by Proposition~\ref{densityestimates} $\Om_0$ satisfies uniform volume density estimates and thus it easily follows that $\Om_0$ is bounded.

We fix a sufficiently large ball $B_R(0)$ containing $\overline{\Om_0}$. Note that by standard argument, see also the argument of Step 1 below, $\Om_0$ solves the following penalized minimum problem
$$
\min\{P(E;\R^N\setminus\cc)+\Lambda_0||E|-m|:\,\,E\subset B_R\setminus\cc\}\,,
$$
for a possibly larger $\Lambda_0$. In particular   we have
\beq\label{ugual0.1}
P(\Omega_0;\R^N\setminus\cc)\leq P(E;\R^N\setminus\cc)+\Lambda_0|\Om_0\Delta E|\qquad\text{for all $E\subset B_R\setminus\cc$.}
\eeq
Since in the remaining part of the proof we will always work inside $B_R$, up to replacing $\cc$ with $\cc\cap\overline{B_R}$, we may assume without loss of generality that $\cc$ is bounded.

Observe that by Theorem~\ref{rm:regola} we may assume that $\Om_0$ is an open set and that $\partial\Om_0\setminus\cc$ coincides with the reduced boundary $\partial^*\Om_0\setminus\cc$ up to an $\H^{N-1}$-negligible set.
Let us show that  $\Om_0$ is connected. Indeed, if otherwise $\Omega_0=\Omega_1\cup\Om_2$, with $\Om_1$ and $\Om_2$ open, $\Om_1$ a connected component of $\Om_0$ with $0<|\Om_1|<m_0$, we have by Theorem~\ref{th:isoperim}
\begin{align*}
P(\Om_0;\R^N\setminus\cc)&
=P(\Om_1;\R^N\setminus\cc)+P(\Om_2;\R^N\setminus\cc)\\
&\geq N\Big(\frac{\omega_N}{2}\Big)^{\frac{1}{N}}|\Om_1|^{\frac{N-1}{N}}+N\Big(\frac{\omega_N}{2}\Big)^{\frac{1}{N}}|\Om_2|^{\frac{N-1}{N}}>N\Big(\frac{\omega_N}{2}\Big)^{\frac{1}{N}}m_0^{\frac{N-1}{N}}\,,
\end{align*}
which is a contradiction to \eqref{ugual0}.

 For every $\eta\geq0$ we set $\cc_\eta=\cc+\overline{B_\eta(0)}$ and, for $\eta\in[0,\bar\eta]$  we set $m_\eta:=|\Om_0\setminus\cc_\eta|$, where $\bar\eta>0$ is such that  $|\Om_0\setminus\cc_{\bar\eta}|>0$. Correspondingly, we set for $m\in(0,m_\eta]$
\beq\label{ieta}
I_\eta(m)=\min\{P(E;\R^N\setminus\cc_\eta):\,\,E\subset \Om_0\setminus\cc_\eta,\,\,|E|=m\}
\eeq
and denote by $\Om_{\eta,m}$ any minimizer of the above problem. {Note that $\Om_{0,m_0}=\Om_0$. Observe also that
\beq\label{supsup}
\sup_{\eta\in[0,\bar\eta]}\sup_{m\in(0,m_\eta)}P(\Om_{\eta,m})<\infty\,.
\eeq
Indeed, given $\eta\in[0,\bar\eta]$ and $0<m\leq m_\eta$ there exists $\eta'\geq\eta$ such that $|\Om_0\setminus\cc_{\eta'}|=m$. Thus
\begin{align*}
P(\Om_{\eta,m})&\leq P(\Om_{\eta,m};\R^N\setminus\cc_\eta)+P(\cc_\eta;B_R)\leq P(\Om_0\setminus\cc_{\eta'})+P(\cc_\eta;B_R)\\
&\leq P(\Om_0;\R^N\setminus\cc)+2\sup_{s\geq0}P(\cc_s;B_R)\leq P(\Om_0;\R^N\setminus\cc)+2N\omega_NR^{N-1}\,.
\end{align*}
}

Let us fix  $m', m''\in(0,m_0)$, with $m'<m''$. 
We claim that there exists $\tilde\eta\in(0,\bar\eta]$ such that 
\beq\label{lunga}
\text{if $\eta\in[0,\tilde\eta]$ and $U$ is a connected component of $\Om_0\setminus\cc_\eta$, then $|U|\not\in[m',m'']$.}
\eeq
Note that this property implies in  particular  that
\beq\label{ugual1.05}
\pa\Om_{\eta,m}\cap(\Om_0\setminus\cc_\eta)\not=\emptyset\qquad\text{for all $\eta\in[0,\tilde\eta]$ and $m\in[m',m'']$.}
\eeq
 To prove \eqref{lunga} we fix $x_0\in\Om_0$ and for every $\eta$ we denote by $U_\eta$ the connected component of $\Omega_0\setminus\cc_\eta$ containing $x_0$. Note that $U_\eta$ increases as $\eta$ becomes smaller. Given any other point $x\in\Omega_0$ there exists a path connecting $x_0$ and $x$ contained in $\Omega_0$, thus $x\in U_\eta$ for $\eta$ small enough. Hence $|U_\eta|\to m_0$, and  the claim follows.

We split the remaining part of the proof in several steps. Some of the long technical claims contained in these steps will be proved in Appendix~B so as not to break the line of reasoning.

\par\noindent
{\bf Step 1.} ({\it Equivalence with a volume penalized problem}). 
Fix $0<m'<m''<m_0$ and let $0<\tilde\eta\leq\bar\eta$ be as in \eqref{lunga}. 
We claim that there exists $\Lambda'>0$  with the following property:  for every $\eta\in[0,\tilde\eta]$ and $m\in [m',m'']$ we have that $\Om_{\eta,m}$ is a minimizer of the following problem
\beq\label{ugual1}
\min\{P(E;\R^N\setminus\cc_\eta)+\Lambda'||E|-m|:\,\,E\subset \Om_0\setminus\cc_\eta\}\,.
\eeq
The proof of this claim will be given in the Appendix~B.

\par\noindent
{\bf Step 2.} ({\it $\Om_{\eta,m}$ is a restricted $\Lambda$-minimizer}). Fix $0<m'<m''<m_0$, let $0<\tilde\eta\leq\bar\eta$ be as in \eqref{lunga} and set $\Lambda=\max\{\Lambda',\Lambda_0\}$,   where $\Lambda'$ is as in Step 1. We claim that for every $\eta\in[0,\tilde\eta]$ and $m\in [m',m'']$, $\Om_{\eta,m}$ is a restricted $\Lambda$-minimizer
 under the constraint that $\partial^*E\cap\cc_\eta\subset\partial^*(\Omega_0\setminus\cc_\eta)\cap\cc_\eta$. 
 More precisely, for every set of finite perimeter $E\subset B_R(0)\setminus\cc_\eta$ such that $\partial^*E\cap\cc_\eta\subset\partial^*(\Omega_0\setminus\cc_\eta)\cap\cc_\eta$ up to a $\H^{N-1}$-negligible set
\beq\label{lambdaeta}
P(\Om_{\eta,m};\R^N\setminus\cc_\eta)\leq P(E;\R^N\setminus\cc_\eta)+\Lambda|\Om_{\eta,m}\Delta E|\,.
\eeq
In particular $\Om_{\eta,m}$ is a restricted $(\Lambda,r_0)$-minimizer according to Definition~\ref{def:lambdamin}, choosing for instance $r_0:=$dist$(\Om_0,\pa B_R(0))$. 

 Given $E$ as above, from Step 1 we get
\begin{align}
P(\Om_{\eta,m};\R^N\setminus\cc_\eta)
&\leq P(E\cap\Om_0;\R^N\setminus\cc_\eta)+\Lambda'||E\cap\Om_0|-|\Om_{\eta,m}|| \nonumber\\
&=\H^{N-1}(\partial^*E\cap(\Om_0\setminus\cc_\eta))+\H^{N-1}(\pa^*E\cap\pa^*\Om_0\cap\{\nu_E=\nu_{\Om_0}\}\setminus\cc_\eta)\label{ugual1.5}\\
&\quad+\H^{N-1}(\pa^*\Om_0\cap E^{(1)})+\Lambda'||E\cap\Om_0|-|\Om_{\eta,m}||\,.\nonumber
\end{align}
Then, using \eqref{ugual0.1} and the condition $\partial^*E\cap\cc_\eta\subset\partial^*(\Omega_0\setminus\cc_\eta)\cap\cc_\eta$, we have
\begin{align*}
&\H^{N-1}((\pa^*\Om_0\cap\cc_\eta)\setminus\cc)+\H^{N-1}(\pa^*\Om_0\setminus\cc_\eta)=P(\Om_0;\R^N\setminus\cc)\\
&\leq P(E\cup\Om_0;\R^N\setminus\cc)+\Lambda_0|E\setminus\Om_0|\\
&=\H^{N-1}((\pa^*\Om_0\cap\cc_\eta)\setminus\cc)+\H^{N-1}((\pa^*\Om_0\cap E^{(0)})\setminus\cc_\eta)\\
&\qquad+\H^{N-1}(\pa^*E\cap\pa^*\Om_0\cap\{\nu_E=\nu_{\Om_0}\}\setminus\cc_\eta)+\H^{N-1}(\pa^*E\setminus{\overline\Om_0})+\Lambda_0|E\setminus\Om_0|\,.
\end{align*}
Simplifying the above inequality, we get
$$
\H^{N-1}(\pa^*\Om_0\cap E^{(1)})+\H^{N-1}(\pa^*E\cap\pa^*\Om_0\cap\{\nu_E=-\nu_{\Om_0}\})\leq
\H^{N-1}(\pa^*E\setminus{\overline\Om_0})+\Lambda_0|E\setminus\Om_0|\,.
$$
Combining this inequality with \eqref{ugual1.5} we conclude that
\begin{align*}
P(\Om_{\eta,m};\R^N\setminus\cc_\eta)
&\leq \H^{N-1}(\partial^*E\cap(\Om_0\setminus\cc_\eta))+\H^{N-1}(\pa^*E\cap\pa^*\Om_0\cap\{\nu_E=\nu_{\Om_0}\}\setminus\cc_\eta)\\
&\quad+\H^{N-1}(\pa^*E\setminus{\overline\Om_0})+\Lambda'||E\cap\Om_0|-|\Om_{\eta,m}||+\Lambda_0|E\setminus\Om_0|\\
&\leq P(E;\R^N\setminus\cc_\eta)+\max\{\Lambda',\Lambda_0\}|E\Delta\Om_{\eta,m}|
\end{align*}
so that the claim is proven.

\par\noindent
{\bf Step 3.} ({\it Monotonicity and Lipschitz equicontinuity of the isoperimetric profiles}). We claim that $I_\eta$ (see  \eqref{ieta}) is strictly increasing in $[0,m_\eta]$ for all $\eta\in[0,\bar\eta]$. Moreover,  for any fixed $0<m'<m''<m_0$  and for  $0<\tilde\eta\leq\bar\eta$ be as in \eqref{lunga}, 
we claim that   for $\eta\in[0,\tilde\eta]$, $I_\eta$  is $\Lambda'$-Lipschitz in $[m',m'']$, where $\Lambda'$ is as in Step 2.

We postpone the proof to Appendix~B.

\par\noindent
{\bf Step 4.} ({\it A formula for $I'_\eta$}). Fix $0<m'<m''<m_0$ and let $0<\tilde\eta\leq\bar\eta$ be as in \eqref{lunga}. For $m\in[m',m'']$ and $\eta\in[0,\tilde\eta]$  we set $\Sigma_{\eta,m}:=\overline{\pa\Om_{\eta,m}\setminus\cc_\eta}$ and denote by $\Sigma^*_{\eta,m}$ the regular free part of $\Sigma_{\eta,m}$, that is $\Sigma^*_{\eta,m}:=\pa^*\Om_{\eta,m}\setminus(\pa\Om_0\cup\cc_\eta)$. Observe that by \eqref{ugual1.05} $\Sigma^*_{\eta,m}$ is nonempty. We recall that by a standard first variation argument $\Sigma^*_{\eta,m}$ is a constant mean curvature manifold. We denote by $H_{\Sigma^*_{\eta,m}}$ such a mean curvature.

We claim that at any point  $m\in(m',m'')$ of differentiability for  $I_\eta$, $\eta\in[0,\tilde\eta]$, we have
\beq\label{isoprof1}
I'_\eta(m)=H_{\Sigma^*_{\eta,m}}\,.
\eeq
 To this end we fix $x\in\Sigma^*_{\eta,m}$ and a ball $B_r(x)\subset\!\subset\Om_0\setminus\cc_\eta$ such that $\Sigma^*_{\eta,m}\cap B_r(x)=\pa\Om_{\eta,m}\cap B_r(x)$. Let $X$ be a smooth vector field compactly supported in $B_r(x)$ such that 
$$
\int_{\Sigma^*_{\eta,m}}X\cdot\nu_{\Om_{\eta,m}}\,d\H^{N-1}\not=0\,.
$$ 
 Consider now the flow associated with $X$, that is the solution in $\R^N\times\R$ of
\[
\begin{cases}
\frac{\pa\Phi}{\pa t}(x,t)=X(\Phi(x,t)) \cr
\Phi(x,0)=x
\end{cases}
\]
and set $\Om_{\eta,m}(t):=\Phi(\Om_{\eta,m},t)$. Clearly, $P(\Om_{\eta,m}(t));\R^N\setminus\cc_\eta)\geq I_\eta(|\Om_{\eta,m}(t)|)$, with the equality at $t=0$. Therefore
$$
\frac{d}{dt}\Big(P(\Om_{\eta,m}(t));\R^N\setminus\cc_\eta)\Big)_{\big|_{t=0}}=\frac{d}{dt}\Big(I_\eta(|\Om_{\eta,m}(t)|)\Big)_{\big|_{t=0}}\,.
$$
Note that
\begin{align*}
&\frac{d}{dt}\Big(P(\Om_{\eta,m}(t));\R^N\setminus\cc_\eta\Big)_{\big|_{t=0}}
=H_{\Sigma^*_{\eta,m}}\int_{\Sigma^*_{\eta,m}}X\cdot\nu_{\Om_{\eta,m}}\,d\H^{N-1}\,, \\
&\frac{d}{dt}\Big(I_\eta(|\Om_{\eta,m}(t)|)\Big)_{\big|_{t=0}}=I'_\eta(m)\frac{d}{dt}\big(|\Om_{\eta,m}(t)|\big)_{\big|_{t=0}}=
I'_\eta(m)\int_{\Sigma^*_{\eta,m}}X\cdot\nu_{\Om_{\eta,m}}\,d\H^{N-1}\,,
\end{align*}
where we have used the well known formulas for the first variation of the perimeter and the volume, see for instance \cite[Chap. 17]{maggi-book}. Thus \eqref{isoprof1} follows.
\par\noindent
{\bf Step 5.} ({\it A weak Young's law}). Fix $0<m'<m''<m_0$ and let $0<\tilde\eta\leq\bar\eta$ be as in \eqref{lunga}. We claim that the if $\eta\in[0,\tilde\eta]$ and $m\in[m',m'']$, the following weak Young's law holds:
\beq\label{wyl1}
\nu\cdot\nu_{\cc_\eta}(x)\leq0\quad \text{whenever $x\in\Sigma_{\eta,m}\cap\cc_{\eta}$ and $\nu\in N_x\Sigma_{\eta,m}$.}
\eeq
Let $x\in\Sigma_{\eta,m}\cap\cc_{\eta}$ and $\nu\in N_x\Sigma_{\eta,m}$. 
Without loss of generality, by rotating the coordinate system if needed, we may assume that $x=0$, $\nu_{\cc_\eta}(0)=e_N$ and $\nu=(\nu_1,0,\dots,0,\nu_N)$ with $\nu_1\leq0$. Note that \eqref{wyl1} will be proven if we show that 
\beq\label{nu1}
\text{$\nu_N\geq0$ implies that $\nu_N=0\,.$}
\eeq
 Set $E_h=\frac{1}{h}\Om_{\eta,m}$, $h\in\N$ and $\cc_{\eta,h}=\frac{1}{h}\cc_\eta$ and observe that, since $\nu_1\leq0$ and $\nu_N\geq0$, $E_h\subset\{x_1\geq0\}$.  Note also that by \eqref{lambdaeta} we have that
\beq\label{lambdaetah}
P(E_h;\R^N\setminus\cc_{\eta,h})\leq P( G;\R^N\setminus\cc_{\eta,h})+\frac{1}{h}\Lambda|E_h\Delta G|
\eeq
for all sets $G\subset B_{hR}(0)\setminus\cc_{\eta,h}$ such that $\partial^*G\cap\cc_{\eta,h}\subset \partial^*E_h\cap\cc_{\eta,h}$ up to a $\H^{N-1}$-negligible set.
Using the density estimate proved in Proposition~\ref{densityestimates} and passing possibly to a not relabelled subsequence we may assume that  $E_h$ converge in $L^1_{loc}(\R^N)$ to some set $E\subset\mathscr{H}\cap\{x_1>0\}$ (see \eqref{semisp}) of locally finite perimeter and that $\mu_{E_h}\wtos \mu_E$ as Radon measures in $\R^N$, {see \eqref{gg} for the definition of $\mu_E$}. Finally, given $r>0$, from the volume density estimate in Proposition~\ref{densityestimates} we get that for $h$ large enough $|E_h\cap B_r(0)|\geq cr^N$ and thus, passing to the limit, we have $|E\cap B_r(0)|\geq cr^N$ for all $r>0$. This in turn implies that {$0\in\pa^eE\subset\pa E$}.
Since each $E_h$ is a $\frac{\Lambda}{h}$-minimizer, by  Theorem~\ref{th:compact} we have that $E$ is a $0$-minimizer that is
\beq\label{calim2}
P(E;B_r(x_0))\leq P(F; B_r(x_0))\text{ for any $F, B_r(x_0)$ s.t. $E\Delta F\subset\!\subset B_r(x_0)\subset\subset\mathscr H$\,.}
\eeq

We claim  that also the minimality with respect to inner perturbations passes to the limit. More precisely
we want to show that $E$ satisfies the following minimality property: for any cube $Q_r(0)=(-r,r)^N$
and any open set with Lipschitz boundary $V\subset\subset Q_r(0)$   
\beq\label{calim4}
\H^{N-1}(\pa E\cap\pa V\cap\mathscr H)=0\quad\text{implies}\quad
P(E;\mathscr H\cap Q_r(0))\leq P(E\setminus V;\mathscr H\cap Q_r(0))\,.
\eeq
We postpone the proof of this claim to Appendix B.

We now denote by $\widehat E=E\cup R(E)$ where $R$ denotes the reflection map $R(x',x_N)=(x',-x_N)$. From \eqref{calim4} one can easily check that given an open set with Lipschitz boundary $V\subset\!\subset Q_r(0)$ such that $\H^{N-1}(\pa\widehat E\cap\pa V)=0$ we have 
$$
P(\widehat E; Q_r(0))\leq P(\widehat E\setminus V; Q_r(0))\,.
$$
We claim that the connected component $\Gamma$ of $\pa\widehat E$ containing $0$ coincides with $\{x_1=0\}$. In turn this implies \eqref{nu1}.

To see this assume first that  $\Gamma$ intersects $\{x_1=0\}\setminus\{x_N=0\}$ at some point $x_0$. Then, by Theorem~\ref{rm:regola}-(iii)  $\Gamma$ is a smooth minimal surface in a neighborhood of $x_0$. In turn,  by the Strong Maximum Principle Theorem~\ref{th:SMP} it coincides with the hyperplane $\{x_1=0\}$ in a neighborhood of $x_0$. The same argument shows that $\Gamma\cap\{x_1=0\}$ is both relatively closed and open in $\{x_1=0\}$ and therefore $\Gamma=\{x_1=0\}$.

Observe that $\pa\widehat E\cap\{x_1=0\}\subset\{x_1=0\}\cap\{x_N=0\}$ and thus in particular $\H^{N-1}(\pa \widehat E\cap\{x_1=0\})=0$. We may then apply Lemma~\ref{th:maxprin} to conclude that $0\not\in\pa\widehat E$, thus getting a contradiction.

\par\noindent
{\bf Step 6.} ({\it Convergence of the isoperimetric profiles}). We claim that 
\beq\label{ugual2}
\lim_{\eta\to0}I_\eta(m)=I_0(m)\,\,\text{for all $m\in[0,m_0)\quad$ and}\quad \lim_{\eta\to0}I_\eta(m_\eta)=I_0(m_0)\,.
\eeq
Let $\eta_n$ be a sequence converging to zero such that $I_{\eta_n}(m)\to\liminf_{\eta\to0}I_\eta(m)$. Since the perimeters of $\Om_{\eta_n,m}$ are equibounded, see \eqref{supsup},  up to a subsequence we may assume that $\Om_{\eta_n,m}$ converge in $L^1$ to a set of finite perimeter $E\subset\Om_0$ with $|E|=m$. Thus, by lower semicontinuity,
\beq\label{ugual3}
I_0(m)\leq P(E;\R^N\setminus\cc)\leq\liminf_{n}P(\Om_{\eta_n,m},\R^N\setminus\cc_{\eta_n})=\liminf_{\eta\to0}I_\eta(m)\,.
\eeq
Recall that $\Om_{0,m}$ denotes a minimizer for the problem defining $I_0(m)$. Since
$$
I_\eta(m-|\Om_{0,m}\cap\cc_\eta|)\leq P(\Om_{0,m};\R^N\setminus\cc_\eta)\leq I_0(m)\,,
$$
using the equilipschitz continuity of $I_\eta$ proved in Step 3, by letting $\eta$ tend to $0$ in the previous inequality and recalling \eqref{ugual3} we obtain the first equality in \eqref{ugual2}. The second one follows simply from the fact that  $\Om_{\eta,m_\eta}=\Om_0\setminus\cc_\eta$.

Note that the above argument shows in particular that if $m\in(0,m_0)$, $\eta_n\to0$ and $\Om_{\eta_n,m}$ is a sequence converging in $L^1$ to a set $E$, then $E$ coincides with a minimizer $\Om_{0,m}$.
\par\noindent
{\bf Step 7.} ($I_0=I_{\mathscr H}$). We set 
\beq\label{ugual4}
I_{\mathscr H}(m)=N\Big(\frac{\omega_N}{2}\Big)^{\frac{1}{N}}m^{\frac{N-1}{N}}\,,
\eeq 
that is the isoperimetric profile of half spaces. We claim that 
\beq\label{ugual4.1}
I_0(m)=I_{\mathscr H}(m)\qquad \text{for all $m\in[0,m_0]$}\,.
\eeq
To this end we fix $0<m'<m''<m_0$  and let $0<\tilde\eta\leq\bar\eta$ be as in \eqref{lunga}. Recall that by Step 2 for all $\eta\in[0,\tilde\eta]$,  $\Om_{\eta,m}$ is a restricted $(\Lambda,r_0)$-minimizer for all $m\in[m',m'']$. We claim that for any such $\eta$ if
 $x_0\in\Sigma^+_{\eta,m}$ then $\Sigma_{\eta,m}$ is of class $C^{1,1}$ in a neighborhood of $x_0$. Here $\Sigma^+_{\eta,m}$ is defined as in Definition~\ref{kappapiu} with $\Sigma$ and $\cc$ replaced by $\Sigma_{\eta,m}$ and $\cc_\eta$. 
Indeed, observe first that if $x_0\in\Sigma^+_{\eta,m}$ then by Theorem~\ref{rm:regola} $\Sigma_{\eta,m}$ is of class $C^{1,\alpha}$ in a neighborhood of $x_0$. Moreover, if $x_0\in\Omega_0$ then, since $H_{\Sigma_{\eta,m}}$ is constant in a neighborhood of $x_0$, we have that in fact $\Sigma_{\eta,m}$ is analytic in such a neighborhood. 

If instead $x_0\in\pa\Om_0$, since $\Om_0$ is a $(\Lambda,r_0)$-minimizer and $\pa\Om_0$ lies on one side with respect to $\Sigma_{\eta,m}$ which is of class $C^{1,\alpha}$ in a neighborhood of $x_0$, again by Theorem~\ref{rm:regola} we infer that $\pa\Om_0$ is of class $C^{1,\alpha}$, hence analytic in a neighborhood of $x_0$. The claim then follows from Proposition~\ref{strezi}.

To prove \eqref{ugual4.1} observe that the very same argument  of \eqref{412} (with $\widetilde\Sigma_\eta$ replaced by $\Sigma_{\eta,m}^+$ and $\theta_\eta$ replaced by $\pi/2$) yields that
\beq\label{ugual6}
\int_{\Sigma_{\eta,m}^+\setminus\cc_\eta}H_{\Sigma_{\eta,m}}^{N-1}\,d\H^{N-1}\geq (N-1)^{N-1}N\frac{\omega_N}{2}\,. 
\eeq 
Indeed this argument only requires that $\Sigma_{\eta,m}$ is of class $C^{1,1}$ in a neighborhood of $\Sigma_{\eta,m}^+$ and that \eqref{cgr-main14-} holds. Recall that the latter condition with $\theta_\eta=\pi/2$ is ensured by Step 5. Observe also that if $\Sigma_{\eta,m}^+$ intersects $\pa\Om_0$ in a set of positive $\H^{N-1}$ measure then for $\H^{N-1}$-a.e. $x$ on such a set 
\beq\label{ugual6.2}
H_{\Sigma_{\eta,m}}(x)=H_{\pa\Om_0}\leq H_{\Sigma_{\eta,m}^*}
\eeq 
where $\Sigma_{\eta,m}^*$ is the regular free part defined in Step 4 and the inequality follows from Proposition~\ref{strezi}. Here, with a slight abuse of notation, we denote by $H_{\pa\Om_0}$ the constant curvature of $\pa^*\Om_0\setminus\cc$. Therefore the previous inequality, \eqref{ugual6} and \eqref{isoprof1} imply in particular that for a.e. $m\in(m',m'')$  and for all $\eta\in[0,\tilde\eta]$ 
\beq\label{ugual4.2}
\begin{split}
I_\eta(m)(I'_\eta(m))^{N-1}&=P(\Om_{\eta,m};\R^N\setminus\cc_\eta) H_{\Sigma_{\eta,m}^*}^{N-1}\\
&\geq (N-1)^{N-1}N\frac{\omega_N}{2}=I_{\mathscr H}(m)(I_{\mathscr H}'(m))^{N-1}\,,
\end{split}
\eeq
where the last equality follows from \eqref{ugual4}.  Recalling that   $I_\eta$ is  Lipschitz in $[m',m'']$ and thus absolutely continuous, raising the above inequality to the power $\frac{1}{N-1}$ and integrating  in $[m,m'']$, for any $m\in(m',m'')$ we get
$$
I_\eta(m'')^{\frac{N}{N-1}}-I_\eta(m)^{\frac{N}{N-1}}
\geq I_{\mathscr H}(m'')^{\frac{N}{N-1}}-I_{\mathscr H}(m)^{\frac{N}{N-1}}
$$
for all $\eta\in[0,\tilde\eta]$.
Passing to the limit as $\eta\to0$ and using Step 6 we get
\beq\label{ugual5}
I_0(m'')^{\frac{N}{N-1}}-I_0(m)^{\frac{N}{N-1}}
\geq I_{\mathscr H}(m'')^{\frac{N}{N-1}}-I_{\mathscr H}(m)^{\frac{N}{N-1}}
\eeq
for all $0<m<m''<m_0$.
Observe now that $\lim_{m''\to m_0}I_0(m'')=I_0(m_0)$ (this follows by a simple semicontinuity argument and by the fact that $I_0$ is increasing). Thus, passing to the limit in \eqref{ugual5} as $m''\to m_0$,
 recalling that by assumption $I_0(m_0)=I_{\mathscr H}(m_0)$ and that by Theorem~\ref{th:isoperim} $I_0(m)\geq I_{\mathscr H}(m)$, we get $I_0(m)= I_{\mathscr H}(m)$ for all $m\in(0,m_0)$, as claimed. 
\par\noindent
{\bf Step 8.} ({\it $\pa\Om_0\cap\cc$ is flat}). In this step we prove that  $\pa\Om_0\cap\cc$ lies on a  hyperplane~$\Pi$. 

To this aim we start by showing that
\beq\label{ugual6.1}
\big(I_{\eta}^{\frac{N}{N-1}}\big)'\to \big(I_{\mathscr H}^{\frac{N}{N-1}}\big)'\qquad\text{in $L^1_{loc}(0,m_0)$}\,.
\eeq
  Indeed, given $0<m'<m''<m_0$ from \eqref{ugual4.2} and the fact that $I_\eta\to I_{\mathscr H}$, we have that for a.e. $m\in(m',m'')$ and $\eta\in[0,\tilde\eta]$, $(I_{\eta}^{\frac{N}{N-1}}\big)'(m) \geq\big(I_{\mathscr H}^{\frac{N}{N-1}}\big)'(m)$ and 
$$
\int_{m'}^{m''}\big(I_{\eta}^{\frac{N}{N-1}}\big)'(t)\,dt \to \int_{m'}^{m''}\big(I_{\mathscr H}^{\frac{N}{N-1}}\big)'(t)\,dt\quad\text{as $\eta\to0$\,.}
$$
Hence, \eqref{ugual6.1}  follows.

Returning to the proof of the flatness of $\pa\Om_0\cap\cc$, observe that by a simple diagonal argument we can construct two sequences $m_n\to m_0$ and $\eta_n\to0$  such that $\Om_{\eta_n,m_n}$ is a $\Lambda_n$-minimizer for some $\Lambda_n>0$ (possibly going to $+\infty$) and
$$
I_{\eta_n}(m_n)\to I_0(m_0),\quad\big(I_{\eta_n}^{\frac{N}{N-1}}\big)'(m_n)\to  \big(I_{\mathscr H}^{\frac{N}{N-1}}\big)'(m_0)=N\Big(N\frac{\omega_N}{2}\Big)^{\frac{1}{N-1}}\,.
$$
 This is possible thanks to Step 2, Step 6 and \eqref{ugual6.1}. Given $\e>0$, let $\delta>0$ be as in Theorem~\ref{th:cgr-main1} with $\theta_0=\pi/2$. Recall that $\delta$ depends only on $\e$ and on diam$(\Om_0)$. Recall also that $\Sigma_{\eta_n,m_n}$ is of class $C^{1,1}$ in a neighborhood of $\Sigma^+_{\eta_n,m_n}$, thanks to Step 7. Then from the above convergence, arguing as in the proof of \eqref{412} with $\widetilde\Sigma_\eta$ replaced by $\Sigma^+_{\eta_n,m_n}$, and recalling that the weak Young's inequality \eqref{wyl1} holds for $\Sigma_{\eta_n,m_n}$, we have that for $n$ large
\[
\begin{split}
\frac{N\omega_N}{2}\leq\mathcal K^+(\Sigma_{\eta_n,m_n})&\leq(N-1)^{1-N}\int_{\Sigma^+_{\eta_n,m_n}}H^{N-1}_{\Sigma_{\eta_n,m_n}}\,d\H^{N-1}\\
&\leq(N-1)^{1-N}P(\Om_{\eta_n,m_n};\R^N\setminus\cc_{\eta_n}) H_{\Sigma_{\eta_n,m_n}^*}^{N-1}\\
&=\Big[\frac{1}{N} \big(I_{\eta_n}^{\frac{N}{N-1}}\big)'(m_n)\Big]^{N-1}<\frac{N\omega_N}{2}+\delta\,.
\end{split}
\]
Note that in the third inequality above we have used \eqref{ugual6.2}.
Thus from Theorem~\ref{th:cgr-main1} we get that for $n$ sufficiently large $\wid(\Sigma_{\eta_n,m_n}\cap\cc_{\eta_n}):=\e_n\to0$ and more precisely that there exists $x_n\in\pa\cc_{\eta_n}$ such that 
\beq\label{ugual7.5}
\Sigma_{\eta_n,m_n}\cap\cc_{\eta_n}\subset\{x: -\e_n\leq (x-x_n)\cdot\nu_{\cc_{\eta_n}}(x_n)\leq0\}\,.
\eeq
Observe that, up to a not relabelled subsequence, 
\beq\label{ugual7.51}
x_n\to\overline{x}\in\cc,\qquad \nu_{\cc_{\eta_n}}(x_n)\to\overline\nu\in N_{\overline x}(\cc)\,.
\eeq
Denote by $\Pi$ the support hyperplane passing through $\overline x$ and orthogonal to $\overline\nu$ and by $\Pi^{\pm}$ the half spaces  $\{x:\, (x-\overline x)\cdot\overline\nu\gtrless0\}$. We claim that $\pa\Om_0\cap\cc\subset\Pi$ up to a set of $\H^{N-1}$-measure zero.

To prove the claim we first show that, passing possibly to a further subsequence,  
\beq\label{ugual7.55}
\pa\Om_{\eta_n,m_n}\cap\cc_{\eta_n}\to K\quad \text{ for some $K\subset\pa\Om_0\cap\cc$ s.t. $\H^{N-1}(\pa\Om_0\cap\cc \setminus K)=0$}\,.
\eeq
To this aim observe first that since $\cc_{\eta_n}\cap\overline{B_R(0)}$ is a sequence of convex sets converging to the convex set $\cc\cap\overline{B_R(0)}$ in the sense of Kuratowski then $P(\cc_{\eta_n}\cap\overline{B_R(0)})\to P(\cc\cap\overline{B_R(0)})$. This in turn yields that 
$\H^{N-1}\res\pa(\cc_{\eta_n}\cap\overline{B_R(0)})\stackrel{*}{\wto}\H^{N-1}\res\pa(\cc\cap\overline{B_R(0)})$
and in particular that
\beq\label{ugual17}
\H^{N-1}\res\pa\cc_{\eta_n}\stackrel{*}{\wto}\H^{N-1}\res\pa\cc\qquad\text{in $B_R(0)$\,.}
\eeq
%
 We claim that
\beq\label{ugual10}
\limsup_{n}\H^{N-1}(\pa\Om_{\eta_n,m_n}\cap\cc_{\eta_n})\leq\H^{N-1}(K)\,.
\eeq
To this aim set $K_\sigma=K+\overline{B_\sigma(0)}\subset B_R(0)$ for $\sigma>0$ sufficiently small. Then for $n$ sufficiently large $\pa\Om_{\eta_n,m_n}\cap\cc_{\eta_n}\subset K_\sigma\cap\pa\cc_{\eta_n}$, hence
$$
\H^{N-1}(\pa\Om_{\eta_n,m_n}\cap\cc_{\eta_n})\leq\H^{N-1}(K_\sigma\cap\pa\cc_{\eta_n})\,.
$$
From this inequality  we then have
$$
\limsup_{n}\H^{N-1}(\pa\Om_{\eta_n,m_n}\cap\cc_{\eta_n})\leq\limsup_{n}\H^{N-1}(K_\sigma\cap\pa\cc_{\eta_n})
\leq \H^{N-1}(K_\sigma\cap\pa\cc)\,,
$$
where in the last inequality we have used \eqref{ugual17}. Then \eqref{ugual10} follows letting $\sigma\to0$.
On the other hand, $\Om_{\eta_n,m_n}\to\Om_0$ in $L^1$ and by the lower semicontinuity of perimeter and \eqref{ugual10}
\begin{align*}
P(\Om_0)=I_0(m_0)+\H^{N-1}(\pa^*\Om_0\cap\cc)&\leq\liminf_{n}P(\Om_{\eta_n,m_n})\\
&=\liminf_{n}\big[I_{\eta_n}(m_n)+\H^{N-1}(\pa\Om_{\eta_n,m_n}\cap\cc_{\eta_n})\big]\\
&\leq I_0(m_0)+\H^{N-1}(K)\,.
\end{align*}
Recall that by the volume estimate Proposition~\ref{densityestimates}-(ii)  $\pa^*\Om_0\cap\cc$ coincides $\H^{N-1}$-a.e. with $\pa\Om_0\cap\cc$. Thus the above inequality implies that $K$ coincides $\H^{N-1}$-a.e. with  $\pa\Om_0\cap\cc$. Hence, \eqref{ugual7.55} follows.

We finally claim that for $n$ large
\beq\label{ugual8}
\pa\Om_{\eta_n,m_n}\cap\cc_{\eta_n}\subset\{x: -\e\leq (x-x_n)\cdot\nu_{\cc_{\eta_n}}(x_n)\leq0\}\,.
\eeq
To prove this we argue by contradiction assuming that for infinitely many $n$ there exists $y_n\in\pa\Om_{\eta_n,m_n}\cap\cc_{\eta_n}$ such that $(y_n-x_n)\cdot\nu_{\cc_{\eta_n}}(x_n)<-\e_n$. Observe that, if this is the case for all such $n$, 
\beq\label{ugual8.5}
F_n:=\pa\cc_{\eta_n}\cap\{x:  (x-x_n)\cdot\nu_{\cc_{\eta_n}}(x_n)<-\e_n\}\subset\pa\Om_{\eta_n,m_n}\cap\cc_{\eta_n}\,.
\eeq
Indeed if not there exists $z_n\in F_n\setminus\pa\Om_{\eta_n,m_n}$ and in turn a continuous path $\gamma\subset F_n$ connecting $z_n$ to $y_n$ (recall that $\cc_{\eta_n}$ is bounded). But then this arc must contain a point in $\pa_{\cc_{\eta_n}}(\pa\Om_{\eta_n,m_n}\cap\cc_{\eta_n})\subset \Sigma_{\eta_n,m_n}\cap\cc_{\eta_n}$, which contradicts \eqref{ugual7.5}. Therefore, from \eqref{ugual8.5}, \eqref{ugual7.51}  and \eqref{ugual7.55} we have that 
$$
\pa\cc\cap\{x:\, (x-\overline x)\cdot\overline\nu<0\}=\pa\cc\cap\Pi^-\subset \pa\Om_0\cap\cc\,.
$$
Then, let $\bar t:=\min\{t\leq0:\, \Pi+t\overline\nu\cap\cc\not=\emptyset\}$ and set for $t\in(\bar t,0)$, $\cc^t:=\cc\cap(\Pi^++t\overline\nu)$. Note that, from the above inclusion,  $P(\Om_0\cup(\cc\setminus\cc^t);\R^N\setminus\cc^t)=P(\Om_0;\R^N\setminus\cc)=I_{\mathcal H}(m_0)$, but this contradicts \eqref{isoperim1} since $\Om_0\cup(\cc\setminus\cc^t)>m_0$. Hence \eqref{ugual8} holds for $n$ large enough.

Finally, from \eqref{ugual8} and \eqref{ugual7.55} we have that  $\pa\Om_0\cap\cc\subset\Pi$ up to a set of vanishing $\H^{N-1}$ measure.

\par\noindent
{\bf Step 9.} ({\it Conclusion}). In this final step we show that $\Om_0$ is a half ball.

To this aim we fix $m\in(0,m_0)$ and a sequence $\eta_n\to0$ such that
\beq\label{ugual12}
I_{\eta_n}(m)\to I_0(m)=I_{\mathscr H}(m),\quad\big(I_{\eta_n}^{\frac{N}{N-1}}\big)'(m)\to  \big(I_{\mathscr H}^{\frac{N}{N-1}}\big)'(m)=N\Big(N\frac{\omega_N}{2}\Big)^{\frac{1}{N-1}}\,.
\eeq
Owing to Steps 6-8 we can find such a sequence for a.e. $m\in(0,m_0)$.
Thanks to Step 2, we may assume that there exists $\Lambda>0$ such that $\Om_{\eta_n,m}$ is a $\Lambda$-minimizer for all $n$. By Theorem~\ref{rm:regola}-(ii) this implies in particular that $|H_{\Sigma_{\eta_n,m}}|\leq\Lambda$ $\H^{N-1}$-a.e. on $\pa^*\Om_{\eta_n,m}\setminus\cc_{\eta_n}$.
Arguing as in the previous step, see also the proof of \eqref{412}, we have then
\beq\label{ugual13}
\begin{split}
\frac{N\omega_N}{2}\leq\mathcal K^+(\Sigma_{\eta_n,m})
&=\int_{\Sigma^+_{\eta_n,m}}K_{\Sigma_{\eta_n,m}}\,d\H^{N-1}
\leq(N-1)^{1-N}\int_{\Sigma^+_{\eta_n,m}}H^{N-1}_{\Sigma_{\eta_n,m}}\,d\H^{N-1}\\
&\leq(N-1)^{1-N}P(\Om_{\eta_n,m};\R^N\setminus\cc_{\eta_n}) H_{\Sigma_{\eta_n,m}^*}^{N-1}\\
&=\Big[\frac{1}{N} \big(I_{\eta_n}^{\frac{N}{N-1}}\big)'(m)\Big]^{N-1}\to\frac{N\omega_N}{2}\,,
\end{split}
\eeq
where we recall $K_{\Sigma_{\eta_n,m}}$ is the Gaussian curvature of $\Sigma_{\eta_n,m}$. We start by observing that, since $H_{\Sigma_{\eta_n,m}}(x)\leq H_{\Sigma_{\eta_n,m}^*}$ for $\H^{N-1}$-a.e. $x\in\Sigma^+_{\eta_n,m}$, from the third inequality in \eqref{ugual13} we have in particular that 
\beq\label{ugual14}
\lim_{n}\H^{N-1}(\Sigma^+_{\eta_n,m})=\lim_{n}P(\Om_{\eta_n,m};\R^N\setminus\cc_{\eta_n})=\H^{N-1}(\pa^*\Om_{0,m}\setminus\cc)\,.
\eeq
Note that indeed $H_{\Sigma_{\eta_n,m}}$ may only take the constant values $H_{\pa\Om_0}$ or $H_{\Sigma_{\eta_n,m}^*}$. Then, again from \eqref{ugual13}, it follows that
\beq\label{ugual15}
\text{either $\quad H_{\pa\Om_0}=H_{\Sigma_{\eta_n,m}^*}\quad$ or}\quad \H^{N-1}\big((\pa\Om_{\eta_n,m}\cap\pa\Om_{0})\setminus\cc_{\eta_n}\big)\to0\,.
\eeq
Fix now $x\in\pa^*\Om_{0,m}$. Since $\Om_{\eta_n,m}\to\Om_{0,m}$ in $L^1$ and $P(\Om_{\eta_n,m};\R^N\setminus\cc_{\eta_n})\to P(\Om_{\eta_n,m};\R^N\setminus\cc)$ thanks to the first condition in \eqref{ugual12}, we have that $\H^{N-1}\res\pa^*\Om_{\eta_n,m}\stackrel{*}{\wto}\H^{N-1}\res\pa^*\Om_{0,m}$ in $\R^N\setminus\cc$.
In turn, by Theorem~\ref{th:cicaleo} it follows that, up to rotations and translations, there exist a $(N-1)$-dimensional ball $B'\subset\R^{N-1}$, functions $\varphi_n,\varphi\in W^{2,p}(B')$, and $r>0$ such that $x\in B'\times(-r,r)$ and
\begin{align*}
&\pa\Om_{\eta_n,m}\cap(B'\times(-r,r))=\{(x',\varphi_n(x')):\,x'\in B'\},\\
&\pa\Om_{0,m}\cap(B'\times(-r,r))=\{(x',\varphi(x')):\,x'\in B'\},\\
&\varphi_n\wto\varphi\quad\text{in $W^{2,p}(B')$ for all $p\geq1$},\\
&H_{\Sigma_{\eta_n,m}}(x',\varphi_n(x'))\wto H_{\Sigma_{0,m}}(x',\varphi(x'))\quad\text{in $L^p(B')$ for all $p\geq1$\,,}
\end{align*}
Recalling \eqref{ugual15}   the fourth condition above implies that 
$$H_{\Sigma_{\eta_n,m}}(x',\varphi_n(x'))\to H_{\Sigma_{0,m}}(x',\varphi(x'))\equiv H^{N-1}_{\Sigma^*_{0,m}}$$ strongly in $L^p(B')$ for all $p\geq1$. In turn, see for instance \cite[Lemma 7.2]{AFM}, this implies
\beq\label{ugual16}
\varphi_n\to\varphi\quad\text{strongly in $W^{2,p}(B')$ for all $p\geq1$\,.}
\eeq
Note also that, since from \eqref{ugual14} $\H^{N-1}(\Sigma_{\eta_n,m}\setminus\Sigma^+_{\eta_n,m})\to0$, we have that for every $y\in (B'\times(-r,r))\cap\Sigma_{0,m}$ there exists a sequence $y_n\in (B'\times(-r,r))\cap\Sigma^+_{\eta_n,m}$ such that $y_n\to y$. Therefore, using the $L^1$ convergence of $\Om_{\eta_n,m}$ to $\Om_{0,m}$ we conclude that the tangent hyperplane to $\pa\Om_{0,m}$ at $y$ is also a support hyperplane. Thus we have shown that all principal curvatures at any point in $(B'\times(-r,r))\cap\pa\Sigma_{0,m}$ are nonnegative. Thus, from the second inequality in \eqref{ugual13}, recalling \eqref{ugual14} and \eqref{ugual16} we may conclude that
$$
K_{\Sigma_{0,m}}=(N-1)^{1-N}H^{N-1}_{\Sigma_{0,m}}=(N-1)^{1-N}H^{N-1}_{\Sigma^*_{0,m}}\quad\text{on $(B'\times(-r,r))\cap\Sigma_{0,m}$}\,.
$$
The equality above implies that $\Sigma_{0,m}\cap(B'\times(-r,r)))$ is umbilical. Hence $\pa^*\Om_{0,m}\setminus\cc$ is umbilical, thus each connected component of $\pa^*\Om_{0,m}\setminus\cc$ lies on  a sphere of radius $R_m=(N-1)/H_{\Sigma^*_{0,m}}$.  Consider the unique unbounded connected component of $U:=\R^N\setminus\overline{\Om_{0,m}}$. Then, recalling Step 8, $\pa U\setminus\cc$ is contained in a sphere of radius $R_m$ intersecting $\cc$ on $\Pi$. Thus $\pa U\setminus\cc$ is a spherical cap and $\Om_{0,m}$ is contained in the region enclosed by  $\pa U\setminus\cc$ and $\Pi$. In particular $\Om_{0,m}$ is contained in the half space $\Pi^+$ determined by $\Pi$ not containing $\cc$. Since $P(\Om_{0,m};\Pi^+)=P(\Om_{0,m};\R^N\setminus\cc)=N\big(\frac{\omega_N}{2}\big)^{\frac{1}{N}}m^{\frac{N-1}{N}}$, by Theorem~19.21 in \cite{maggi-book} for a.e. $m$ we conclude that for such $m$ $\Om_{0,m}$ is a half ball. Since the argument above can be carried out for a.e. $m\in(0,m_0)$, in particular there exists a sequence $m_n\to m_0$ such that $\Om_{0,m_n}$ is a half ball. Hence  $\Om_0$ is a half ball.

 \end{proof}

\section{Appendix A: some auxiliary results}\label{sc:5}
In this section we collect some auxiliary results needed in the proof of Theorem~\ref{th:ugual}.
\subsection{Density estimates}
Density estimates for $(\Lambda,r_0)$-minimizers are well known. However for the sake of completeness we give  the proof of the  proposition below showing that such density estimates are independent of the convex obstacle. 

\begin{lemma}\label{lm:perconv}
Let $\cc$ be a closed convex set with nonempty interior and $F\subset\R^N\setminus\cc$ a bounded set of finite perimeter. Then
$$
P(F;\pa\cc)\leq P(F;\R^N\setminus\cc)
$$
\end{lemma}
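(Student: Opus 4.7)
The plan is to derive the inequality by computing $P(F\cup\cc)$ and then combining with the classical monotonicity estimate $P(\cc)\leq P(F\cup\cc)$, which encodes the convexity of $\cc$. Since $F$ is bounded, by intersecting $\cc$ with a closed ball that contains $F$ in its interior one reduces to the case where $\cc$ is itself bounded; such a truncation alters neither $\pa^*F\cap\pa\cc$ nor $\pa^*F\setminus\cc$, and the truncated set is still a bounded convex set with nonempty interior. So in what follows $\cc$ may be assumed bounded.

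Next, since $|F\cap\cc|=0$, one has $\chi_{F\cup\cc}=\chi_F+\chi_\cc$ and hence $D\chi_{F\cup\cc}=D\chi_F+D\chi_\cc$ as vector-valued Radon measures. At every point $y\in\pa^*F\cap\pa^*\cc$, both sets have density $1/2$, and the essential disjointness of their interiors forces them to sit on opposite sides of their common approximate tangent hyperplane, so $\nu_F(y)=-\nu_\cc(y)$ and the two contributions cancel at $y$. Combined with $\pa^*F\cap\mathrm{int}(\cc)=\emptyset$ (because $|F\cap\cc|=0$) and with the classical fact that $\pa\cc=\pa^*\cc$ up to $\H^{N-1}$-null sets for a convex body with nonempty interior, this yields
\begin{equation*}
P(F\cup\cc)=\H^{N-1}(\pa^*F\setminus\pa^*\cc)+\H^{N-1}(\pa^*\cc\setminus\pa^*F)=P(F;\R^N\setminus\cc)+P(\cc)-P(F;\pa\cc).
\end{equation*}

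The proof then concludes by invoking the monotonicity $P(\cc)\leq P(F\cup\cc)$: substituted into the displayed identity it gives exactly $P(F;\pa\cc)\leq P(F;\R^N\setminus\cc)$. This monotonicity is the classical statement that for a bounded set $A$ of finite perimeter containing a bounded convex set $\cc$ one has $P(\cc)\leq P(A)$. I would deduce it from the $1$-Lipschitz nearest point projection $\pi\colon\R^N\to\cc$, which is the identity on $\cc$: for $\H^{N-1}$-a.e.\ $y\in\pa^*\cc$ the outward ray $t\mapsto y+t\nu_\cc(y)$ is contained in $\R^N\setminus\cc$ and, since $A$ is bounded, must exit $A$ at a point of $\pa^*A$ that projects back to $y$; hence $\pi(\pa^*A)\supset\pa^*\cc$ up to $\H^{N-1}$-null sets and the area formula for $\pi$ gives $\H^{N-1}(\pa^*\cc)\leq\H^{N-1}(\pa^*A)$. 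Alternatively one may chain $P(\cc)\leq P(\mathrm{conv}(A))\leq P(A)$, combining perimeter monotonicity for nested bounded convex sets with the classical fact that the convex hull does not increase perimeter.

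The main technical subtlety is the transversality underlying the projection argument: one must rule out that for a non-negligible set of $y\in\pa^*\cc$ the outward ray exits $A$ at a point of the singular set $\pa A\setminus\pa^*A$. This is addressed either via a Fubini-type argument on the foliation of $\R^N\setminus\cc$ by level sets of the distance function to $\cc$, or simply by appealing to the classical perimeter monotonicity for convex sets quoted above.
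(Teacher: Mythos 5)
Your core argument coincides with the paper's: both hinge on the monotonicity $P(\cc)\le P(\cc\cup F)$, which is where the convexity of $\cc$ enters, combined with the decomposition of $P(\cc\cup F)$ into the part on $\pa\cc$ and the free part $\H^{N-1}(\pa^*F\setminus\cc)$. The paper phrases the decomposition through densities, namely $P(\cc\cup F;B)=\H^{N-1}(\pa\cc\cap F^{(0)}\cap B)+\H^{N-1}(\pa^*F\setminus\cc)$, while you obtain the equivalent identity from $D\chi_{F\cup\cc}=D\chi_F+D\chi_\cc$ and the cancellation $\nu_F=-\nu_\cc$ on $\pa^*F\cap\pa^*\cc$; that derivation is correct, as is your preliminary truncation of $\cc$ to a bounded convex set (versus the paper's localization in a large ball $B$). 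The real difference is how the monotonicity is justified: the paper proves it in two lines by writing $\cc=\bigcap_i H_i$ as an intersection of half-spaces and using that cutting with a half-space does not increase the perimeter, whereas you appeal to the classical statement, which is legitimate if cited precisely. However, one of your proposed justifications is wrong: the chain $P(\cc)\le P(\mathrm{conv}(F\cup\cc))\le P(F\cup\cc)$ fails, because passing to the convex hull can strictly increase perimeter for disconnected sets (take $\cc$ a ball and $F$ a tiny ball far from it: the hull is a long capsule of much larger perimeter), and $F\cup\cc$ need not be connected. Likewise your projection sketch leaves open, as you yourself note, the possibility that the outward rays exit through $\pa(F\cup\cc)\setminus\pa^*(F\cup\cc)$. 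So the monotonicity step should rest on the half-space argument (as in the paper) or on a precise reference, and the convex-hull detour should be dropped.
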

\begin{proof}
Let $B$ a ball such that $F\subset\!\subset B$ and let $H_i$ be a sequence of closed half spaces such that $\cc=\displaystyle\bigcap_{i=1}^{\infty}H_i$. Since $\cc=(\cc\cup F)\cap\displaystyle\bigcap_{i=1}^{\infty}H_i$ we have
$$
P(\cc;B)\leq\liminf_nP\Big((\cc\cup F)\cap\bigcap_{i=1}^{n}H_i;B\Big)\leq P(\cc\cup F;B)\,,
$$
where the last inequality follows by applying repeatedly the inequality $P(G\cap H_i;B)\leq P(G;B)$ where $G$ is a set of finite perimeter.
Since $P(\cc\cup F;B)=\H^{N-1}(\pa\cc\cap F^{(0)}\cap B)+\H^{N-1}(\pa^*F\setminus\cc)$, the conclusion follows observing that $P(\cc;B)=\H^{N-1}(\pa\cc\cap F^{(0)}\cap B)+\H^{N-1}(\pa\cc\cap\pa^*F)$. \end{proof}

\begin{proposition}\label{densityestimates}
Let $\cc$ be a closed convex set with nonempty interior and
let  \(E\subset\R^N\setminus\cc\) be a restricted $(\Lambda,r_0)$-minimizer of the relative perimeter  $P(\cdot;\R^N\setminus\cc)$ according to Definition~\ref{def:lambdamin}. Then there are   positive constants \(c_1=c_1(N)\) and \(C_1=C_1(N)\) independent of  $\cc$ such that for all $r\in(0,\min\{r_0,N/(4\Lambda)\})$ we have:
\begin{itemize}
\item[(i)] for all $x\in \R^N\setminus\text{int}(\cc)$ 
$$
P(E; B_r(x))\leq C_1 r^{N-1}\;,
$$
\item[(ii)] for all \(x\in \partial^*E\)  
$$
|E\cap B_r(x)|\geq c_1 r^N\,.
$$
\end{itemize}
Moreover $E$ is equivalent to an open set $\Om$ such that $\pa\Om=\pa^e\Om$, hence $\H^{N-1}(\pa\Om\setminus\pa^*\Om)=0$, and (ii) holds at any point $x\in\pa\Om$.
\end{proposition}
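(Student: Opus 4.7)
The plan is to combine the standard comparison argument for perimeter $\Lambda$-minimizers with Lemma~\ref{lm:perconv}, which is the essential tool for keeping the constants independent of $\cc$. I begin by noting the topological fact that $E^{(1)}\cap\cc=\emptyset$: at any $y\in\partial\cc$, convexity and the nonempty interior of $\cc$ force the lower density of $\cc$ at $y$ to be strictly positive (consider a truncated cone with apex $y$ and base a small ball inside $\cc$), so the density of $E\subset\R^N\setminus\cc$ at $y$ is strictly less than~$1$. As a consequence, for a.e.\ $r>0$ the decomposition $\partial^*(E\setminus B_r(x))=(\partial^*E\setminus\overline{B_r(x)})\cup(E^{(1)}\cap\partial B_r(x))$ combined with $E^{(1)}\cap\cc=\emptyset$ yields $\partial^*(E\setminus B_r(x))\cap\cc\subset\partial^*E\cap\cc$ up to $\H^{N-1}$-null sets, so $F:=E\setminus B_r(x)$ is an admissible competitor in the restricted minimality.

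For part~(i), fix $x\in\R^N\setminus\mathrm{int}(\cc)$ and set $m(r):=|E\cap B_r(x)|$. Testing restricted minimality with $F=E\setminus B_r(x)$ yields, for a.e.\ $r$,
\beq\label{eq:propfree}
\H^{N-1}(\partial^*E\cap B_r(x)\setminus\cc)\leq \H^{N-1}(E^{(1)}\cap\partial B_r(x))+\Lambda m(r),
\eeq
bounded above by $N\omega_N r^{N-1}+\Lambda\omega_N r^N$. To estimate the remaining wet part $\H^{N-1}(\partial^*E\cap B_r(x)\cap\partial\cc)$ uniformly in $\cc$, I apply Lemma~\ref{lm:perconv} to the bounded set $E\cap B_r(x)\subset\R^N\setminus\cc$; identifying the relevant boundary pieces for a.e.\ $r$ gives
\[
\H^{N-1}(\partial^*E\cap B_r(x)\cap\partial\cc)\leq \H^{N-1}(\partial^*E\cap B_r(x)\setminus\cc)+\H^{N-1}(E^{(1)}\cap\partial B_r(x)).
\]
Summing, and using the trivial bound $\H^{N-1}(E^{(1)}\cap\partial B_r(x))\leq N\omega_N r^{N-1}$, one obtains $P(E;B_r(x))\leq C_1(N)r^{N-1}$ provided $\Lambda r\leq N/4$; the restriction to a.e.\ $r$ is then removed by left-continuity of the perimeter measure.

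For part~(ii) at $x\in\partial^*E$, I combine \eqref{eq:propfree} with a second application of Lemma~\ref{lm:perconv} to $E\cap B_r(x)$ to deduce $P(E\cap B_r(x))\leq 4 m'(r)+2\Lambda m(r)$ for a.e.\ $r$ (using $\H^{N-1}(E^{(1)}\cap\partial B_r(x))=m'(r)$ for a.e.\ $r$), and then invoke the classical Euclidean isoperimetric inequality applied to the bounded set $E\cap B_r(x)$ to arrive at
\[
N\omega_N^{1/N}m(r)^{(N-1)/N}\leq 4m'(r)+2\Lambda m(r).
\]
Using $m(r)\leq\omega_N r^N$, the term $2\Lambda m(r)$ is absorbed into the left-hand side when $r\leq N/(4\Lambda)$, leaving the differential inequality $(m(r)^{1/N})'\geq c(N)$. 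Integrating from $0$, using $m(0)=0$ and $m(r)>0$ for every $r>0$ (since $x\in\partial^*E$), gives $m(r)\geq c_1 r^N$ with $c_1=c_1(N)$. For the final open representative, $\Omega:=E^{(1)}$ together with (i)--(ii) and the analogous lower density estimate for $\R^N\setminus E$ (at points of $\partial\cc$ this is trivial from the positive density of $\cc$, and away from $\cc$ it follows from classical $\Lambda$-minimality) force $\Omega$ to be open with $\partial\Omega=\overline{\partial^*E}=\partial^e\Omega$; then Federer's theorem yields $\H^{N-1}(\partial\Omega\setminus\partial^*\Omega)=0$, and (ii) extends to every $x\in\partial\Omega$ by a limiting argument (approximate by points of $\partial^*E$ and slightly shrink $c_1$).

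The main obstacle throughout is ensuring the constants are independent of the convex obstacle $\cc$: without Lemma~\ref{lm:perconv}, any direct attempt to bound the wet part of the perimeter $\partial^*E\cap\partial\cc$ would require quantitative control of the geometry of $\partial\cc$ (e.g.\ a uniform bound on the second fundamental form), and the resulting constants would degenerate as $\cc$ is allowed to have sharp edges or singular points. A secondary technical point is the admissibility of the perturbation $F=E\setminus B_r(x)$, which hinges on the observation $E^{(1)}\cap\cc=\emptyset$ noted at the outset.
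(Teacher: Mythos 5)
Your proposal is correct and follows essentially the same route as the paper's proof: comparison with the competitor $E\setminus B_r(x)$ in the restricted minimality, Lemma~\ref{lm:perconv} to control the wet part with constants independent of $\cc$, the isoperimetric inequality turned into the differential inequality for $m(r)$ with the absorption of $2\Lambda m(r)$ for $r\le N/(4\Lambda)$, and the density considerations (positive density of $\cc$ at its boundary points, classical complement estimates away from $\cc$) to obtain the open representative with $\pa\Om=\overline{\pa^*E}$ and the extension of (ii) to $\pa\Om$. The only minor deviations are immaterial: in (i) the paper bounds the wet part simply by $\H^{N-1}(\pa(\cc\cap B_r(x)))\le\H^{N-1}(\pa B_r(x))$ (monotonicity of perimeter for nested convex bodies) rather than by a second use of Lemma~\ref{lm:perconv}, and your observation that $E^{(1)}\cap\cc=\emptyset$ just makes explicit the admissibility of $E\setminus B_r(x)$, which the paper leaves implicit.
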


\begin{proof}  Given $x\in\R^N\setminus\text{int}(\cc)$ and $r<\min\{r_0,N/(4\Lambda)\}$, we set $m(r):=|E\cap B_r(x)|$. Recall that for a.e. such $r$ we have  
$m'(r)=\H^{N-1}(E^{(1)}\cap \partial B_r(x))$ and $\H^{N-1}(\partial^*E\cap \partial B_r(x))=0$. For any such $r$ we set $F:=E\setminus B_r(x)$. Then, using Definition~\ref{def:lambdamin}, we have 
\begin{equation}\label{e:de1}
P(E; B_r(x)\setminus\cc)\leq \H^{N-1}(\partial B_r(x)\cap E^{(1)})+\Lambda|E\cap B_r(x)|
\leq   C_1 r^{N-1} 
\end{equation}
for a suitable  constant $C_1$. In turn 
$$
P(E; B_r(x))\leq P(E; B_r(x)\setminus\cc)+\H^{N-1}(\pa(\cc\cap B_r(x)))\leq C_1 r^{N-1}+\H^{N-1}(\pa B_r(x))\,,
$$
where in the last inequality we estimated the perimeter $\cc\cap B_r(x)$ with the perimeter of the larger convex set $B_r(x)$. Thus (i) follows by taking $C_1$ larger.

Observe now that  by Lemma~\ref{lm:perconv}
$$
P(E\cap B_r(x); \partial\cc)\leq P(E\cap B_r(x); \R^N\setminus\cc)\,.
$$
Thus, using also \eqref{e:de1}, we have
\begin{align*}
P(E\cap B_r(x))&=P(E\cap  B_r(x);\R^N\setminus\cc)+P(E\cap  B_r(x); \partial \cc)\\
&\leq 2 P(E\cap  B_r(x); \R^N\setminus\cc)= 2 P(E;  B_r(x)\setminus\cc)+2m'(r)\\
&\leq 4 m'(r)+2\Lambda m(r)\,.
\end{align*}
In turn, using the isoperimetric inequality and the fact that $2\Lambda r<N/2$ we get
\begin{align*}
N\omega_N^{\frac1N}m(r)^{\frac{N-1}{N}}& \leq P(E\cap B_r(x))\leq 4m'(r)+2\Lambda m(r)\\
&\leq 4m'(r)+2\Lambda r\omega_N^{\frac1N}m(r)^{\frac{N-1}N}\leq
4m'(r)+\frac{N}{2}\omega_N^{\frac1N}m(r)^{\frac{N-1}N}\,.
\end{align*}
Then  from the previous inequality we get
 $$
\frac{N}{2}\omega_N^{\frac1N}m(r)^{\frac{N-1}{N}}\leq 4m'(r)\,.
 $$
 Observe now that if in addition $x\in \partial^* E$, then $m(r)>0$ for all $r$ as above. Thus, we may divide the previous inequality by $m(r)^{\frac{N-1}{N}}$, and integrate the resulting differential inequality thus getting 
 $$
 |E\cap B_r(x)|\geq c_1r^N\,,
 $$
 for a suitable positive constant $c_1$ depending only on $N$. 
 
We show  that $\overline{\pa^*E}\subset\pa^eE$. To this aim note that (ii) holds for every $x\in\overline{\pa^*E}$. Thus, if $x\in\R^N\setminus\cc$, since both $E$ and $\R^N\setminus E$ are $\Lambda$-minimizers in a neighborhood of $x$ we have that $|E\setminus B_r(x)|\geq c_1r^N$ for $r$ small. Thus $x\not\in(E^{(0)}\cup E^{(1)})$, that is $x\in\pa^eE$. If $x\in\pa\cc\cap\overline{\pa^*E}$ then there exists a constant $c_2>0$, depending on $x$ such that for $r$ small $|\cc\cap B_r(x)|\geq c_2r^N$. This estimate, together with (ii) again implies that $x\in\pa^eE$. Hence $\H^{N-1}(\overline{\pa^*E}\setminus\pa^*E)\leq\H^{N-1}(\pa^eE\setminus\pa^*E)=0$, where the last equality follows from Theorem~16.2 in \cite{maggi-book}.

Set now $\Om=E^{(1)}\setminus\pa E^{(1)}$. Recalling that $\pa E^{(1)}=\overline{\pa^*E}$, see \eqref{Euno}, we have that $\Omega$ is an open set equivalent to $E$  such $\pa\Om=\pa E^{(1)}$. Hence the conclusion follows.
 \end{proof}

\subsection{A maximum principle} Next result is essentially the strong maximum principle proved in \cite[Lemma 2.13]{dephilippis-maggi-arma}. However, we have to apply it in a slightly different situation and therefore we indicate the  changes needed in the proof.
\begin{lemma}\label{th:maxprin}
Let $E\subset\{x_1>0\}$ be a set of locally finite perimeter such that 
\beq\label{maxprin0}
\H^{N-1}((\pa E\setminus\pa^*E)\setminus\{x_N=0\})=0
\eeq
 satisfying the following minimality property:
for every $r>0$ and  every open set with Lipschitz boundary $V\subset\!\subset Q_r(0)$ such that $\H^{N-1}(\pa E\cap\pa V)=0$  we have
\beq\label{maxprin1}
P(E; Q_r(0))\leq P(E\setminus V; Q_r(0))\,.
\eeq
Assume also that $\H^{N-1}(\pa E\cap\{x_1=0\})=0$. Then  $0\not\in\pa E$\footnote{Here as usual we assume that $\pa E=\overline{\pa^*E}$.}.
\end{lemma}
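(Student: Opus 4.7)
The plan is to argue by contradiction: assume $0\in\partial E$, and use the one-sided minimality together with the strict inclusion $E\subset\{x_1>0\}$ to force $\partial E$ to contain a set of positive $\H^{N-1}$-measure inside $\{x_1=0\}$, contradicting the last hypothesis. The first step is to apply the minimality property to the superlevel sets $V_t=\{\phi>t\}$ of a nonnegative $\phi\in C_c^\infty(Q_r(0))$ and integrate in $t\in(0,\infty)$ via Fubini and the coarea formula; this produces the global integral inequality
$$
\int_{\partial^*E}\phi\,d\H^{N-1}\leq\int_E|\nabla\phi|\,dx.
$$
Combined with $0\in\partial E=\overline{\partial^*E}$, this yields, as in the proof of Proposition~\ref{densityestimates}, uniform density estimates $c_1\rho^N\leq|E\cap B_\rho(0)|\leq \tfrac{1}{2}\omega_N\rho^N$ and $\H^{N-1}(\partial^*E\cap B_\rho(0))\leq C_1\rho^{N-1}$ for all sufficiently small $\rho>0$.

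Next I would perform a blow-up: the rescaled sets $E_k:=kE$ inherit the one-sided minimality in $Q_{kr}(0)$, and by the density estimates and standard compactness for sets of locally finite perimeter with uniform perimeter bounds, a subsequence converges in $L^1_{loc}(\R^N)$ to a set $E_\infty\subset\{x_1\geq 0\}$ with $0\in\partial E_\infty$, which is itself a one-sided minimizer at every scale and, being a blow-up limit, a cone with vertex at $0$. To identify $E_\infty$ I would test the minimality with the Lipschitz set $V=B_\rho(0)\cap\{x_1>0\}$: since $E_\infty\cap V=E_\infty\cap B_\rho$ and $E_\infty^{(1)}\cap\{x_1=0\}=\emptyset$, we obtain $\H^{N-1}(\partial^*E_\infty\cap B_\rho)\leq \H^{N-1}(E_\infty^{(1)}\cap\partial B_\rho)$; combined with the relative half-space isoperimetric inequality this produces an ODE for $m(\rho):=|E_\infty\cap B_\rho|$ which, together with a supplementary slab test $V=(0,\rho)\times B'_\epsilon$ for $\rho\ll\epsilon$ (analogous to the wedge computation that rules out narrow cones inside $\{x_1>0\}$), forces $E_\infty=\{x_1>0\}$ up to null sets. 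In particular $E_k\to\{x_1>0\}$ in $L^1_{loc}(\R^N)$.

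The final step is to upgrade this $L^1$-convergence to $C^{1,\alpha}$-convergence of the boundaries via the $\varepsilon$-regularity theory for one-sided perimeter minimizers, in the spirit of Theorem~\ref{th:cicaleo}, using the structural assumption $\H^{N-1}((\partial E\setminus\partial^*E)\setminus\{x_N=0\})=0$ to exclude singularities off $\{x_N=0\}$. One then concludes that for some small $\rho_0>0$ the set $\partial E\cap(B'_{\rho_0}\times(-\rho_0,\rho_0))$ coincides with a $C^{1,\alpha}$ graph $\{x_1=u(x')\}$ where $u\geq 0$, $u(0)=0$, and $u$ satisfies weakly the constant mean curvature equation of Theorem~\ref{th:SMP}. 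Applying Theorem~\ref{th:SMP} to the pair $(u,0)$ then forces $u\equiv 0$ on a neighborhood of the origin, which yields $\H^{N-1}(\partial E\cap\{x_1=0\})>0$ and contradicts the hypothesis. The principal technical obstacle is the $L^1$-to-$C^{1,\alpha}$ upgrade for merely one-sided minimizers, where the singular-set assumption on $\partial E\setminus\partial^*E$ enters crucially to ensure that the blow-up limit is the smooth half-space rather than some singular cone whose singular set might persist under rescaling.
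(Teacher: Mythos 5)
Your first step (the inequality $\int_{\partial^*E}\phi\,d\H^{N-1}\leq\int_E|\nabla\phi|\,dx$ and the lower volume density at $0$) is fine, but the core of the argument has genuine gaps, all stemming from the same issue: the hypothesis \eqref{maxprin1} is only a \emph{one-sided} (inward) minimality, and essentially none of the regularity machinery you invoke is available for such sets. First, blow-up limits of one-sided minimizers need not be cones: there is no monotonicity formula here, so the sentence ``being a blow-up limit, a cone with vertex at $0$'' is unjustified. Second, your identification of the limit as $\{x_1>0\}$ does not follow from the comparison you propose: testing with $V=B_\rho\cap\{x_1>0\}$ and the relative isoperimetric inequality gives, after the usual manipulation, only $m'(\rho)\geq \tfrac N2(\omega_N/2)^{1/N}m(\rho)^{(N-1)/N}$, i.e.\ $m(\rho)\geq \tfrac{\omega_N}{2}(\rho/2)^N$; the factor $2$ lost when splitting $P(E_\infty\cap B_\rho)$ prevents any rigidity conclusion, and the ``supplementary slab test'' is not actually carried out. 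Third, and most seriously, the $L^1$-to-$C^{1,\alpha}$ upgrade has no basis: $\varepsilon$-regularity (Theorem~\ref{th:cicaleo}) is a theorem about two-sided $(\Lambda,r_0)$-minimizers, and one-sided minimizers satisfy no such statement (e.g.\ the complement of a convex set is one-sided minimizing but its reduced boundary need not be $C^{1,\alpha}$). The assumption \eqref{maxprin0} cannot repair this: it only bounds the $\H^{N-1}$-measure of $\partial E\setminus\partial^*E$ off $\{x_N=0\}$ and gives no smoothness of $\partial^*E$. Finally, even granting a graph $u$, one-sided minimality yields at best a weak one-sided differential inequality for the mean curvature operator, not the equation, and not the $C^2$ regularity required to apply Theorem~\ref{th:SMP} to the pair $(u,0)$.

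This is precisely the difficulty the paper's proof is designed to circumvent: following \cite{dephilippis-maggi-arma}, it never touches the regularity of $\partial E$. One chooses $\bar r$ so that $\H^{N-2}(\partial E\cap\partial D_{\bar r})=0$ (here \eqref{maxprin0} and $\H^{N-1}(\partial E\cap\{x_1=0\})=0$ enter), defines the lower envelope $w_E(x')=\inf\{x_1:(x_1,x')\in\overline{C_{\bar r}}\cap\partial E\}$, which is positive $\H^{N-2}$-a.e.\ on $\partial D_{\bar r}$, solves the minimal surface equation with a one-parameter family of small smooth boundary data below $w_E$, and applies the strong maximum principle only to these smooth solutions $u_t$ to make their graphs disjoint, so that some graph $\Gamma_{\bar t}$ satisfies $\H^{N-1}(\Gamma_{\bar t}\cap\partial E)=0$. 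The one-sided minimality \eqref{maxprin1} is then used exactly once, in the comparison of Lemma~\ref{lm:DPCM} with $V=\{x_1<u_{\bar t}(x')\}$, to slide the minimal graph below $E$ and conclude $w_E(0)\geq u_{\bar t}(0)>0$, i.e.\ $0\notin\partial E$. If you want to salvage your approach you would have to develop a regularity/classification theory for one-sided minimizers that does not exist in the paper and is not standard; the sliding-graph comparison is the mechanism that replaces it.
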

The proof of lemma above is in turn based on the following variant of \cite[Lemma 2.12]{dephilippis-maggi-arma}. To this aim, given $r>0$ we set $C_r:=(0,r)\times D_r$, where $D_r:=\{x'\in\R^{N-1}:\, |x'|<r\}$.
\begin{lemma}\label{lm:DPCM}
Let $E$ be as in Lemma~\ref{th:maxprin}, let $\bar r>0$ and let $u_0\in C^2(D_{\bar r})\cap\text{\rm Lip}(D_{\bar r})$ with $0<u_0<{\bar r}$ on $\overline D_{\bar r}$. Assume also that
$$
E^{(1)}\cap[(0,{\bar r})\times\pa D_{\bar r}]\subset\{(x_1,x')\in(0,{\bar r})\times\pa D_{\bar r}:\,x_1\geq u_0(x')\}\,,
$$
$$
\Div\bigg(\frac{\nabla u_0}{\sqrt{1+|\nabla u_0|^2}}\bigg)=0\qquad\text{in $D_{\bar r}$}
$$
and
\beq\label{DPCM1}
\H^{N-1}\big(\pa E\cap\pa\{(x_1,x')\in C_{\bar r}:\,x_1< u_0(x')\}\big)=0\,.
\eeq
Then, $$
E^{(1)}\cap C_{\bar r}\subset\{(x_1,x')\in C_{\bar r}:\,x_1\geq u_0(x')\}\,.
$$
\end{lemma}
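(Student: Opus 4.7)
The plan is to argue by contradiction. Suppose $|E^{(1)} \cap F| > 0$, where $F := \{(x_1,x') \in C_{\bar r} : x_1 < u_0(x')\}$, and take $V = F$ as a competitor in the one-sided minimality hypothesis inherited from Lemma~\ref{th:maxprin}. The set $F$ is open with Lipschitz boundary, $\overline F \subset [0,\bar r] \times \overline{D_{\bar r}} \subset\!\subset Q_r$ for every $r > \bar r$, and the admissibility condition $\H^{N-1}(\partial E \cap \partial F)=0$ is exactly \eqref{DPCM1}. Writing $\partial^*(E\setminus F)$ in the standard way yields
\[
\H^{N-1}(\partial^* E \cap F) \;\leq\; \H^{N-1}(\partial F \cap E^{(1)}).
\]
The boundary $\partial F$ splits into the graph of $u_0$, the bottom disk $\{x_1 = 0\} \cap \overline{D_{\bar r}}$ and the lateral face $\{0 \leq x_1 \leq u_0(x')\}\times\partial D_{\bar r}$. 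Since $E \subset \{x_1 > 0\}$ the bottom contributes nothing to $E^{(1)}$, and the lateral face lies in $\{x_1 < u_0\}$, so the hypothesis $E^{(1)} \cap [(0,\bar r)\times\partial D_{\bar r}] \subset \{x_1 \geq u_0\}$ also kills the lateral contribution. Therefore
\[
\H^{N-1}(\partial^* E \cap F) \;\leq\; \H^{N-1}(\mathrm{graph}(u_0) \cap E^{(1)}).
\]

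The reverse inequality comes from a calibration. Set
\[
X(x_1,x') \;:=\; \frac{\bigl(-1,\nabla u_0(x')\bigr)}{\sqrt{1+|\nabla u_0(x')|^2}}.
\]
The minimal surface equation for $u_0$ gives $\mathrm{div}(X) \equiv 0$ in a neighborhood of $\overline F$, and one checks directly that $|X| \equiv 1$ and $X \cdot \nu_F \equiv -1$ on the graph of $u_0$. Applying the Gauss--Green formula to $X$ on the set of finite perimeter $E \cap F$, discarding the interface $\partial^* E \cap \partial F$ thanks to \eqref{DPCM1}, and using the same vanishing on the bottom and lateral faces of $\partial F$, one obtains
\[
\H^{N-1}(\mathrm{graph}(u_0) \cap E^{(1)}) \;=\; \int_{\partial^* E \cap F} X \cdot \nu_E \, d\H^{N-1} \;\leq\; \H^{N-1}(\partial^* E \cap F).
\]
Combining with the previous inequality, every estimate is an equality, and since $|X|=1$ this forces $\nu_E = X$ $\H^{N-1}$-a.e.\ on $\partial^* E \cap F$.

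It remains to extract a contradiction from this rigidity. By Theorem~\ref{rm:regola}, $\partial^* E \cap F$ is a $C^{1,\alpha}$ hypersurface, and $\nu_E = X$ asserts that its outer normal agrees everywhere with the unit normal to the foliation $\{x_1 - u_0(x') = c\}_{c\in\R}$. Consequently each connected component of $\partial^* E \cap F$ is a relatively open piece of a vertical translate of the graph of $u_0$, and $E \cap F$ decomposes into slabs of the form $\{\max(0,u_0 - s_1) < x_1 < u_0 - s_2\} \cap F$ with $0 \leq s_2 < s_1$. The maximum principle for the equation $\mathrm{div}(\nabla u_0/\sqrt{1+|\nabla u_0|^2})=0$ forces $u_0$ to attain its extrema on $\partial D_{\bar r}$, so any non-trivial slab necessarily reaches the lateral face $(0,\bar r)\times\partial D_{\bar r}$; there the slab would produce density-$1$ points of $E$ with $x_1 < u_0$, contradicting the lateral hypothesis. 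The remaining alternative $|F \setminus E|=0$ is ruled out by \eqref{DPCM1}, since then $\partial E$ would coincide with the lateral face of $F$ up to $\H^{N-1}$-null sets, a set of positive $\H^{N-1}$-measure. This proves $|E^{(1)} \cap F|=0$, which is the conclusion.

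I expect the main obstacle to be this last step: rigorously turning the pointwise identity $\nu_E = X$ into the global slab description of $E \cap F$ (via the $C^{1,\alpha}$ regularity provided by Theorem~\ref{rm:regola}) and then verifying that each allowed configuration of slabs is inconsistent with either the lateral hypothesis on $E^{(1)}$ or the transversality condition \eqref{DPCM1}.
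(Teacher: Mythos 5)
Your overall strategy is the same as the paper's: the paper's proof of this lemma simply defers to Lemma~2.12 of \cite{dephilippis-maggi-arma}, i.e.\ comparison with the competitor $E\setminus V$, $V=\{x_1<u_0(x')\}\cap C_{\bar r}$ (admissible in \eqref{maxprin1} precisely because of \eqref{DPCM1}), combined with the divergence theorem for the calibration $X$ built from the minimal graph. Up to and including the equality chain and the rigidity conclusion $\nu_E=X$ $\H^{N-1}$-a.e.\ on $\pa^*E\cap V$, your computation is correct (modulo the routine remark that $X$ is only $C^1$ in the open cylinder, since $u_0\in C^2(D_{\bar r})$ only, so Gauss--Green should be applied on $C_\rho$, $\rho<\bar r$, and then $\rho\to\bar r$).

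The endgame, however, contains genuine gaps, which you partly flag yourself. First, you invoke Theorem~\ref{rm:regola} to get $C^{1,\alpha}$ regularity of $\pa^*E\cap F$, but $E$ is only assumed to satisfy the one-sided minimality \eqref{maxprin1} (comparison with sets $E\setminus V$), not the two-sided $(\Lambda,r_0)$-minimality of Definition~\ref{def:lambdaminclassic}; no $\e$-regularity theory applies, so the passage from $\nu_E=X$ a.e.\ to the slab description must be done measure-theoretically (for instance by slicing: since $\nu_E\cdot e_1<0$ uniformly on $\pa^*E\cap V$, a.e.\ vertical section of $E$ inside $V$ is a single interval reaching up to the graph, and rigidity plus a Fubini argument then excludes ``vertical walls''). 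Second, and more seriously, the asserted contradiction ``a slab reaching the lateral face produces density-$1$ points of $E$ with $x_1<u_0$'' does not follow as stated: the slab lies inside the cylinder and $E$ is completely unconstrained outside it, so such lateral points a priori only have density $\geq 1/2$, which does not contradict the hypothesis on $E^{(1)}$. To force them into $E^{(1)}$ one must use Federer's decomposition of $\pa^*(E\cap V)$: a.e.\ point of the lateral wall of the slab belongs to $(F^{(1)}\cap\pa^*E)\cup(E^{(1)}\cap\pa^*V)\cup\{\nu_E=\nu_V\}$ (with $F^{(1)}$ replaced by $V^{(1)}$), the first set misses $\pa V$, the third is $\H^{N-1}$-null by \eqref{DPCM1}, and only then does the lateral hypothesis give the contradiction. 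The same trichotomy, not \eqref{DPCM1} alone, is what rules out $|V\setminus E|=0$: your claim that in that case $\pa E$ would coincide with the lateral face is false if $E$ extends beyond the cylinder; rather, a.e.\ lateral point below the graph would have to lie either in $E^{(1)}$ (excluded by the lateral hypothesis) or in $\pa E\cap\pa V$ (excluded by \eqref{DPCM1}). So the skeleton matches the paper's (De Philippis--Maggi's) argument, but the final step needs to be reworked along these lines before the proof is complete.
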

\begin{proof}
The proof goes exactly as the one of Lemma 2.12 in \cite{dephilippis-maggi-arma} as it is based on the comparison with he competitor $F=E\setminus V$, where $V=\{(x_1,x')\in C_{\bar r}:\,x_1< u_0(x')\}$. Observe that assumption \eqref{DPCM1} guarantees that such a competitor satisfies  $\H^{N-1}(\pa E\cap \pa V)=0$, which is required  in order  \eqref{maxprin1} to hold.
\end{proof}
\begin{proof}[Proof of Lemma~\ref{th:maxprin}]
For reader's convenience we reproduce the proof of Lemma 2.13 in \cite{dephilippis-maggi-arma} with the small changes needed in our case.

We choose ${\bar r}>0$ so that $\H^{N-1}(\pa E\cap\pa C_{\bar r})=0$  and $\H^{N-2}(\pa E\cap\pa D_{\bar r})=0$, where with a slight abuse of notation $\pa D_{\bar r}$ stands for the relative boundary of $D_{\bar r}$ in $\{x_1=0\}$.  Note that a.e. $r>0$ satisfies these conditions thanks to  \eqref{maxprin0} and to the assumption  $\H^{N-1}(\pa E\cap\{x_1=0\})=0$. Define now a function $w_E : \overline{D_{\bar r}}\to[0,\infty]$ by setting
$$
w_E(x')=\inf\{x_1\in\R:\, (x_1,x')\in \overline{C_{\bar r}}\cap\pa E\}.
$$
Observe that $w_E$ is nonnegative and lower semicontinuous on $\overline{D_{\bar r}}$, with the property that
$$
E^{(1)}\cap C_{\bar r}\subset\{(x_1,x'):\,x'\in D_{\bar r},\,x_1\geq w_E(x')\}\,.
$$

Recalling that  $\H^{N-2}(\pa E\cap\pa D_{\bar r})=0$, we have that $w_E>0$ $\H^{N-2}$-a.e. on $\pa D_{\bar r}$.
Therefore there exists a family  $(\varphi_t)_{t\in(0,1)}\subset C^\infty(\pa D_{\bar r})$ such that 
$$
 0\leq\varphi_{t_1}\leq\varphi_{t_2}\leq \min\Big\{w_E,\frac{\bar r}{2}\Big\}\quad \varphi_{t_1}\not\equiv\varphi_{t_2}\quad\text{for all $0<t_1<t_2<1$}\,.
$$
By Lemma~2.11 in \cite{dephilippis-maggi-arma} for every $t\in(0,1)$ there exists $u_t\in C^{\infty}(D_{\bar r})\cap{\rm Lip}(D_{\bar r})$ such that
$$
\begin{cases}
\Div\bigg(\displaystyle\frac{\nabla u_t}{\sqrt{1+|\nabla u_t|^2}}\bigg)=0& \text{in $D_{\bar r}$,} \\
u_t=\varphi_t & \text{on $\pa D_{\bar r}$\,.}
\end{cases}
$$
Note that by the Strong Maximum Principle Theorem~\ref{th:SMP} we have that  $0<u_{t_1}<u_{t_2}<{\bar r}/2$ in $D_{\bar r}$ for every $0<t_1<t_2<1$. Therefore  the graphs $\Gamma_t$ of $u_t$ are mutually disjoint in $C_{\bar r}$ and so $\H^{N-1}(\Gamma_t\cap\pa E)=0$ for all but countably many $t\in(0,1)$. In particular there exists $\bar t$ such that \eqref{DPCM1} holds with $u_0$ replaced by $u_{\bar t}$. Therefore we may apply Lemma~\ref{lm:DPCM} to conclude that $E^{(1)}\cap C_{\bar r}\subset\{(x_1,x')\in C_{\bar r}:\,x_1\geq u_{\bar t}(x')\}$ so that in particular $w_E(0)\geq u_{\bar t}(0)>0$, hence $0\not\in\pa E$.
\end{proof}

\subsection{A regularity result}
The following proposition is a slight variant of a result contained in \cite{SZi}.
\begin{proposition}\label{strezi}
Let $\Omega\subset\R^N$ be a bounded open set  and let $x_0\in\pa\Om$ be such that $\pa\Om$ is of class $C^2$ in a neighborhood $U$ of $x_0$. Let $E\subset\Om$ satisfy
\beq\label{strezi1}
P(E)\leq P(F)\quad\text{for all $F\subset\Omega$, $|F|=|E|$,  s.t. $E\Delta F\subset\!\subset U$\,.}
\eeq
If there exists a support hyperplane $\Pi$ to $E$ at $x_0$ such that $\pa E\cap\Pi=\{x_0\}$, then $\pa E$ is of class $C^{1,1}$ in a neighborhood $V$ of $x_0$. Moreover if $\pa^*E\cap\Om\cap V\not=\emptyset$, then for $\H^{N-1}$-a.e. $x\in\pa E\cap\pa\Om\cap V$
\beq\label{strezi2}
H_{\pa\Om}(x)\leq H\,,
\eeq
where $H$ denotes the constant curvature of $\pa^*E\cap\Om\cap V$.
\end{proposition}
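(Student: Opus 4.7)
The plan is to reduce the problem to a scalar obstacle problem for the minimal surface operator and apply classical obstacle-problem regularity. A standard Lagrange multiplier argument first turns \eqref{strezi1} into penalized minimality: $E$ is a perimeter $(\Lambda, r_0)$-minimizer in $U$ subject to the single constraint $F\subset\Om$, for some $\Lambda>0$. Set $x_0=0$. If $x_0\in\Om$, the constraint is inactive in a small ball around $x_0$, so Theorem~\ref{rm:regola}(iii) (with the support hyperplane $\Pi$ playing the role of the touching $C^1$ surface) yields $x_0\in\pa^*E$; bootstrap on the constant-mean-curvature equation then gives analytic, hence $C^{1,1}$, regularity, and the moreover statement is vacuous. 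The remainder addresses the boundary case $x_0\in\pa\Om$.

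\textbf{Graph representation.} A preliminary geometric argument, using $\pa\Om\in C^2$, $E\subset\Om$, and the isolated-contact condition $\pa E\cap\Pi=\{x_0\}$, shows that $\Pi$ must coincide with $T_{x_0}\pa\Om$: any other configuration would either fail to support $E$ or would trap $E$ in a wedge whose flat face on $\Pi$ has positive $\H^{N-1}$-measure, contradicting (via the density estimates of Proposition~\ref{densityestimates} and minimal-cone rigidity applied to the blow-up) the isolated-contact hypothesis. After rotating, take $\Pi=\{x_N=0\}$, $\pa\Om\cap U=\{x_N=\phi(x')\}$ with $\phi\in C^2(D_r)$, $\phi(0)=0=|\nabla\phi(0)|$, and $\Om$ locally on the appropriate side. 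A blow-up of $E$ at $x_0$ produces a minimal cone contained in a half-space, hence a hyperplane; so $x_0\in\pa^*E$, and (shrinking $V$) $\pa E\cap V$ is the graph $\{x_N=u(x')\}$ of some $u\in C^{1,\alpha}(\overline{D_r})$. The inclusion $E\subset\Om$ then becomes the obstacle relation between $u$ and $\phi$.

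\textbf{Obstacle-problem regularity.} The graph $u$ satisfies the complementarity system for the minimal surface operator $\mathcal L v:=-\mathrm{div}(\nabla v/\sqrt{1+|\nabla v|^2})$ associated to the constrained minimization: on the free set $\{u\neq\phi\}$, $\mathcal L u$ equals the constant mean curvature $H$ of the free part $\pa^*E\cap\Om$; on the contact set, the natural obstacle-type one-sided inequality holds. Since $\phi\in C^2$, this is exactly the setting of \cite{SZi}, whose main theorem yields $u\in C^{1,1}_{\mathrm{loc}}(D_r)$, giving $\pa E\in C^{1,1}$ in a neighborhood of $x_0$.

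\textbf{Curvature inequality and main obstacle.} For $\H^{N-1}$-a.e.\ $x\in\pa E\cap\pa\Om\cap V$, the $C^{1,1}$ graph satisfies $u(x)=\phi(x)$ and $\nabla u(x)=\nabla\phi(x)$, and at Lebesgue points of $D^2u$ the Hessian of the touching function $u-\phi$ has definite sign because $u-\phi$ attains a one-sided extremum at $x$. The monotonicity of $\mathcal L$ in the Hessian — its principal part is $-A_{ij}(\nabla v)\partial_{ij}v$ with $A$ positive definite — together with the free-part identity $\mathcal L u=H$, then yields the desired bound $H_{\pa\Om}(x)\leq H$ in the paper's sign convention for mean curvature. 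The main technical hurdle throughout this plan is the $C^{1,1}$-regularity step, which is precisely what the Sternberg–Zumbrun theorem \cite{SZi} provides for the quasilinear minimal-surface obstacle problem with $C^2$ obstacle; once this is in hand, the remaining steps are essentially formal consequences of the graph representation and the ellipticity of $\mathcal L$.
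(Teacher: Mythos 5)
Your overall route is the paper's: reduce to a graph obstacle problem over the support hyperplane with the $C^2$ hypersurface $\pa\Om$ as obstacle, obtain $C^{1,1}$ regularity from the argument of \cite{SZi} (which, incidentally, is Stredulinsky--Ziemer, not Sternberg--Zumbrun), and read off \eqref{strezi2} as a byproduct of the obstacle-problem inequality (this is exactly (3.5) in \cite{SZi}, which the paper cites). The gap is in how you license the regularity tools before the graph representation. You remove only the volume constraint and keep the containment constraint $F\subset\Om$, and then invoke Theorem~\ref{rm:regola}(iii), the rigidity ``a minimal cone contained in a half-space is a hyperplane'' for the blow-up at $x_0$, and the density estimates of Proposition~\ref{densityestimates}. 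None of these applies as stated in your setting: Theorem~\ref{rm:regola} and the minimal-cone rigidity concern \emph{unconstrained} $(\Lambda,r_0)$-minimizers, while at $x_0\in\pa\Om$ your set is a priori only a minimizer among subsets of $\Om$, so its blow-up is a priori a cone minimizing the full perimeter subject to containment in a half-space (wedges and other constrained configurations must be excluded by an argument, not by quoting interior rigidity); and Proposition~\ref{densityestimates} is about restricted minimizers of the \emph{relative} perimeter outside a \emph{convex} obstacle, which is a different functional and a different geometry from your $C^2$, non-convex $\pa\Om$ with full perimeter.

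The missing step is precisely the paper's first move: using $\pa\Om\in C^2$, one shows by a standard cut-and-compare argument (replace a competitor $F$ by $F\cap\Om$ and estimate the extra wetted area by $C|F\setminus\Om|$, e.g.\ via the divergence theorem applied to an extension of the normal field of $\pa\Om$, then restore the volume at cost $\Lambda\,||F\cap\Om|-|E||$) that $E$ is an honest \emph{unconstrained} $(\Lambda,r_0)$-minimizer in a smaller neighborhood of $x_0$. Only after this do the support plane $\Pi$ and Theorem~\ref{rm:regola}(iii) give $x_0\in\pa^*E$ and a $C^{1,\alpha}$ graph structure, and then the identification $\Pi=T_{x_0}\pa E=T_{x_0}\pa\Om$ is immediate from $E\subset\Om$ by comparing the blow-up half-spaces. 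Your preliminary ``wedge'' argument for $\Pi=T_{x_0}\pa\Om$ does not work as stated: being trapped in a wedge does not by itself force $\pa E\cap\Pi$ to have positive $\H^{N-1}$-measure, so it does not contradict the isolated-contact hypothesis; the clean argument goes through the regularity of $x_0$ first. Finally, in the last paragraph the inequality \eqref{strezi2} should be attributed to the one-sided variational inequality of the obstacle problem (a.e.\ on the contact set the Hessians of $u$ and of the obstacle coincide, so the pointwise comparison of Hessians alone gives no information); your own sketch mentions this inequality, but the concluding reasoning as written conflates the two.
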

\begin{proof}
%
Observe that by a standard argument \eqref{strezi1}, together with the assumption that $\pa\Om$ of class $C^2$, implies that $E$ is a $(\Lambda,r_0)$-minimizer in a possibly smaller naeighborhood $U'$ of $x_0$. Hence, since there exists a support hyperplane to $\pa E$ at $x_0$,   by Theorem~\ref{rm:regola} $\pa E$ is of class $C^{1,\alpha}$ in a neighborhood of $x_0$. Moreover,  up to a change of coordinate system, we may assume that the support hyperplane to $E$ at $x_0$ is the horizontal hyperplane $\{x_N=0\}$ and $E\subset\{x_N>0\}$. Since $\{x_N=0\}\cap\pa E=\{x_0\}$, there exists $\e>0$ sufficiently small such that $E\cap\{x_N=\e\}$ is an $(N-1)$-dimensional relatively open set, denoted by $\omega$, and there exist  $\beta\in C^2(\omega)$, $u\in C^{1,\alpha}(\omega)$ whose graphs coincide with $\pa\Om\cap(\omega\times(-r,r))$ and $\pa E\cap(\omega\times(-r,r))$ respectively, for some $r>0$, with $u=0$ on $\pa\omega$ and $\beta\leq u\leq0$. The $C^{1,1}$ regularity of $\pa E$ then follows arguing exactly as in the proof at page 658 of \cite{SZi}. Finally, inequality \eqref{strezi2} is also a byproduct of the same proof, see $(3.5)$ in  \cite{SZi}.
\end{proof}

\section{Appendix B: some steps of the proof of Theorem~\ref{th:ugual}}\label{sc:6}

\subsection{Proof of the claim of Step 1.}

We argue by contradiction assuming that there exist a sequence $\Lambda_h\to+\infty$, $\eta_h\in[0,\tilde\eta]$, $\eta_h\to\eta_0$,  $m_h\in[m',m'']$ converging to some $m$, and a sequence $E_h\subset\Om_0\setminus\cc_{\eta_h}$ such that each $E_h$ is a minimizer of \eqref{ugual1} with $\Lambda', m$ and $\eta$ replaced by $\Lambda_h, m_h$ and $\eta_h$ respectively, and $|E_h|\not=m_h$. {Since $P(E_h;\R^N\setminus\cc_{\eta_h})\leq P(\Om_{\eta_h,m_h};\R^N\setminus\cc_{\eta_h})$, from \eqref{supsup} we have that the perimeters of  $E_h$ are equibounded perimeters. Therefore,} without loss of generality we may assume that $E_h$ converges in $L^1$ to some set $F\subset\Om_0\setminus\cc_{\eta_0}$ such that $|F|=m$. We assume also that $|E_h|<m_h$ for all $h$, the other case being analogous. Note also that, since $\Lambda_h\to+\infty$ we have $m_h-|E_h|\to0$.

Observe now that  \eqref{lunga} implies that  there exists a point $x\in\partial^*F\cap(\Om_0\setminus\cc_{\eta_0})$. 
Arguing as in Step~1 of Theorem~1.1 in  \cite{EF}, given $\e>0$ sufficiently small, we can find nearby $x$ a point $x'$ and  $r>0$ such that $B_r(x')\subset\Om_0\setminus\cc_{\eta_0}$ and 
  $$
 | F\cap B_{r/2}(x')|<\e r^N\,, \quad |  F \cap B_r(x')|>\frac{\omega_N}{2^{N+2}}r^N\,. 
 $$
 Therefore, for $h$ sufficiently large, we also have
 $$
 | E_h\cap B_{r/2}(x')|<\e r^N\,, \quad | E_h\cap B_r(x')|>\frac{\omega_N}{2^{N+2}}r^N\,.
  $$
 We can now continue as in the proof of \cite[Theorem~1]{EF}. We recall the main construction for the reader's convenience.   For a sequence $0<\sigma_h<1/2^N$ to be chosen, we introduce the following bilipschitz maps:
  $$
  \Phi_x(x):=
 \begin{cases}
 x'+(1-\sigma_h(2^N-1))(x-x') & \text{if $|x-x'|\leq \frac r2$,} \vspace{5pt}\\
 x+\sigma_h\Bigl(1-\displaystyle\frac{r^N}{|x-x'|^N}\Bigr)(x-x') & \text{$\frac r2\leq |x-x'|<r$,}\vspace{5pt}\\
 x & \text{$|x-x'|\geq r$.}
 \end{cases}
 $$
 Setting $\widetilde E_h:=\Phi_h(E_h)$, arguing as for the proof of \cite[formula~(14)]{EF}, we have 
 \beq\label{EFper}
 \H^{N-1}\big(\pa^*E_h\setminus\cc_{\eta_0}\big)- \H^{N-1}(\pa^*\widetilde E_h\setminus\cc_{\eta_0})\geq -2^NN\sigma_h \H^{N-1}(\pa^*E_h\setminus\cc_{\eta_0})\,.
 \eeq
 Moreover, arguing exactly as in Step 4  of the proof of  \cite[Theorem~1]{EF} we have
$$
 |\widetilde E_h|-|E_h|\geq \sigma_hr^N(c-\e C)
$$
 for  suitable universal constants $c, C>0$.
If we fix $\e$ so that the negative term  in the brackets does not exceed half  the  positive one, then we have 
  \beq\label{EFvol}
   |\widetilde E_h|-|E_h|\geq \frac{c}2\sigma_hr^N\,.
  \eeq
  In particular from this inequality it is clear that we can choose $\sigma_h$ so that $|\widetilde E_h|=m_h$; this implies  $\sigma_h\to 0$. With this choice of $\sigma_h$, it follows from \eqref{EFper} and \eqref{EFvol} that 
  \begin{align*}
  P(\widetilde E_h;\R^N\setminus\cc_{\eta_h})+\Lambda_h||\widetilde E_h|-m_h|&\leq 
  P( E_h;\R^N\setminus\cc_{\eta_h})+\Lambda_h|| E_h|-m_h| \\
  & \qquad+2^NN\sigma_h \H^{N-1}(\pa^*E_h\setminus\cc_{\eta_h})-\Lambda_h \frac{c}2\sigma_hr^N\\
  & <P( E_h;\R^N\setminus\cc_{\eta_h})+\Lambda_h|| E_h|-m_h| 
\end{align*}
  for $h$ large, thus contradicting the minimality of $E_h$. 

\subsection{Proof of the claim of Step 3.} We start by showing that the functions $I_\eta$ are strictly increasing in $[0,m_\eta]$ for all $\eta\in[0,\bar\eta]$. To this end we fix $m\in(0,m_\eta]$ 
 and a point $x\in\pi_{\cc_\eta}(\Om_{\eta,m})$, where $\pi_{\cc_\eta}$ is the orthogonal projection on $\cc_\eta$. Let $\Pi$ be the tangent hyperplane to $\cc_\eta$ at $x$. Define $\Pi_t=\Pi+t\nu_{\cc_\eta}(x)$ for $t\in\R$ and set
$$
\bar t=\max\{t\geq0:\,\Pi_t\cap \overline{\Om_{\eta,m}}\not=\emptyset\}.
$$
Note that $\bar t>0$ and that  $\Pi_{\bar t}$ is a support hyperplane for $\Om_{\eta,m}$ with dist$(\Pi_{\bar t},\cc_\eta)=\bar t$. 
%
For all $t\in(0,\bar t)$ we denote by $\Om_{\eta,m,t}$ the intersection of $\Om_{\eta,m}$ with the half space with boundary $\Pi_t$ containing $\cc_\eta$. Then $I_{\eta}(|\Om_{\eta,m,t}|)\leq P(\Om_{\eta,m,t};\R^N\setminus\cc_\eta)<P(\Om_{\eta,m};\R^N\setminus\cc_\eta)=I_{\eta}(m)$. Since the function $t\to|\Om_{\eta,m,t}|$ is   increasing and continuous  in a left neighborhood of $\bar t$ and $|\Om_{\eta,m,t}|<|\Om_{\eta,m}|$ if $t<\bar t$,  it follows  that 
\beq\label{isoprof2}
\text{for every $m\in(0,m_\eta]$ there exists $\e>0$ s.t. $I_{\eta}(s)<I_{\eta}(m)$ for all $s\in(m-\e,m)$}\,.
\eeq
 Let $I=\{0<s<m:\,I_{\eta}(\sigma)\leq I_{\eta}(m)$  for all $\sigma\in[s,m)\}$. We claim that $I=(0,m)$. Indeed if $\bar m=\inf I>0$, then there exist $m_n\in I$, with $m_n\to\bar m^+$. Since the minimizers $\Om_{\eta, m_n}$ are equibounded sets with equibounded perimeters, see \eqref{supsup}, up to a subsequence we may assume that $\Om_{\eta, m_n}$ converge to a set $E\subset \Om_0\setminus\cc_\eta$ with $|E|=\bar m$. Then, by the lower semicontinuity of the perimeter we conclude that $I_{\eta}(\bar m)\leq P(E;\R^N\setminus\cc_\eta)\leq \liminf_{n}I_{\eta}(m_n)\leq I_\eta(m)$. In turn, \eqref{isoprof2} implies that there exists exists a left neighborhood $(\bar m-\e,\bar m)$ such that $I_{\eta}(s)<I_{\eta}(\bar m)\leq I_\eta(m)$ for all $s\in(\bar m-\e,\bar m)$ which is a contradiction to the fact that $\bar m=\inf I$. This contradiction proves that $I_{\eta}$ is increasing. The strict monotonicity now follows from \eqref{isoprof2}.
   
 Finally if, $m_1,m_2\in[m',m'']$, from \eqref{ugual1} we have for $\eta\in[0,\tilde\eta]$
 $$
 I_\eta{m_2}=P(\Om_{\eta,m_2};\R^N\setminus\cc_\eta)\leq P(\Om_{\eta,m_1};\R^N\setminus\cc_\eta)+\Lambda'|m_2-m_1|\,.
 $$
 This proves the $\Lambda'$-Lipschitz continuity of $I_\eta$.

\subsection{Proof of claim \eqref{calim4}}
Let us start by assuming also that
\beq\label{calim1}
\H^{N-1}(\pa E_h\cap\pa V\cap\mathscr H)=0 \qquad\text{for all $h\in\mathbb N$\,.}
\eeq
To this aim we fix $\delta>0$ and set $\mathscr H_\delta:=\{x\in\mathscr H:\, x_N>\delta\}$ and $(E)_\delta=E+B_\delta(0)$. Then we denote by $\Phi_h:\overline{Q_r(0)\cap\mathscr H}\to\overline{Q_r(0)\setminus\cc_{\eta,h}}$ a sequence of $C^1$ diffeomorphisms converging in $C^1$ to the identity map as $h\to0$ with the property that $\Phi_h(\pa\mathscr H\cap Q_r(0))=\pa\cc_{\eta,h}\cap Q_r(0)$ and $\Phi_h(x)=x$ if $x\in\mathscr H_\delta$. 
 Recalling the $\Lambda$-minimality property \eqref{lambdaetah},   we have using \eqref{calim1} and observing that $\Phi_h(V)\subset\!\subset Q_r(0)$ for $h$ sufficiently large
\begin{align*}
&P(E_h; Q_r(0)\setminus \cc_{\eta,h})\leq P(E_h\setminus\Phi_h(V); Q_r(0)\setminus \cc_{\eta,h})+\frac{\Lambda}{h}|\Phi_h(V)|\\
&\qquad\leq P(E_h; (Q_r(0)\setminus\cc_{\eta,h})\setminus\overline{\Phi_h(V)}) +
 P(\Phi_h(V); (Q_r(0)\setminus \cc_{\eta,h})\cap E_h)\\
 &\qquad\qquad+\H^{N-1}(\pa\Phi_h(V)\cap\pa E_h\cap\{x_N\leq\delta\}\cap (Q_r(0)\setminus\cc_{\eta,h}))+\frac{\Lambda}{h}|\Phi_h(V)|\,.
\end{align*}
Since
$$
P(E_h; Q_r(0)\setminus \cc_{\eta,h})\geq P(E_h; (Q_r(0)\setminus\cc_{\eta,h})\setminus\overline{\Phi_h(V)})+P(E_h; (Q_r(0)\setminus\cc_{\eta,h})\cap\Phi_h(V))\,,
$$
and using the fact that $\mathscr H_\delta\cap V=\mathscr H_\delta\cap\Phi_h(V)\subset (Q_r(0)\setminus\cc_{\eta,h})\cap\Phi_h(V)$,
 the inequality above yields
\begin{align}\label{calim5}
&P(E_h;\mathscr H_\delta\cap V)\leq P(\Phi_h(V); (Q_r(0)\setminus \cc_{\eta,h})\cap E_h)\nonumber \\
&\qquad\qquad+\H^{N-1}(\pa\Phi_h(V)\cap\{x_N\leq\delta\}\cap (Q_r(0)\setminus\cc_{\eta,h}))+\frac{\Lambda}{h}|\Phi_h(V)| \nonumber\\
&\qquad\leq P(V; Q_r(0)\cap\mathscr H_\delta\cap E_h)\nonumber\\
&\qquad\qquad+2\H^{N-1}(\pa\Phi_h(V)\cap\{x_N\leq\delta\}\cap (Q_r(0)\setminus\cc_{\eta,h}))+\frac{\Lambda}{h}|\Phi_h(V)| \\
&\qquad\leq P(V; Q_r(0)\cap\mathscr H_\delta\cap (E)_\delta)\nonumber\\
&\qquad+2(\text{Lip}(\Phi_h))^{N-1}P(V;\{0<x_N\leq\delta\})+\frac{\Lambda}{h}|\Phi_h(V)|\,,\nonumber
\end{align}
where  in the last inequality we used the fact that $\Phi_h^{-1}((Q_r(0)\setminus \cc_{\eta,h})\cap\{x_N\leq\delta\})= Q_r(0)\cap\{0<x_N\leq\delta\}$ and the fact that $\overline{E_h}$ converge in the Kuratowski sense to $\overline{E}$ in $\mathscr H_\delta$, see Remark~\ref{rm:compact}.
By the lower semicontinuity of the perimeter,  passing to the limit in  \eqref{calim5}
$$
P(E;\mathscr H_\delta\cap V)\leq P(V; Q_r(0)\cap\mathscr H_\delta\cap (E)_\delta)+2P(V; \{0<x_N\leq\delta\})\,.
$$
In turn, by letting $\delta\to0$ we have
\beq\label{calim6}
P(E;\mathscr H\cap V)\leq P(V; Q_r(0)\cap E)\,,
\eeq
 which is equivalent to \eqref{calim4} thanks to  first condition in \eqref{calim4}. 
 To remove \eqref{calim1} it is enough to consider a sequence of smooth sets $V_j\subset\!\subset Q_r(0)$, $V\subset\!\subset V_j$, satisfying the first condition in \eqref{calim4} and \eqref{calim1}, and such that $V_j\to V$ in $L^1$ and $P(V_j;Q_r(0))\to P(V;Q_r(0))$. The conclusion then follows by applying \eqref{calim4} with $V$ replaced by $V_j$ and passing to the limit  thanks to the first condition in \eqref{calim4}.

\end{document}